\newcommand\nthalias[1]{\AddToHook{env/#1/begin}{\crefalias{lemma}{#1}}}
\crefname{section}{Section}{Sections}
\crefname{subsection}{\S}{\S\S}
\crefname{subsubsection}{\S}{\S\S}
\theoremstyle{plain}
\newtheorem{lemma}{Lemma}[section]
\newtheorem{proposition}[lemma]{Proposition}
\newtheorem{corollary}[lemma]{Corollary}
\newtheorem{theorem}[lemma]{Theorem}
\theoremstyle{plain}
\newtheorem{theoremN}{Theorem}
\theoremstyle{plain}
\newtheorem{definition}[lemma]{Definition}
\newtheorem{example}[lemma]{Example}
\newtheorem{remark}[lemma]{Remark}
\newtheorem{remarks}[lemma]{Remarks}
\crefname{definition}{definition}{definitions}
\crefname{ex}{example}{examples}
\crefname{exs}{example}{examples}
\crefname{remark}{remark}{remarks}
\crefname{remarks}{remark}{remarks}
\crefname{convention}{convention}{conventions}
\crefname{notation}{notation}{notations}
\crefname{table}{table}{tables}
\crefname{lemma}{lemma}{lemmas}
\crefname{proposition}{proposition}{propositions}
\crefname{propositionN}{proposition}{propositions}
\crefname{corollary}{corollary}{corollaries}
\crefname{corollaryN}{corollary}{corollaries}
\crefname{theorem}{theorem}{theorems}
\crefname{theoremN}{theorem}{theorems}
\crefname{enumi}{}{}
\crefname{assumption}{assumption}{Assumptions}
\crefname{construction}{construction}{Constructions}
\crefname{equation}{}{}
\numberwithin{equation}{section}
\theoremstyle{nonumberplain}
\newtheorem{proof}{Proof}
\newcommand\pf[1]{\newtheorem{#1}{Proof of \Cref{#1}}}
\newcommand\bZ{{\mathbb Z}}
\newcommand\cC{{\mathcal C}}
\newcommand\cM{{\mathcal M}}
\newcommand\cS{{\mathcal S}}
\DeclareMathOperator{\id}{id}
\DeclareMathOperator{\lcm}{lcm}
\DeclareMathOperator{\End}{\mathrm{End}}
\DeclareMathOperator{\spn}{\mathrm{span}}
\newcommand{\cat}[1]{\textsc{#1}}
\newcommand{\qedhere}{\mbox{}\hfill\ensuremath{\blacksquare}}
\newcommand{\xrightarrowdbl}[2][]{%
  \xrightarrow[#1]{#2}\mathrel{\mkern-14mu}\rightarrow
}
\title{Quantum group coproducts and universality under scalar extensions}
\author{Alexandru Chirvasitu}
\begin{document}

\date{}

\newcommand{\Addresses}{{
  \bigskip
  \footnotesize

  \textsc{Department of Mathematics, University at Buffalo}
  \par\nopagebreak
  \textsc{Buffalo, NY 14260-2900, USA}  
  \par\nopagebreak
  \textit{E-mail address}: \texttt{achirvas@buffalo.edu}


}}

\maketitle

\begin{abstract}
  We characterize the families of bialgebras or Hopf algebras over fields for which the product in the corresponding category is finite-dimensional, answering a question of M. Lorenz: if the ground field is infinite then bialgebra or Hopf products are finite-dimensional precisely when the factors are, with at most one of dimension $>1$; over finite fields the necessary and sufficient condition is instead that factors be finite-dimensional with at most finitely many of dimension $>1$; finally, these statements hold for coalgebras as well, provided the family is finite. We also characterize (a) finite field extensions as precisely those whose underlying scalar extension functor preserves coalgebra, or bialgebra, or Hopf algebra products (correcting an error in the literature); (b) algebraic field extensions as those along which finite coalgebra (bialgebra, Hopf algebra) products are preserved; and (c) again algebraic field extensions as precisely those which intertwine cofree coalgebras on vector spaces, or cofree bialgebras (Hopf algebras) on algebras. 
\end{abstract}

\noindent {\em Key words:
  Hopf algebra;
  Tannaka reconstruction;
  adjoint functor;
  affine algebra;
  algebraic extension;
  bialgebra;
  coalgebra;
  cofree;
  comonadic;
  coproduct;
  coradical;
  cosemisimple;
  finite dual;
  free;
  geometric quotient;
  locally presentable category;
  monadic;
  product;
  pseudo-pullback;
  sink
  
}

\vspace{.5cm}

\noindent{MSC 2020: 16T05; 16T15; 18A30; 12F05; 14L30; 20G15; 16T20; 18N10; 18C15; 18C20}

\tableofcontents

\section*{Introduction}

The present paper is in part motivated by the natural question \cite{lor_fdhprod} of when and how often products $\displaystyle \prod^{\cat{HAlg}}_i H_i$ in the category $\tensor*[_{\Bbbk}]{\cat{HAlg}}{}$ of Hopf algebras over a field $\Bbbk$ are finite-dimensional. The quantum groups in the title are those attached to the Hopf algebras in question {\it contra}variantly, as common in non-commutative geometry (e.g. \cite[Chapters 7 and 13]{cp_qg}); it is in this sense that we are here concerned with {\it co}products: dual counterparts to Hopf (and more generally, bialgebra or coalgebra) products. 

That the categories $\tensor*[_{\Bbbk}]{\cat{HAlg}}{}$ (along with those of bialgebras and coalgebras, $\tensor*[_{\Bbbk}]{\cat{BiAlg}}{}$ and $\tensor*[_{\Bbbk}]{\cat{Coalg}}{}$) admit arbitrary (co)limits and afford a wealth of universal constructions ((co)free objects, etc.) is well familiar, and the literature on category-theoretic aspects of the subject abounds: \cite{agore_mono,agore_hopf,zbMATH05696924,MR4712418,zbMATH07854093,zbMATH05312006,zbMATH06696042,zbMATH06696043,zbMATH03344702}, \cite[\S\S 6.4, 12.2 and so on]{swe}, \cite[\S 1.6]{dnr}, etc. etc. All categories mentioned are in fact (e.g. by \cite[Lemmas 1 and 2 and Theorem 6]{zbMATH06696043})  {\it locally presentable} \cite[Definition 1.17]{ar} and are thus amenable to various adjoint functor theorems \cite[\S\S 0.7 and 1.66]{ar}. 

Devoted to variations on that initial motivating question, \Cref{se:smallprod} confirms (in \Cref{th:infdimprod}, \Cref{cor:finfld} and \Cref{th:coalgfdprod}) that it is only in the self-evident, trivial cases that a coalgebra (bialgebra, Hopf algebra) product can be small. The focus is specifically on the three categories $\cat{Coalg}$, $\cat{BiAlg}$ and $\cat{HAlg}$ for definiteness: the reader can easily supply analogues for Hopf algebras with bijective antipode and cousins (e.g. Hopf algebras with antipode $S$ satisfying $S^{2d}=\id$ for some fixed $d$). 

\begin{theoremN}\label{thn:fdprod}
  Let $\Bbbk$ be a field. 
  \begin{enumerate}[(1),wide]
  \item If $\Bbbk$ is infinite, a product $\prod_i H_i$ in the category of $\Bbbk$-bialgebras or Hopf $\Bbbk$-algebras is finite-dimensional if and only if all $H_i$ are, with at most one of dimension $>1$.

  \item If $\Bbbk$ is finite, such a product is infinite-dimensional all $H_i$ are, with only finitely many of dimension $>1$.

  \item All of the above applies to finite families of $\Bbbk$-coalgebras.  \qedhere
  \end{enumerate}
\end{theoremN}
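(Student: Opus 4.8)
The plan is to reduce all three cases to a single structural model for the product and then read off its dimension from the representation theory of a free product. I would first record two reductions. Because the forgetful functor $\cat{BiAlg}\to\cat{Coalg}$ is monadic (the free monoid $T(C)=\bigoplus_{n}C^{\otimes n}$ supplies the left adjoint) and $\cat{HAlg}$ is reflective in $\cat{BiAlg}$, limits are created at the coalgebra level, so the underlying coalgebra of $\prod_i H_i$ is the categorical product of the underlying coalgebras $C_i$; this is what lets a coalgebraic computation drive the bialgebra and Hopf statements. Second, for the implication ``$\prod_i H_i$ finite-dimensional $\Rightarrow$ each $H_i$ finite-dimensional'' I would split the projection $\pi_{i_0}\colon\prod_i H_i\to H_{i_0}$ by the map whose $i_0$-component is the identity and whose remaining components are $H_{i_0}\xrightarrow{\varepsilon}\Bbbk\xrightarrow{u}H_j$; then $\pi_{i_0}$ is a split epimorphism and $\dim H_{i_0}\le\dim\prod_i H_i$.

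For the core I would produce an explicit model. The finite-dual functor $(-)^{\circ}$ is right adjoint to linear duality $\cat{Coalg}\to\cat{Alg}^{\mathrm{op}}$, hence preserves limits; for a finite family of finite-dimensional coalgebras this gives $\prod_i C_i\cong(\ast_i C_i^{*})^{\circ}$, the finite dual of the free product of the dual algebras (with the evident bialgebra/Hopf variants). The dimension of this coalgebra is governed by the finite-dimensional representation theory of $A:=\ast_i C_i^{*}$: its coradical is the sum of the coefficient coalgebras of the simple $A$-modules, and $\prod_i C_i$ is finite-dimensional exactly when $A$ has a largest finite-dimensional quotient. I would use this to prove necessity of the counting conditions uniformly: the functors $\mathrm{Prim}$ and $G(-)$ are representable, so $\mathrm{Prim}(\prod_i H_i)=\prod_i\mathrm{Prim}(H_i)$ and $G(\prod_i H_i)=\prod_i G(H_i)$, and since each factor of dimension $>1$ contributes a nonzero primitive, a nontrivial group-like, or a higher-dimensional simple subcoalgebra, infinitely many such factors force one of these product invariants to be infinite-dimensional. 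Over an infinite field I would upgrade ``finitely many'' to ``at most one'': the free product of two nontrivial finite-dimensional algebras carries a one-parameter family of pairwise non-isomorphic two-dimensional simple modules (a pencil of pairs of idempotents, or of square-zero elements in the connected case), so its coradical, and with it $\prod_i H_i$, is infinite-dimensional.

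The sufficiency statements hinge on two innocuous-looking but load-bearing points, and the finite-field case is where I expect the genuine difficulty. When at most one factor is nontrivial the others are terminal ($\Bbbk$ is terminal in all three categories), so $\prod_i H_i$ collapses to the single nontrivial factor; this settles the infinite-field sufficiency of \Cref{th:infdimprod}. The finite-field statement \Cref{cor:finfld} is the crux and the main obstacle. Over $\Bbbk=\mathbb F_q$ every finite-dimensional algebra is a finite set, which kills the continuous moduli of simples used above; the delicate point is that the free-product model naively still produces arbitrarily large finite quotients, so the claimed finiteness must be extracted by a careful count of simple comodules dimension-by-dimension together with a stabilization argument for the coradical filtration of $(\ast_i C_i^{*})^{\circ}$. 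Reconciling this finiteness with the coalgebra-level model is exactly the subtle step I would budget the most care for.

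Finally, for \Cref{th:coalgfdprod} I would run the $(\ast_i C_i^{*})^{\circ}$ analysis directly. Here no group-like is available to control infinite families, which is why the statement is restricted to finite families; the pseudo-pullback (sink) presentation of coalgebra products makes this restriction precise and identifies the relevant $A$ with the honest coproduct of the finitely many dual algebras, so that the two-nontrivial-factors obstruction of the second paragraph applies over an arbitrary field and yields the ``at most one of dimension $>1$'' threshold.
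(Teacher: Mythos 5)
Your framing agrees with the paper's on several points: limits of bialgebras and Hopf algebras are indeed computed at the coalgebra level (\Cref{th:psdpull}), your splitting of the projections through the zero object $\Bbbk$ is exactly \Cref{le:binfamenough}\Cref{item:le:binfamenough:bialg}, and your model $\prod_i C_i\cong\bigl(\coprod_i C_i^*\bigr)^{\circ}$ for finite-dimensional factors is the paper's \Cref{eq:prodfdcoalg}. The genuine gap is in your core engine: it is false that the free product of two nontrivial finite-dimensional algebras carries a one-parameter family of pairwise non-isomorphic \emph{two-dimensional} simple modules. If $C_\alpha=C_\beta=M_3(\Bbbk)^*$ (a legitimate instance of part (3)), then $A=B=M_3(\Bbbk)$ and every nonzero $A\ast B$-module has dimension divisible by $3$, so there are no two-dimensional modules whatsoever; for a Hopf-algebra instance take $H_\alpha=H_\beta=\bC^{A_5}$, whose dual $\bC[A_5]$ has a unique one-dimensional module and no faithful two-dimensional one, so every two-dimensional module over the free product is trivial, hence non-simple. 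Your fallback dichotomy fails too: $C^*$ can be a division algebra (e.g. $C=\Bbbk'^*$ for a proper finite field extension, as in \Cref{ex:dualfields}), containing neither nontrivial idempotents nor square-zero elements. This is precisely why the paper's proof of \Cref{th:infdimprod} takes the shape it does: one picks simple comodules $V_i$ of dimension $d_i$ over the factors, builds candidate simples of dimension $d=\lcm(d_\alpha,d_\beta)$ (with separate treatment of pointed factors), and then---because a positive-dimensional family need not have infinitely many rational points over an arbitrary infinite field---invokes Rosenlicht's generic quotient and unirationality to convert the dimension estimate \Cref{eq:crudeest} on the double-coset space \Cref{eq:2coset} into infinitely many isomorphism classes. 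Relatedly, your necessity argument via representability of the functors of primitives and group-likes does not cover factors such as $\bC^{A_5}$, which have neither, and for your third alternative (``a higher-dimensional simple subcoalgebra'') no product-preservation argument is available; for coalgebras, infinitely many such factors can even force the product to \emph{vanish} (\Cref{res:notcoalgs}\Cref{item:res:notcoalgs:notcoalg}).

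Your plan for the finite-field case, moreover, aims at the wrong target. The paper's \Cref{cor:finfld} shows that over a finite field, finite-dimensionality of the product is \emph{still} equivalent to at most one factor having dimension $>1$: two nontrivial factors force infinite dimension even over $\bF_q$ (the product of two copies of $\bF_q^{\bZ/2}$ contains the representative functions of every finite dihedral quotient of $\bZ/2\ast\bZ/2$). What relaxes over finite fields is only the dimension-by-dimension counting condition \Cref{item:th:infdimprod:strongfinsimpl}, which becomes strictly weaker than finite-dimensionality (the garbled phrasing of part (2) of the statement notwithstanding, the paper's \Cref{cor:finfld} is unambiguous on this). So the ``stabilization argument for the coradical filtration'' to which you would devote the most care would be an attempt to prove something false. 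The paper's actual finite-field step is a short descent: extend scalars to the (infinite) algebraic closure, apply \Cref{th:infdimprod} there, and use the fact that every simple $C_{\ol{\Bbbk}}$-comodule occurs in a subquotient of $W_{\ol{\Bbbk}}$ for some finite-dimensional $W\in\cM^{C}$ to conclude that infinitely many pairwise non-isomorphic comodules, hence infinitely many simples, already exist over $\Bbbk$.
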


In part prompted by the natural need to adjust the ground field in addressing \Cref{thn:fdprod}, and in part in order to correct an error (of the author's) in the literature, \Cref{se:fldext} turns to permanence properties for universal constructions (products, cofree objects) in categories of coalgebras or bialgebras under extending scalars via the functor $(-)_{\Bbbk'}:=-\otimes_{\Bbbk}\Bbbk'$ attached to a field extension $\Bbbk\le \Bbbk'$. 

The mistake alluded to above is the claim \cite[Theorem 4.1(2)]{MR4712418} that $(-)_{\Bbbk'}$ preserves arbitrary products of coalgebras, or bialgebras, or Hopf algebras: Examples \ref{ex:scalewrong} and \ref{ex:dualfields} each give instances of failure (or rather a families of them). Allowing for variations in taste, the corrected statement(s) may well be more interesting:
\begin{itemize}
\item that specific permanence property in fact characterizes the {\it finite} field extensions (\Cref{th:finextradj});

\item and furthermore, there is an analogue characterizing {\it algebraic} extensions by relaxing the product-preservation property appropriately (\Cref{th:algextradj}). 
\end{itemize}

\begin{theoremN}\label{thn:finextradj}
  \begin{enumerate}[(1),wide]
  \item A field extension $\Bbbk\le \Bbbk'$ is finite if and only if the corresponding scalar extension functor $(-)_{\Bbbk'}$
    \begin{itemize}[wide]
    \item is a right adjoint between the corresponding categories of coalgebras (equivalently, bialgebras or Hopf algebras);

    \item or preserves products in $\cat{Coalg}$ (again equivalently, in $\cat{BiAlg}$ or $\cat{HAlg}$).
    \end{itemize}

  \item Similarly, a field extension $\Bbbk\le \Bbbk'$ is finite if and only if the corresponding scalar extension functor $(-)_{\Bbbk'}$ preserves finite limits (equivalently, finite products) in any one of the categories $\cat{Coalg}$, $\cat{BiAlg}$ or $\cat{HAlg}$.  \qedhere
  \end{enumerate}  
\end{theoremN}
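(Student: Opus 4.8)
The plan is to collapse the whole biconditional onto two endpoints, each established first in $\cat{Coalg}$ and then transported to $\cat{BiAlg}$ and $\cat{HAlg}$: namely (I) if $\Bbbk\le\Bbbk'$ is finite then $(-)_{\Bbbk'}$ preserves all small limits, and (II) if $(-)_{\Bbbk'}$ preserves \emph{binary} products then $\Bbbk\le\Bbbk'$ is finite. I would first record that $(-)_{\Bbbk'}$ always preserves colimits: the forgetful functors to vector spaces create colimits, $(-)_{\Bbbk'}$ commutes with them, and $-\otimes_\Bbbk\Bbbk'$ is cocontinuous; in particular $(-)_{\Bbbk'}$ is accessible, so by the adjoint functor theorem for locally presentable categories \cite[\S1.66]{ar} it is a right adjoint if and only if it preserves limits. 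Granting (I) and (II), the cycle finite $\Rightarrow$ preserves all limits $\Rightarrow$ right adjoint $\Rightarrow$ preserves products $\Rightarrow$ preserves binary products $\Rightarrow$ finite closes, with the finite-limit and finite-product clauses of part~(2) sitting between ``preserves all limits'' and ``preserves binary products''; both parts then follow at once.

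Both endpoints reduce to a single question about the Sweedler dual. For finite-dimensional $\Bbbk$-coalgebras $C_i$ with dual algebras $C_i^{*}$, the universal property of the free product together with the finite-dual adjunction $\Hom_{\cat{Coalg}}(E,R^{\circ})\cong\Hom_{\cat{Alg}}(R,E^{*})$ identifies the categorical product with a Sweedler dual, $\prod_i C_i\cong\big(\ast_i\, C_i^{*}\big)^{\circ}$; and since free products and finite-dimensional linear duals commute with scalar extension, preservation of such a product is \emph{equivalent} to the comparison map $R^{\circ}\otimes_\Bbbk\Bbbk'\to (R\otimes_\Bbbk\Bbbk')^{\circ}$ being an isomorphism for $R=\ast_i\,C_i^{*}$. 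Thus everything turns on the fact that $(-)^{\circ}$ commutes with $-\otimes_\Bbbk\Bbbk'$ for all $\Bbbk$-algebras if and only if $\Bbbk'$ is finitely presented, i.e. finite-dimensional, over $\Bbbk$.

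The content of (II) is the ``only if'' half, in its sharp binary form: already a single product $C\times^{\cat{Coalg}}D\cong(C^{*}*D^{*})^{\circ}$ of two finite-dimensional coalgebras is the Sweedler dual of the \emph{infinite}-dimensional algebra $C^{*}*D^{*}$, so its preservation forces the comparison map to be surjective for that one algebra. I would exhibit $C,D$ for which surjectivity fails as soon as $[\Bbbk':\Bbbk]=\infty$ — conceptually, $(-)^{\circ}$ probes arbitrary linear functionals and cofinite-dimensional ideals, which $-\otimes_\Bbbk\Bbbk'$ fails to exhaust over an infinite-dimensional $\Bbbk'$. This is exactly the mechanism of Examples~\ref{ex:scalewrong} and~\ref{ex:dualfields}, and it is the main obstacle: producing the explicit pair (equivalently, the explicit $R$) witnessing the failure and checking that it detects infiniteness. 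The payoff is the strongest clause of the theorem, that a single binary product already suffices.

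For (I) I would run the same identification in reverse: when $[\Bbbk':\Bbbk]=n<\infty$ the Sweedler dual commutes with $-\otimes_\Bbbk\Bbbk'$ (the ``if'' half above), so products of finite-dimensional coalgebras are preserved, and arbitrary products and factors reduce to this case since every coalgebra is the filtered union of its finite-dimensional subcoalgebras and $(-)_{\Bbbk'}$ preserves filtered colimits. Equalizers need a separate argument: the equalizer of $f,g\colon C\rightrightarrows D$ is the largest subcoalgebra of $C$ inside $\ker(f-g)$, and I would show this coreflection commutes with extension, using flatness of $\Bbbk'$ for the subspace $\ker(f-g)$ together with a finite-basis descent (where $n<\infty$ is essential) to match the largest subcoalgebras; here the supporting fact that $-\otimes_\Bbbk\Bbbk'$ preserves all vector-space limits follows since $\mathrm{Res}$ is continuous and conservative with $\mathrm{Res}\circ(-)_{\Bbbk'}\cong(-)^{\oplus n}$. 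Products and equalizers together give all small limits, hence a right adjoint by the theorem above. Finally, the ``equivalently bialgebras or Hopf algebras'' clauses come by transporting both endpoints along the free/cofree and forgetful functors linking $\cat{BiAlg}$ and $\cat{HAlg}$ to $\cat{Coalg}$, which $(-)_{\Bbbk'}$ intertwines: (I) descends, and for (II) the offending product is promoted to one of free bialgebras (resp. Hopf algebras), reproducing the same Sweedler-dual obstruction. As finiteness of $\Bbbk\le\Bbbk'$ is the common pivot, the three properties are thereby equivalent to one another and to $[\Bbbk':\Bbbk]<\infty$.
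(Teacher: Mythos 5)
Your proposal founders on its endpoint (II): preservation of \emph{binary} products does not detect finiteness of the extension, and no choice of finite-dimensional $C,D$ can witness what you want. Your own reduction shows why. For finite-dimensional $C,D$ one indeed has $C\times^{\cat{Coalg}}D\cong \left(C^*\coprod D^*\right)^{\circ}$, but the algebra $C^*\coprod D^*$ is \emph{affine} (generated by the two finite-dimensional subspaces $C^*$ and $D^*$), and for affine algebras the comparison map $(R^{\circ})_{\Bbbk'}\to (R_{\Bbbk'})^{\circ}$ is an isomorphism for every \emph{algebraic} extension, finite or not: this is \Cref{le:presfindual} (in a finite-dimensional $\Bbbk'$-algebra, a finite set generates a finite-dimensional $\Bbbk$-algebra, because the finitely many structure constants needed lie in a finite sub-extension). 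Consequently every infinite algebraic extension, e.g.\ $\bQ\le\overline{\bQ}$, preserves binary --- indeed all finite --- products in $\cat{Coalg}$, $\cat{BiAlg}$ and $\cat{HAlg}$; this is exactly \Cref{th:algextradj}. So the implication ``preserves binary products $\Rightarrow$ finite'' is false, the cycle of implications you propose cannot close, and your pivotal claimed fact (``$(-)^{\circ}$ commutes with $-\otimes_{\Bbbk}\Bbbk'$ for all $\Bbbk$-algebras iff the extension is finite'') is beside the point: finite products of finite-dimensional coalgebras only probe \emph{affine} algebras, where the operative dichotomy is algebraic versus transcendental. Detecting finiteness genuinely requires \emph{infinite} products; the paper's proof of \Cref{th:finextradj} uses the $\aleph_0$-power of $H_{\cat{cf}}(\Bbbk)$, the obstruction being that $\left(\Bbbk^S\right)_{\Bbbk'}\to\left(\Bbbk'\right)^S$ is not onto for infinite $S$ and infinite $\Bbbk\le\Bbbk'$ (the mechanism of \Cref{ex:scalewrong} and \Cref{le:hcfsurj}, which visibly needs infinite families). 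Note, relatedly, that part (2) of the statement as printed is itself inconsistent with the body of the paper: \Cref{th:algextradj} proves that preservation of finite limits characterizes \emph{algebraic} extensions, so ``finite'' there should read ``algebraic''; your proposal, taking the printed wording at face value, attempts to collapse the finite-product and arbitrary-product conditions into one equivalence, which is precisely what the mathematics forbids.

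There are also two lesser gaps in endpoint (I). First, passing from products of finite-dimensional coalgebras to arbitrary products ``by filtered colimits'' fails for infinite index sets, since infinite products do not commute with filtered colimits (the interchange in \cite[Proposition 1.59]{ar} is only for \emph{finite} limits --- that is exactly why this argument proves \Cref{th:algextradj} but not \Cref{th:finextradj}); the paper instead realizes $\prod_i C_i$ inside the cofree coalgebra $C_{\cat{cf}}\left(\prod^{\cat{Vect}}_i C_i\right)$ as an intersection of equalizers and checks that each stage --- vector-space product, cofree coalgebra (\Cref{pr:finextcofree}), equalizers --- is preserved under finite extensions. Second, your finite-basis descent for equalizers is unnecessary: equalizers, equivalently largest subcoalgebras contained in a subspace, are preserved by \emph{arbitrary} field extensions, as the paper notes by citing \cite[Lemma 4.2]{MR4712418}; this is harmless but obscures where finiteness is actually used, namely only in the two product-related steps above.
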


That discussion in turn generates a bit of downstream momentum: it is clear from the above statement that, in analyzing ways in which $(-)_{\Bbbk'}$ might fail to preserve various universal constructions, the distinction between algebraic and transcendental field extensions $\Bbbk\le \Bbbk'$ is crucial. The following characterization (\Cref{th:extprescf} below) of algebraic extensions is an upshot of that analysis, and in the same spirit as \Cref{thn:finextradj}. 

\begin{theoremN}\label{thn:extprescf}
  A field extension $\Bbbk\le \Bbbk'$ is algebraic if and only if the corresponding scalar extension functor $(-)_{\Bbbk'}$
  \begin{itemize}[wide]
  \item preserves cofree coalgebras on arbitrary vector spaces or equivalently, cofree bialgebras (or Hopf algebras) on arbitrary algebras;
  \item equivalently, any of the above for only the 1-dimensional vector spaces or algebras.  \qedhere
  \end{itemize}
\end{theoremN}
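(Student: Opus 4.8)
The plan is to phrase preservation in terms of a single canonical comparison morphism and then reduce everything to one explicit computation. Writing $U_\Bbbk\colon\cat{Coalg}_\Bbbk\to\cat{Vec}_\Bbbk$ for the forgetful functor to $\Bbbk$-vector spaces and $\mathrm{Cof}_\Bbbk$ for its right adjoint (the cofree coalgebra functor), the base-change functors commute strictly with the forgetful functors, so taking mates yields a natural morphism $\theta_V\colon(\mathrm{Cof}_\Bbbk(V))_{\Bbbk'}\to\mathrm{Cof}_{\Bbbk'}(V_{\Bbbk'})$; that $(-)_{\Bbbk'}$ \emph{preserves} the cofree coalgebra on $V$ means precisely that $\theta_V$ is an isomorphism. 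For finite-dimensional $V$ the adjunction between free (tensor) algebras and finite duals identifies $\mathrm{Cof}_\Bbbk(V)\cong(TV^{*})^{\circ}$, and under this identification $\theta_V$ becomes the canonical finite-dual base-change map $(A^{\circ})_{\Bbbk'}\to(A_{\Bbbk'})^{\circ}$ for $A=TV^{*}$, which is injective since $\Bbbk'$ is flat over $\Bbbk$ and $A^{\circ}\subseteq A^{*}$; for the line $V=\Bbbk$ this reads $\mathrm{Cof}_\Bbbk(\Bbbk)\cong\Bbbk[x]^{\circ}$. The logical skeleton is then that preservation on all vector spaces trivially implies preservation on the line, preservation on the line implies $\Bbbk\le\Bbbk'$ is algebraic, and algebraicity implies preservation on all vector spaces; only the last two implications carry content, and together they close the circle and identify ``arbitrary'' with ``one-dimensional''. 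Because $\theta_V$ is always injective, each implication is just a surjectivity statement.

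For the reverse implication I would argue by contraposition using $A=\Bbbk[x]$. A finite-codimensional ideal of $\Bbbk'[x]$ is $(q)$ for some nonzero $q\in\Bbbk'[x]$, and the image of $\theta_\Bbbk$ consists of the functionals vanishing on some extended ideal $(p)\Bbbk'[x]$ with $0\ne p\in\Bbbk[x]$; thus $\theta_\Bbbk$ is surjective if and only if every nonzero $q\in\Bbbk'[x]$ divides some nonzero $p\in\Bbbk[x]$, equivalently every root in $\overline{\Bbbk'}$ of such a $q$ is algebraic over $\Bbbk$. Now if $t\in\Bbbk'$ is transcendental over $\Bbbk$, the evaluation character $\epsilon_t\colon\Bbbk'[x]\to\Bbbk'$, $f\mapsto f(t)$, is a grouplike element of $\Bbbk'[x]^{\circ}$ that does not vanish on any $(p)\Bbbk'[x]$ with $0\ne p\in\Bbbk[x]$ (since $p(t)\ne0$), so $\epsilon_t$ lies outside the image of $\theta_\Bbbk$. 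Hence preservation on the line forces $\Bbbk\le\Bbbk'$ to be algebraic.

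For the forward implication I would first reduce to finite-dimensional $V$: every coalgebra is the filtered union of its finite-dimensional subcoalgebras, and a short argument with the universal map $\mathrm{Cof}_\Bbbk(V)\to V$ shows $\mathrm{Cof}_\Bbbk(V)=\varinjlim_\alpha\mathrm{Cof}_\Bbbk(V_\alpha)$ over the finite-dimensional subspaces $V_\alpha\subseteq V$; since extension of scalars commutes with filtered colimits, $\theta_V=\varinjlim_\alpha\theta_{V_\alpha}$, and it suffices to show that $(A^{\circ})_{\Bbbk'}\to(A_{\Bbbk'})^{\circ}$ is surjective for $A=TV^{*}$ when $\Bbbk\le\Bbbk'$ is algebraic. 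Given a finite-codimensional two-sided ideal $J\trianglelefteq A_{\Bbbk'}$, I would fix a $\Bbbk'$-basis of the finite-dimensional algebra $A_{\Bbbk'}/J$ and let $\Bbbk_0\subseteq\Bbbk'$ be the subextension of $\Bbbk$ generated by the finitely many structure constants together with the coordinates of the images of the generators of $A$; algebraicity makes $\Bbbk_0/\Bbbk$ finite. Then $A_{\Bbbk'}/J$ descends to a finite-dimensional $\Bbbk_0$-algebra $\bar A_0$ through which $A\to A_{\Bbbk'}/J$ factors, so $I:=\ker(A\to\bar A_0)$ is finite-codimensional in $A$ with $I_{\Bbbk'}\subseteq J$, exhibiting the extended ideals as cofinal and giving surjectivity. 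I expect this cofinality step---controlling an arbitrary finite-codimensional ideal of the (noncommutative) base-changed free algebra by an extended one, which is exactly where a transcendental element of $\Bbbk'$ would obstruct descent---to be the main obstacle.

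Finally, the passage to bialgebras and Hopf algebras I would handle by transporting the statement along the forgetful functors to $\cat{Alg}$, in the same spirit as the category-independence of \Cref{th:finextradj}: the cofree bialgebra (respectively Hopf algebra) on an algebra is assembled functorially from the cofree-coalgebra functor, base change commutes with all the structure functors involved, and the resulting comparison morphism is an isomorphism precisely when the underlying coalgebra comparison $\theta$ is. The remaining verification is that these comparison morphisms correspond under the forgetful functors---so that the one-dimensional algebra $\Bbbk$ detects algebraicity on the bialgebra and Hopf sides just as the line does on the coalgebra side---and I expect this bookkeeping to parallel the product-preservation argument already used for \Cref{th:finextradj}.
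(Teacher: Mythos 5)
Your coalgebra argument is correct and is essentially the paper's own: the filtered-colimit reduction to finite-dimensional $V$ is \Cref{le:cfbuildup}, your descent of a cofinite ideal of $A_{\Bbbk'}$ to a finite subextension generated by structure constants and coordinates is the proof of \Cref{item:le:presfindual:alg} $\Rightarrow$ \Cref{item:le:presfindual:aff} in \Cref{le:presfindual}, and your transcendental evaluation character $\epsilon_t$ is exactly the paper's proof of \Cref{item:le:presfindual:kx} $\Rightarrow$ \Cref{item:le:presfindual:alg}. The bialgebra clause also transports as you say, because $B_{\cat{cf}}(A)$ is literally $C_{\cat{cf}}(A)$ with a lifted algebra structure, so the two comparison maps coincide by the universal property.

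The genuine gap is your last paragraph: the Hopf algebra clauses. The cofree Hopf algebra on an algebra is \emph{not} assembled from $C_{\cat{cf}}$ by constructions commuting with base change: per the description recalled in \Cref{pr:finextcofree}\Cref{item:pr:finextcofree:h2b}, $H_{\cat{cf}}(A)$ is cut out of the \emph{infinite} bialgebra product $\prod_{n\in\bZ_{\ge 0}}B_n$ (alternating copies of $B_{\cat{cf}}(A)$ and $B_{\cat{cf}}(A)^{op,cop}$), and by \Cref{th:finextradj} such infinite products are preserved only by \emph{finite} extensions. So for an infinite algebraic extension such as $\bQ\le \overline{\bQ}$, your assertion that ``base change commutes with all the structure functors involved'' fails at precisely this step; nor is the underlying coalgebra of $H_{\cat{cf}}(A)$ equal to $C_{\cat{cf}}(A)$, so there is no formal transport of your comparison map $\theta$ to the Hopf setting. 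This is where the paper does its real work: the implication \Cref{item:th:extprescf:alg} $\Rightarrow$ \Cref{item:th:extprescf:h2alg} is proved by writing an arbitrary Hopf $\Bbbk'$-algebra as a colimit of S-affine Hopf subalgebras, descending those to finite subextensions of $\Bbbk$, and invoking \Cref{le:factthroughfinext} together with \Cref{pr:finextcofree}\Cref{item:pr:finextcofree:h2a}. Separately, your detection claim on the Hopf side needs to know what $H_{\cat{cf}}(\Bbbk)$ \emph{is}, which your proposal never supplies: the paper identifies $H_{\cat{cf}}(\Bbbk)\cong \Bbbk[x^{\pm 1}]^{\circ}$ (\Cref{pr:hcffdalg}, \Cref{res:1varrec}\Cref{item:res:1varrec:s1}) and then reruns the transcendental-character argument with an \emph{invertible} transcendental $t$. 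Without these two ingredients the Hopf parts of the statement remain unproven.
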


\subsection*{Acknowledgements}

I am grateful for insightful comments, questions and suggestions from A. Agore, M. Lorenz and G. Militaru. This work is partially supported by NSF grant DMS-2001128, and is part of the project Graph Algebras partially supported by EU grant HORIZON-MSCA- SE-2021 Project 101086394.


\section{Small coalgebra (bialgebra, Hopf algebra) products}\label{se:smallprod}

All (co)algebras are assumed (co)unital, along with their respective morphisms. Everything in sight is linear over a field $\Bbbk$, and we frequently denote scalar extensions along $\Bbbk\to \Bbbk'$ by $(-)_{\Bbbk'}$ (mostly fields, but the notation applies generally, for ring morphisms). For background on coalgebras, bialgebras and Hopf algebras we refer the reader to \cite{dnr,mont,rad,swe}; more specific citations are scattered throughout the text, where needed. 

As is customary in the category-theoretic literature (e.g. \cite[Definition 19.3]{ahs}), `$\top$' symbols occasionally indicate adjunctions by having the narrow end point towards the left adjoint. $\cC(-,-)$ stands for morphisms in the category $\cC$. Some common categories in use below include

\begin{itemize}[wide]
\item $\tensor*[_{\bullet}]{\cM}{}$ and $\cM_{\bullet}$, left and right modules respectively and similarly, $\cM^{\bullet}$ and $\tensor*[^{\bullet}]{\cM}{}$ for comodules;

\item additional `$f$' subscripts indicate finite-dimensional objects therein, as in $\cM^C_f$ for finite-dimensional right $C$-comodules;

\item $\cat{Vect}$, $\cat{Alg}$, $\cat{Coalg}$, $\cat{BiAlg}$ and $\cat{HAlg}$ denote categories of vector spaces, algebras, coalgebras, bialgebras and Hopf algebras respectively, occasionally decorated with the ground field for clarity or emphasis (e.g. $\tensor*[_{\Bbbk}]{\cat{Coalg}}{}$ for $\Bbbk$-coalgebras). 
\end{itemize}


We need some 2-categorical (or bicategorical) background, as the reader can find covered in \cite{fiore_2cat,jy_2dcat,zbMATH05659661} and numerous other sources. In particular, extend the notion of a {\it pseudo-pullback} of \cite[\S 6.10, Example 15]{zbMATH05659661} from pairs of functors to families of co-terminal functors (i.e. {\it sinks} \cite[Definition 10.62]{ahs}). 

\begin{definition}\label{def:plbk}
  A {\it pseudo-pullback} $\cat{pplb}(F_i, i\in I)$ is the {\it pseudo-limit} \cite[\S 6.10]{zbMATH05659661} of the diagram consisting of a family of functors $\cC_i\xrightarrow{F_i}\cC$ (a {\it sink} with {\it codomain} $\cC$ and {\it domain} $(\cC_i)_i$). Concretely, it is the category with
  \begin{itemize}[wide]
  \item objects consisting of tuples $(c_i\in \cC_i,\ i\in I\ ;\ c\in \cC)$ and $\cC$-isomorphisms $F_i c_i\xrightarrow[\cong]{\varphi_i}c$;

  \item and morphisms $(c_i;c;\varphi_i)\to (c'_i;c';\varphi'_i)$ consisting of $\cC_i$-morphisms $c_i\to c'_i$ and a $\cC$-morphism $c\to c'$ making the obvious diagrams (involving $\varphi_i$ and $\varphi'_i$) commute. 
  \end{itemize}
  $\cat{pplb}(F_i)$, in words, is the universal category equipped with functors to the $\cC_i$ and $\cC$ and natural isomorphisms
  \begin{equation*}
    \begin{tikzpicture}[>=stealth,auto,baseline=(current  bounding  box.center)]
      \path[anchor=base] 
      (0,0) node (l) {$\bullet$}
      +(2,.5) node (u) {$\cC_i$}
      +(4,0) node (r) {$\cC$:}
      +(2,.13) node () {\rotatebox[origin=c]{-90}{$\cong$}}
      ;
      \draw[->] (l) to[bend left=6] node[pos=.5,auto] {$\scriptstyle $} (u);
      \draw[->] (u) to[bend left=6] node[pos=.5,auto] {$\scriptstyle F_i$} (r);
      \draw[->] (l) to[bend right=6] node[pos=.5,auto,swap] {$\scriptstyle $} (r);
    \end{tikzpicture}
  \end{equation*}
  the top left-hand arrows are $(c_i;c;\varphi_i)\mapsto c_i$, the bottom arrow selects the $c$ instead, and the natural isomorphisms are what the $\varphi_i$ respectively aggregate to. 
\end{definition}

That in place, note the following representation-theoretic description of the product $\prod_i C_i$ of a family of coalgebras in the category thereof. 

\begin{theorem}\label{th:psdpull}
  Let $(C_i)_{i\in I}$ be a family of coalgebras and $C:=\prod C_i$ the product in the category $\tensor*[_{\Bbbk}]{\cat{Coalg}}{}$.

  \begin{enumerate}[(1),wide]
  \item The corestriction functors $\cM^{C}\to \cM^{C_i}$ induced by the product structure morphisms $C\xrightarrow{\pi_i}C_i$ realize $\cM^C$ as the pseudo-pullback of the forgetful functors $\cM^{C_i}\to \cat{Vec}$.

  \item The same goes for categories $\cM^{\bullet}_f$ of finite-dimensional comodules.

  \item The statements hold also for categories of bialgebras or Hopf algebras.  \qedhere
  \end{enumerate}
\end{theorem}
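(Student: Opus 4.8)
The plan is to prove the finite-dimensional statement (2) first, by hand, and then leverage the fundamental theorem of comodules — every comodule is the filtered union of its finite-dimensional subcomodules, so each $\cM^{C}$ is the $\mathrm{Ind}$-completion of $\cM^{C}_f$ — to bootstrap to the general statement (1). The single organizing principle throughout is to translate comodule structures into coalgebra maps out of comatrix coalgebras and then feed these into the defining universal property of the product $C=\prod_i C_i$, namely that $\cat{Coalg}(D,C)\cong\prod_i\cat{Coalg}(D,C_i)$ naturally in $D$.

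For objects in the finite-dimensional case, I would use that a right $C$-comodule structure on a finite-dimensional $V$ is the same datum as a coalgebra map $\fc(V):=\End(V)^{*}\to C$ from the comatrix coalgebra. The universal property of the product then identifies such maps with families of coalgebra maps $\fc(V)\to C_i$, i.e.\ with families of $C_i$-comodule structures on the common space $V$; and tracing through the identifications, the $i$-th member of the family is exactly the corestriction of the coaction along $\pi_i$. This makes the comparison functor $\cM^C_f\to\cat{pplb}(\cM^{C_i}_f\to\cat{Vec})$ bijective on objects (in particular essentially surjective).

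The heart of the argument is full faithfulness. Given finite-dimensional comodules $V,V'$, put $D:=\mathrm{cf}(V\oplus V')$ for the (finite-dimensional) coefficient subcoalgebra, so that $V,V'$ are $D$-comodules and $C$-comodule maps $V\to V'$ are precisely $D^{*}$-module maps. A linear $f$ is simultaneously a $C_i$-comodule map for every $i$ exactly when it commutes with the image of each $C_i^{*}\to D^{*}$; the crux is that these images \emph{generate} $D^{*}$ as an algebra, so commuting with all of them forces $f$ to be $D^{*}$-linear. The generation claim is where the universal property does its real work: were the images to generate only a proper subalgebra, this would correspond to a proper quotient coalgebra $D\twoheadrightarrow\ol D$ through which every $\pi_i|_D$ factors, whence by universality the inclusion $D\hookrightarrow C$ would itself factor through the surjection $D\twoheadrightarrow\ol D$ — impossible for a monomorphism unless $\ol D=D$. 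I regard this ``generation, not spanning'' point as the subtle step: naively one might hope the $\pi_i$ are jointly injective on finite-dimensional subcoalgebras, and they are \emph{not}, yet the morphism comparison still goes through because intertwining the generators suffices to intertwine the generated algebra.

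That settles (2). For (1), full faithfulness extends verbatim (a comodule map restricts to each finite-dimensional subcomodule, where the above applies), so the only remaining issue is essential surjectivity, and this I expect to be the genuine obstacle. The pseudo-pullback computed naively in $\cat{Cat}$ is too large: it admits families $(V,\rho_i)$ of comodule structures on a common infinite-dimensional space that are not jointly locally finite and therefore assemble to no coaction $V\to V\otimes C$ at all (already visible for two copies of the dual-numbers coalgebra, where $\cM^C$ consists of the locally finite modules over $\langle X,Y\rangle$ with $X^2=Y^2=0$). The reconciliation is to read the pseudo-pullback in the locally presentable (finitely accessible) setting, where it agrees with the $\mathrm{Ind}$-completion of the finite-dimensional pseudo-pullback — equivalently, to identify $\cM^C$ with the full subcategory of locally finite objects — and verifying this compatibility of the pseudo-pullback with filtered colimits is the technical core. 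Finally, (3) is the evident monoidal elaboration: the $\cM^{B_i}$ are monoidal via the bialgebra structure, the product in $\cat{BiAlg}$ (resp.\ $\cat{HAlg}$) corepresents families of bialgebra (resp.\ Hopf) maps, and one reruns the comatrix-and-universal-property and generation arguments with the universal coacting bialgebra on $V$ in place of $\fc(V)$, the forgetful functors carrying the algebra structure along so that the same two mechanisms apply unchanged.
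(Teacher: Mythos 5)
Your proposal is correct in substance but follows a genuinely different route from the paper. For part (2) the paper simply invokes Tannaka biduality in McCrudden's formulation \cite{zbMATH01724903}: $C\mapsto(\cM^C_f\to\cat{Vect}_f)$ is a right biadjoint, hence preserves bicategorical products, which are coalgebra products on one side and pseudo-pullbacks on the other. Your argument replaces that citation with elementary linear algebra: objects via $\cat{Coalg}(\End(V)^*,C)\cong\prod_i\cat{Coalg}(\End(V)^*,C_i)$, morphisms via the coefficient coalgebra $D$ of $V\oplus V'$ together with the claim that the images of the maps $C_i^*\to D^*$ generate $D^*$. That generation claim is argued correctly: if the generated subalgebra $A\le D^*$ were proper, then $\ker\left(D\twoheadrightarrow A^*\right)=A^\perp\subseteq\ker\left(\pi_i|_D\right)$ for every $i$, so each $\pi_i|_D$ factors through the proper coalgebra quotient $A^*$, and the uniqueness clause in the universal property of $C=\prod_iC_i$ forces the injection $D\hookrightarrow C$ to factor through $D\twoheadrightarrow A^*$, a contradiction. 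What your route buys is self-containedness; what the paper's buys is brevity. For part (3) the paper is slicker than you: the forgetful functors $\cat{HAlg}\to\cat{BiAlg}\to\cat{Coalg}$ are monadic, hence continuous, so bialgebra and Hopf products are computed on underlying coalgebras and (3) reduces at once to (1)--(2). Your monoidal rerun can be made to work, but the ``universal coacting bialgebra on $V$'' needs to be the free bialgebra $T(\End(V)^*)$ (resp.\ Takeuchi's free Hopf algebra \cite{zbMATH03344702}) on the comatrix coalgebra, whose bialgebra (Hopf) morphisms into $B$ classify $B$-comodule structures on $V$; as written this is the vaguest point of your proposal, and the paper's reduction is preferable.

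Your treatment of part (1) deserves separate mention, because there you are more careful than the paper. You are right that the literal pseudo-pullback of \Cref{def:plbk}, applied to the full comodule categories, is strictly larger than $\cM^C$: already for $C_1=C_2=(\Bbbk[X]/(X^2))^*$ it is the category of all $\Bbbk\langle X,Y\rangle/(X^2,Y^2)$-modules, while $\cM^C$ consists only of the locally finite ones; for the families of \Cref{res:notcoalgs}\Cref{item:res:notcoalgs:notcoalg} the discrepancy is starker still ($\cM^C$ trivial, the pseudo-pullback not). The paper's one-sentence proof of (1) --- realize $\cM^C$ as a cocompletion of $\cM^C_f$ --- tacitly assumes that pseudo-pullbacks commute with that cocompletion, which is exactly what fails; the correct statement is the one you give (the $\mathrm{Ind}$-completion of the finite-dimensional pseudo-pullback, equivalently the jointly locally finite objects), and it is the version the paper itself actually uses afterwards: the second bullet point of \Cref{def:famcomod} is precisely your local-finiteness condition, even though it is not part of the pseudo-pullback as defined. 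So the ``genuine obstacle'' you flag is a gap in the paper's statement and proof of (1), not in your proposal, and your fix is the right one.
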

\begin{proof}
  This follows straightforwardly from standard {\it Tannaka reconstruction} \cite[Theorem 2.1.12 and Lemma 2.2.1]{schau_tann} of coalgebras and morphisms from their respective categories of comodules. In one formulation \cite[Proposition 3.3]{zbMATH01724903} particularly convenient here,
  \begin{equation*}
    \cat{Coalg}
    \ni
    C
    \xmapsto{\quad\quad}
    \left(
      \cM^C_f
      \xrightarrow{\quad\text{forgetful functor }\cat{fgt}\quad}
      \cat{Vect}_f
    \right)
  \end{equation*}
  is a right {\it biadjoint} \cite[Definition 9.8]{fiore_2cat} from
  \begin{itemize}[wide]
  \item the category of coalgebras, enhanced to a bicategory by equipping it with 2-morphisms    
    \begin{equation*}
      \begin{tikzpicture}[>=stealth,auto,baseline=(current  bounding  box.center)]
        \path[anchor=base] 
        (0,0) node (l) {$C$}
        +(4,0) node (r) {$D$}
        +(2,0) node () {\rotatebox[origin=c]{-90}{$\Rightarrow$}}
        +(2.3,0) node () {$\psi$}
        ;
        \draw[->] (l) to[bend left=20] node[pos=.5,auto] {$\scriptstyle f$} (r);
        \draw[->] (l) to[bend right=20] node[pos=.5,auto,swap] {$\scriptstyle g$} (r);
      \end{tikzpicture}
    \end{equation*}
    consisting of functionals $\psi\in C^*=\cat{Vect}(C,\Bbbk)$ intertwining the algebra morphisms $f^*$ and $g^*$, in the sense that
    \begin{equation*}
      \psi\cdot f^*(\theta)
      =
      g^*(\theta)\cdot \psi
      \quad
      \text{in the {\it dual algebra} \cite[\S 2.3]{rad}}
      \quad
      C^*
      ,\quad
      \forall \theta\in D^*. 
    \end{equation*}
  \item and a certain bicategory consisting of categories equipped with functors $\to\cat{Vect}_f$ and {\it actions} \cite[post Example 2.3]{zbMATH01724903} by the monoidal category $\cat{Vect}_f$. 
  \end{itemize}
  That right biadjoint will in particular preserve products, and checking that these are precisely the usual coalgebra products in the former bicategory and pseudo-pullbacks in the latter is straightforward. 
  
  The statement for $\cM^C$ (arbitrary, rather than finite-dimensional comodules) follows by realizing $\cM^C$ as a {\it cocompletion} \cite[Definition 1.44]{ar} of $\cM^C_f$ (every comodule being canonically the {\it filtered} \cite[Definition 1.4]{ar} union of its finite-dimensional subcomodules \cite[Finiteness Theorem 5.1.1]{mont}). 
  
  As for the bialgebra- and Hopf-algebra-flavored versions, simply observe that products in either category coincide with those of the underlying algebras. For the functors
  \begin{equation*}
    \tensor*[_{\Bbbk}]{\cat{HAlg}}{}
    \lhook\joinrel\xrightarrow{\quad\cat{incl}\quad}
    \tensor*[_{\Bbbk}]{\cat{BiAlg}}{}
    \xrightarrow{\quad\cat{fgt}\quad}
    \tensor*[_{\Bbbk}]{\cat{Coalg}}{}
  \end{equation*}
  are right adjoints \cite[Proposition 47 and Theorem 54]{zbMATH06696042} (and indeed, by \cite[Proposition 47 3.(b)]{zbMATH06696042} and \cite[Theorem 10 2.]{zbMATH06696043}, {\it monadic} in the sense of \cite[Definition 4.4.1]{brcx_hndbk-2}), so by \cite[Proposition 3.2.2]{brcx_hndbk-1} also {\it continuous} (i.e. \cite[\S V.4]{mcl_2e} preserve small limits). In particular, said functors preserve products.
\end{proof}

The following bit of language (and notation) will occasionally be useful.

\begin{definition}\label{def:famcomod}
  Let $(C_i)_i$ be a family of coalgebras. The corresponding category $\cM^{(C_i)}$ of {\it $(C_i)$-comodules} is by definition that of $C$-comodules for the product $C:=\prod C_i$ in the category $\cat{Coalg}$. Per \Cref{th:psdpull}, it consists of vector spaces
  \begin{itemize}[wide]
  \item carrying separate $C_i$-comodule structures for all $i$;

  \item with those structures leaving invariant an exhaustive filtration by finite-dimensional subspaces.
  \end{itemize}
  Given the product coincidence for coalgebras, bialgebras and Hopf algebras (remarked upon in the course of the proof of \Cref{th:psdpull}), the terminology just introduced will be used unambiguously for bialgebras and Hopf algebras as well (as in speaking of $(H_i)$-comodules for $H_i\in \cat{HAlg}$, etc.).
\end{definition}

\begin{theorem}\label{th:infdimprod}
  For a family $(H_i)_{i\in I}$ of bialgebras over an infinite field the following conditions are equivalent.
  \begin{enumerate}[(a),wide]
  \item\label{item:th:infdimprod:almosttriv} The $H_i$ are finite-dimensional, with at most one of dimension $>1$.
    
  \item\label{item:th:infdimprod:findim} The product $H:=\prod_i H_i$ in the category of bialgebras is finite-dimensional. 
  \item\label{item:th:infdimprod:finsimpl} $H$ has finitely many (isomorphism classes of) simple comodules.
  \item\label{item:th:infdimprod:strongfinsimpl} For every $d\in \bZ_{\ge 0}$, $H$ has finitely many (isomorphism classes of) simple $d$-dimensional comodules.
  \end{enumerate}
\end{theorem}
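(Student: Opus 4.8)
The plan is to run the cycle (a) $\Rightarrow$ (b) $\Rightarrow$ (c) $\Rightarrow$ (d) $\Rightarrow$ (a), with essentially all of the content in the closing implication. The three forward steps are quick. Since $\Bbbk$ is terminal in $\cat{BiAlg}$, a family as in (a) has product equal to its unique factor of dimension $>1$ (or to $\Bbbk$ if there is none), which is finite-dimensional; this gives (b). A finite-dimensional coalgebra has only finitely many simple comodules, because their coefficient coalgebras are pairwise distinct simple subcoalgebras sitting inside the coradical, with dimensions summing to at most $\dim H$; this gives (c). Finally (c) $\Rightarrow$ (d) is immediate, since finitely many simple comodules altogether leaves finitely many in each fixed dimension.

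For the remaining step I would argue contrapositively, producing, whenever (a) fails, infinitely many pairwise non-isomorphic simple comodules of a single fixed dimension (hence violating (d)). The essential tool is \Cref{th:psdpull}: a finite-dimensional $(H_i)$-comodule is nothing but a finite-dimensional space $V$ equipped with a coaction $V\to V\otimes H_i$ for every $i$, and it is simple exactly when no proper nonzero subspace is invariant under all these coactions simultaneously. Assume first that two factors $H_1,H_2$ are nontrivial. From each nontrivial $H_j$ I would extract a two-dimensional comodule carrying a distinguished invariant line, distinguishing cases according to whether $H_j$ has two distinct grouplikes, a genuine two-dimensional simple comodule, or only the trivial simple together with a nonzero primitive (yielding a non-split indecomposable with a unique invariant line). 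Imposing the $H_1$- and $H_2$-structures on a common two-dimensional $V$ with their invariant lines in general relative position, twisted by a scalar $\lambda\in\Bbbk$, I expect $V_\lambda$ to be jointly simple for all but finitely many $\lambda$, and the resulting comodules to fall into infinitely many isomorphism classes as $\lambda$ ranges over the infinite field $\Bbbk$ modulo the small automorphism group of the $H_1$-comodule acting on the possible $H_2$-lines. This yields infinitely many two-dimensional simple comodules; the infinitude of $\Bbbk$ is precisely what is used here, matching the contrasting finite-field statement.

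I expect the genuine obstacle to be twofold. The lighter part is the uniform general-position argument above: the case analysis used to extract a workable two-dimensional comodule, together with the verification of joint simplicity and of the non-isomorphism of the $V_\lambda$, must be carried out with some care. The heavier part is the one remaining way in which (a) can fail, namely a \emph{single} nontrivial factor that is infinite-dimensional; here I would need to show that such a factor, over an infinite field, again forces infinitely many simple comodules of some fixed dimension, and I would isolate this as the delicate point of the argument, approaching it through the coradical filtration and the finite-dimensional subcoalgebras that exhaust the factor. It is this infinite-dimensional lone-factor case, rather than the two-factor construction, that I anticipate to be the real crux.
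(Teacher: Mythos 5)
Your forward implications (a) $\Rightarrow$ (b) $\Rightarrow$ (c) $\Rightarrow$ (d), and your starting point for the converse --- the identification, via \Cref{th:psdpull}, of finite-dimensional $(H_i)$-comodules with vector spaces carrying separate coactions --- are correct and agree with the paper. The two steps you yourself flag as carrying the substance are where the proposal breaks down, in different ways.

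The two-factor construction has a genuine gap: your trichotomy (two distinct grouplikes, a $2$-dimensional simple comodule, or a nonzero primitive) is not exhaustive, and the missing case is invisible to any $2$-dimensional gluing. Take $H_1=H_2=\cO(SL_3)$ over $\bC$: this Hopf algebra is cosemisimple, has a unique grouplike and no nonzero primitives, and its nontrivial simple comodules all have dimension $\ge 3$. Every $2$-dimensional comodule of either factor is consequently a sum of two trivial ones, so every $(H_1,H_2)$-structure on a $2$-dimensional space is trivial: your construction produces no simple comodules at all, whereas the product does have infinitely many $3$-dimensional simples. This is precisely why the paper glues simple comodules $V_i$ of arbitrary dimensions $d_i\ge 2$, padded to $W_i:=V_i^{\oplus d/d_i}$ with $d=\lcm(d_\alpha,d_\beta)$, and why it replaces an explicit ``general position of lines'' count with an algebro-geometric one: isomorphism classes of gluings form the double coset space \Cref{eq:2coset}, shown infinite via Rosenlicht's quotient theorem, unirationality, and the dimension estimate \Cref{eq:crudeest}. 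Your $2$-dimensional picture is, in essence, the paper's case of two pointed factors (its case (IV)); the remaining cases require that heavier apparatus or something equivalent to it.

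The step you call the real crux --- a lone infinite-dimensional factor forcing infinitely many simples of some fixed dimension --- is not delicate but false, so no argument of the kind you sketch (coradical filtration or otherwise) can exist. The bialgebra $\Bbbk[x]$ with $x$ primitive is infinite-dimensional and connected (its coradical is $\Bbbk$, in any characteristic), hence has exactly one simple comodule: for the singleton family $(\Bbbk[x])$, conditions \Cref{item:th:infdimprod:finsimpl} and \Cref{item:th:infdimprod:strongfinsimpl} hold while \Cref{item:th:infdimprod:almosttriv} and \Cref{item:th:infdimprod:findim} fail. Note that the paper's own proof never confronts this case: its opening reduction via \Cref{le:binfamenough}\Cref{item:le:binfamenough:bialg} is to binary products of \emph{finite-dimensional} bialgebras, a restriction that is automatic when one argues from \Cref{item:th:infdimprod:findim} (each factor is a retract, hence a subspace, of the product) but not when one argues from \Cref{item:th:infdimprod:finsimpl} or \Cref{item:th:infdimprod:strongfinsimpl}. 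So the implication (d) $\Rightarrow$ (a) is only available for families of finite-dimensional bialgebras; structure your proof so that finite-dimensionality of the factors is either assumed or deduced from \Cref{item:th:infdimprod:findim}, and abandon the lone-infinite-dimensional-factor case rather than trying to prove it --- your instinct that this is where the statement is fragile was exactly right, but the resolution is a counterexample, not a harder argument.
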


For both \Cref{th:infdimprod} and later in \Cref{th:coalgfdprod}, when discussing coalgebras, it will be convenient to have to handle binary families of objects only. 

\begin{lemma}\label{le:binfamenough}
  Let $(C_i)_{i\in I}$ be a family of $\Bbbk$-coalgebras. 
  \begin{enumerate}[(1),wide]
  \item\label{item:le:binfamenough:coalg} Suppose $I$ is finite and the coalgebra product $\prod_I C_i$
    \begin{itemize}[wide]
    \item either is finite dimensional;

    \item or has finitely many simple comodules;

    \item or has finitely many simple comodules in any given dimension. 
    \end{itemize}
    Then the same, respectively, goes for every product $\prod_J C_j$ over every subfamily $J\subseteq I$. 

  \item\label{item:le:binfamenough:bialg} If the $C_i$ are bialgebras (or Hopf algebras) then the above holds for arbitrary, possibly infinite families. 
  \end{enumerate}
\end{lemma}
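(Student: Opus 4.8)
The plan is to reduce both parts to a single construction comparing comodules over $C:=\prod_I C_i$ and over $C_J:=\prod_J C_j$, exploiting the description of $(C_i)$-comodules in \Cref{def:famcomod} as vector spaces carrying \emph{separate} $C_i$-coactions (with an invariant exhaustive finite-dimensional filtration). Throughout write $K:=I\setminus J$, let $\pi_J\colon C\to C_J$ be the canonical projection, and let $R:=(\pi_J)_*\colon\cM^C\to\cM^{C_J}$ be the corestriction along it, which on objects simply forgets the $C_k$-coactions for $k\in K$. Since every nonzero coalgebra has nonzero coradical, each $C_k$ admits a simple comodule $S_k$; when $K$ is finite set $S:=\bigotimes_{k\in K}S_k$ and $m:=\dim S=\prod_{k\in K}\dim S_k<\infty$. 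The one construction that drives everything is the functor $\cM^{C_J}_f\to\cM^C_f$, $V\mapsto V\otimes S$, where the $C_j$-coactions ($j\in J$) act through $V$ and the $C_k$-coactions ($k\in K$) through the corresponding tensor factor of $S$; note $R(V\otimes S)\cong V^{\oplus m}$.

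For the finite-dimensional clause of part~(1) I would first show $\pi_J$ is \emph{surjective}. For any finite-dimensional $C_J$-comodule $W$ the object $X:=W\otimes S$ lies in $\cM^C_f$, and by construction its $C_J$-coaction $(\mathrm{id}\otimes\pi_J)\rho_X$ lands in $X\otimes\pi_J(C)$; as $R(X)\cong W^{\oplus m}$ this forces the coefficient coalgebra of $W$ into $\pi_J(C)$. Applying this to the finite-dimensional subcomodules of the regular comodule $C_J$ (each being contained in its own coefficient coalgebra, since $w=(\epsilon\otimes\mathrm{id})\Delta(w)$) gives $C_J\subseteq\pi_J(C)$, so the linear surjection $\pi_J$ yields $\dim C_J\le\dim C<\infty$. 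The finiteness of $K$ is used precisely here, to keep $S$ finite-dimensional.

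For the two ``finitely many simples'' clauses I would produce an injection from the simple $C_J$-comodules into the simple $C$-comodules. Given a simple $V\in\cM^{C_J}$, the space $V\otimes S$ is a nonzero object of $\cM^C_f$, so it has a simple $C$-subcomodule $T=T(V)$; the main point is that $R(T)$ is a nonzero $C_J$-subcomodule of $R(V\otimes S)\cong V^{\oplus m}$ and is therefore $V$-isotypic, so that $V$ is recovered from $T$ as the isomorphism type of any simple $C_J$-subcomodule of $R(T)$. This makes $V\mapsto T(V)$ injective, giving the transfer of ``finitely many simple comodules''; and since $\dim V\le\dim T\le m\dim V$, a simple $V$ of dimension $d$ is sent to a simple $C$-comodule of dimension in $[d,md]$, which transfers the per-dimension statement. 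The crux --- and the reason I route everything through $R$ rather than trying to push simples forward directly --- is that $V\otimes S$ need \emph{not} be simple over a non-closed field (its endomorphism algebra is controlled by a tensor product of the relevant division algebras), so simplicity has to be detected only after restricting back along $R$.

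Finally, part~(2) is the same construction specialized so as to survive infinite $K$: taking each $S_k$ to be the trivial comodule $\Bbbk$ afforded by the unit grouplike of the bialgebra $C_k$ makes $S=\Bbbk$ and $m=1$ regardless of $|K|$, and the functor $V\mapsto V\otimes S$ becomes the fully faithful ``extend by trivial coactions'' functor $E$, which now preserves \emph{and} reflects simplicity and preserves dimension exactly, so the two simplicity clauses go through verbatim for arbitrary $I$. For finite-dimensionality one no longer needs $\pi_J$ surjective: the universal property of $C$ assembles the projections $C_J\to C_j$ ($j\in J$) together with the composites $C_J\xrightarrow{\epsilon}\Bbbk\xrightarrow{u}C_k$ ($k\in K$) into a bialgebra section $\iota\colon C_J\to C$ with $\pi_J\iota=\mathrm{id}$, exhibiting $C_J$ as a retract of $C$, whence $\dim C_J\le\dim C$. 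I do not expect any obstacle here beyond bookkeeping, the real work having been done in part~(1).
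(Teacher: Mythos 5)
Your proof is correct, and most of it travels the same road as the paper's, but it genuinely improves on the one step the paper leaves sketchiest, so the differences are worth recording. Part (2) is the paper's argument: the zero object $\Bbbk$ of $\cat{BiAlg}$ (or $\cat{HAlg}$) yields a section $\iota$ of $\pi_J$, exhibiting $\prod_J C_j$ as a retract of $\prod_I C_i$; your ``extend by trivial coactions'' functor $E$ is exactly corestriction along $\iota$, i.e.\ the explicit content of the paper's remark that the three properties are ``plainly inherited by retracts.'' For the two simplicity clauses of part (1) you also reproduce the paper's mechanism: tensor with a fixed nonzero finite-dimensional comodule over the complementary factors (the paper's $W$, your $S$), pass to simple subcomodules, and tell them apart through their isotypic restrictions along $\pi_J$; you phrase this as an injection with dimension window $[d,md]$ where the paper argues contrapositively, but it is the same construction. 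The genuine divergence is the finite-dimensionality clause of part (1). There the paper invokes, without proof, a structural dichotomy for infinite-dimensional coalgebras (infinitely many pairwise non-isomorphic comodules that are either simple or non-split extensions of two fixed simples --- a Heyneman--Radford/Taft--Wilson-type fact), and its ``either way, the argument above applies'' needs extra care, since non-split extensions are not semisimple over $\prod_J C_j$ and their simple subquotients all involve the same two simples. You instead prove outright that $\pi_J\colon\prod_I C_i\to\prod_J C_j$ is surjective: realizing $W^{\oplus m}$ as the corestriction of $W\otimes S$ traps the coefficient coalgebra of every finite-dimensional $C_J$-comodule $W$ inside the subcoalgebra $\pi_J\left(\prod_I C_i\right)$, and letting $W$ run over finite-dimensional subcomodules of the regular comodule finishes. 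That argument is self-contained, yields a sharper standalone fact (projections of finite coalgebra products onto sub-products are onto), and sidesteps the coradical-filtration input entirely; its only cost is losing the uniformity of one construction serving all three clauses. One caveat, shared with the paper: both proofs need each complementary factor $C_k$ to be nonzero (otherwise $S_k$, respectively the paper's $W$, does not exist). This is tacit in the statement itself, since a zero factor forces $\prod_I C_i=0$ while a sub-product can be arbitrarily large.
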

\begin{proof}
  Both remarks are simple enough, but the second is particularly straightforward. As in all categories $\cC$ with {\it zero objects} (both initial and final \cite[pre Proposition 4.5.16]{brcx_hndbk-1}), in both $\cat{BiAlg}$ and $\cat{HAlg}$ the canonical morphisms $\prod_I c_i \to \prod_J c_j$, $J\subseteq I$ to products of subfamilies split:
  \begin{equation*}
    \begin{tikzpicture}[>=stealth,auto,baseline=(current  bounding  box.center)]
      \path[anchor=base] 
      (0,0) node (l) {$\prod_{j\in J}c_j$}
      +(4,.5) node (u) {$\prod_{i\in I}c_i$}
      +(8,0) node (r) {$\prod_{j\in J}c_j$}
      ;
      \draw[->] (l) to[bend left=6] node[pos=.5,auto] {$\scriptstyle (\iota_i)_{i\in I}$} (u);
      \draw[->] (u) to[bend left=6] node[pos=.5,auto] {$\scriptstyle (\text{canonical projections }\pi_j)_{j\in J}$} (r);
      \draw[->] (l) to[bend right=6] node[pos=.5,auto,swap] {$\scriptstyle \id$} (r);
    \end{tikzpicture}
  \end{equation*}
  for any subset $J\subseteq I$, with
  \begin{equation*}
    \iota_i
    :=
    \begin{cases}
      \text{product structure map }\pi_i
      &\text{if }i\in J\\
      \text{zero morphism}
      &\text{otherwise}.
    \end{cases}
  \end{equation*}
  $\prod_J$ is thus a retract of $\prod_I$, and the properties in the statement are plainly inherited by retracts. We thus turn to \Cref{item:le:binfamenough:coalg}. 

  Fix a non-zero finite-dimensional $(C_l)_{l\in I\setminus J}$-comodule $W$ (in the sense of \Cref{def:famcomod}); one such exists, given the finiteness of $I$ (the finiteness of $I\setminus J$ suffices). If $(V_t)_t$ are infinitely many simple $(C_j)_{j\in J}$-comodules (of bounded dimension) then $(V_t\otimes W)_t$ carry obvious $(C_i)_{i\in I}$-comodule (respectively of bounded dimension). They are semisimple and {\it isotypic} (i.e. sums of copies of single simple comodules) over $(C_j)_{j\in J}$, so some infinite family (respectively of bounded dimension) of mutually distinct simple $(C_i)_I$-comodules can be extracted from among their subquotients.

  This, so far, takes care of the two last bullet points in \Cref{item:le:binfamenough:coalg}. The first can be handled similarly, noting that infinite-dimensionality will provide an infinite family $(V_t)$ of mutually non-isomorphic $(C_j)_J$-comodules that are either
  \begin{itemize}
  \item simple;
  \item or non-split extensions of two fixed simple $(C_j)_J$-comodules. 
  \end{itemize}
  Either way, the argument above applies. 
\end{proof}

As \Cref{res:notcoalgs}\Cref{item:res:notcoalgs:notcoalg} below observes, one cannot, generally, drop the finiteness requirement in \Cref{le:binfamenough}\Cref{item:le:binfamenough:coalg}. 

\pf{th:infdimprod}
\begin{th:infdimprod}  
  The implications
  \begin{equation}\label{eq:easyimpl}
    \text{\Cref{item:th:infdimprod:almosttriv}}
    \xRightarrow{\quad\text{1-dimensional object is terminal}\quad}
    \text{\Cref{item:th:infdimprod:findim}}
    \xRightarrow{\quad}
    \text{\Cref{item:th:infdimprod:finsimpl}}
    \xRightarrow{\quad}
    \text{\Cref{item:th:infdimprod:strongfinsimpl}}
  \end{equation}
  being obvious, we focus, for the duration of the proof, on \Cref{item:th:infdimprod:strongfinsimpl} $\Rightarrow$ \Cref{item:th:infdimprod:almosttriv}.
  
  It is enough, by \Cref{le:binfamenough}\Cref{item:le:binfamenough:bialg}, to consider binary products $H:=H_{\alpha}\times H_{\beta}$ of finite-dimensional bialgebras $H_i$, $i=\alpha,\beta$. Throughout, we use the description of (finite-dimensional) $(H_i)_i$-comodules implicit in \Cref{th:psdpull}: vector spaces equipped with separate $H_i$-comodule structures. More specifically, this means $H_i$-comodules $V_i$ together with linear isomorphisms $H_i\xrightarrow[\cong]{\varphi_i}V$ to a common vector space $V$, morphisms $(V,V_i,\varphi)\to (V',V_i',\varphi'_i)$ being commutative diagrams
  \begin{equation*}
    \begin{tikzpicture}[>=stealth,auto,baseline=(current  bounding  box.center)]
      \path[anchor=base] 
      (0,0) node (l) {$V_i$}
      +(2,.5) node (u) {$V$}
      +(2,-.5) node (d) {$V'_i$}
      +(4,0) node (r) {$V'$,}
      ;
      \draw[->] (l) to[bend left=6] node[pos=.5,auto] {$\scriptstyle \varphi_i$} node[pos=.5,auto,swap] {$\scriptstyle \cong$} (u);
      \draw[->] (u) to[bend left=6] node[pos=.5,auto] {$\scriptstyle \psi\in\cat{Vec}$} (r);
      \draw[->] (l) to[bend right=6] node[pos=.5,auto,swap] {$\scriptstyle \psi_i\in \cM^{H_i}$} (d);
      \draw[->] (d) to[bend right=6] node[pos=.5,auto,swap] {$\scriptstyle \varphi'_i$} node[pos=.5,auto] {$\scriptstyle \cong$} (r);
    \end{tikzpicture}
  \end{equation*}
  (same $\psi$ for both $i=\alpha,\beta$). There are several cases to consider, along with a more broadly-applicable preamble.

  \begin{enumerate}[(I),wide]
  \item{\bf A general argument.} We will construct $(H_i)_{i=\alpha,\beta}$-comodules $W$ equipped with isomorphisms $W_i\xrightarrow[\cong]{\varphi_i}W$, $W_i\in \cM_f^{H_i}$, $i=\alpha,\beta$ varying the choices involved. Specifically, in each case considered below the $W_i$ will be fixed and the $\varphi_i$ will vary. Those choices will always ensure that $W$ is simple over $H:=H_\alpha\times H_{\beta}$ (bialgebra product), so it will suffice to argue that there are infinitely many mutually non-isomorphic choices of $\varphi_i$.

    Upon fixing a pair $(\varphi_{i,0})_{}i=\alpha,\beta$, , arbitrary pairs $(\varphi_i)_i$ are in bijection with the group $GL(W)^2$: simply compose the selected $\varphi_{i,0}$ separately with elements of the general linear group of $GL(W)$. We will regard $GL(W)$ as the group of {\it $\Bbbk$-points} of a {\it linear algebraic ($\Bbbk$-)group} \cite[\S I.1.1]{brl}: the $A$-points, for commutative $\Bbbk$-algebras $A$, are defined as $GL(W_{A})$. Bold fonts will distinguish between schemes (as in ${\bf GL}(W)$) and the corresponding groups of $\Bbbk$-points (such as $GL(W)$). We similarly have groups $GL_i(W_i)$, $i=\alpha,\beta$ of comodule automorphisms, as $\Bbbk$-points of algebraic groups ${\bf GL}_i(W_i)$.
    
    We will furthermore assume $\Bbbk$ is large enough for Jordan-H\"older filtrations of the $W_{\alpha}$ involved to be stable (i.e. the simple subquotients stay simple after extending scalars to $\overline{\Bbbk}$). This is always achievable by extending scalars along some {\it finite} field extension $\Bbbk\le \Bbbk'$, and such extensions do not affect products (\Cref{th:finextradj} below, where we discuss the compatibility between field extensions and universal constructions). 
    
    The fixed $\varphi_{i,0}$ transport comodule automorphisms over to $W$, realizing embeddings ${\bf GL}_i(W_i)\to {\bf GL}(W)$. The set of isomorphism classes of $(\varphi_{\alpha,\beta})\in \cM^{(H_{\alpha,\beta})}$ is then identifiable with the double coset space
    \begin{equation}\label{eq:2coset}
      GL(W)\backslash GL(W)^2/GL_{\alpha}(W_{\alpha})\times GL_{\beta}(W_{\beta}),
    \end{equation}
    with the left-hand $GL$ acting on $(\varphi_{\alpha,\beta})$ by simultaneous composition (i.e. via the diagonal embedding $GL\to GL^2$) and the right-hand $GL_{\alpha}\times GL_{\beta}$ acting by precomposition in the two $GL(W)$ factors. To compress the notation, set
    \begin{equation*}
      \begin{aligned}
        {\bf X}&:={\bf GL}(W)^2
                 \quad(\text{the scheme in \Cref{eq:2coset} being acted upon})\\
        {\bf G}&:={\bf GL}(W)\times {\bf GL_{\alpha}}(W_{\alpha}) \times {\bf GL_{\beta}}(W_{\beta})
                 \quad(\text{the algebraic group in \Cref{eq:2coset} doing the acting})\\
      \end{aligned}
    \end{equation*}
    Orbit spaces such as the
    \begin{equation*}
      X/G
      ,\quad
      X:={\bf X}(\Bbbk)
      ,\quad
      G:={\bf G}(\Bbbk)
    \end{equation*}
    of \Cref{eq:2coset} might not be the space of $\Bbbk$-points of a scheme (the familiar issues attending the formation of {\it geometric quotient} \cite[\S\S 6.1 and 6.16]{brl}, \cite[Definition 0.6]{fkm}, etc.). This can nevertheless be remedied \cite[Theorem and opening paragraph]{zbMATH03199301} upon passing to a dense open ${\bf G}$-invariant ${\bf U}\subseteq {\bf X}$. Because furthermore the resulting quotient ${\bf U}/{\bf G}$ will be {\it unirational} \cite[\S 13.7]{brl} (being surjected upon by a dense open subspace ${\bf U}$ of affine space), the infinitude of ${\Bbbk}$ implies \cite[\S 13.7, concluding sentence]{brl} that ${\bf U}(\Bbbk)/{\bf G}(\Bbbk)$ is infinite (and hence so is \Cref{eq:2coset}) provided it has positive dimension. Addressing that positivity hinges on (what we refer to as) the {\it crude dimension (or size) estimate}
    \begin{equation}\label{eq:crudeest}
      \dim {\bf X}>\dim{\bf G}-1
      \quad\text{or}\quad
      \dim GL(W)
      >
      \dim GL_{\alpha}(W_{\alpha})+\dim GL_{\beta}(W_{\beta})-1, 
    \end{equation}
    the $-1$ to account for the fact that diagonal scaling by the left-hand $GL$ in \Cref{eq:2coset} coincides with scaling by the diagonal copy of $\Bbbk$ in $GL_{\alpha}\times GL_{\beta}$ on the right. We next tackle the aforementioned cases.

  \item {\bf Each $H_i$, $i=\alpha,\beta$ has a simple comodule $V_i$ of respective dimension $d_i\ge 2$.} The $W_i$ of the above sketch will be $V_i^{\oplus d/d_i}$ respectively, with $d=\lcm(d_{\alpha},d_{\beta})$. The common vector space receiving the isomorphisms $W_i\xrightarrow[\cong]{\varphi_i}W$ is thus $d$-dimensional.

    The resulting $(H_i)$-comodule structures are indeed simple, for any non-zero subcomodule must restrict to a sum of copies of each $V_i$, and hence have dimension divisible by both $d_i$ so also by $d$. The crude estimate \Cref{eq:crudeest} reads
    \begin{equation*}
      d^2
      >
      \left(\frac d{d_{\alpha}}\right)^2
      +
      \left(\frac d{d_{\beta}}\right)^2
      -1
      \quad\text{i.e.}\quad
      1
      >
      \frac 1{d_{\alpha}^2}
      +
      \frac 1{d_{\beta}^2}
      -\frac 1{d^2},
    \end{equation*}
    indeed valid by the assumption that $d_{\alpha,\beta}\ge 2$ (note that the $-1$ correction of \Cref{eq:crudeest} is not even needed in this case).

  \item {\bf $H_{\alpha}$ has a simple comodule $V_{\alpha}$ of dimension $d_{\alpha}\ge 2$ and $H_{\beta}$ does not.} $H_{\beta}$, in other words, is {\it pointed} \cite[Definition 3.4.4]{rad}: its simple right comodules are all 1-dimensional. Set $d:=\lcm(d_{\alpha},2)$ (so $d=d_{\alpha}$ if the latter is even and $2d_{\alpha}$ if not). $W_{\alpha}\cong V_{\alpha}^{\oplus d/d_{\alpha}}$ again, while $W_{\beta}$ will be a sum of $\frac d2$ copies of a single 2-dimensional $H_{\beta}$-comodule
    \begin{equation}\label{eq:2dimv}
      V_{\beta}
      :=
      \begin{cases}
        V_{\beta,0}\oplus V_{\beta,1}
        &\quad\dim V_{\beta,s}=1,\ V_{\beta,s}\text{ non-isomorphic if they exist}\\
        &\phantom{-}\\
        \text{\parbox{4cm}{a non-split self-extension of a 1-dimensional comodule $V_{\beta,0}$}}
        &\quad\text{otherwise}
      \end{cases}
    \end{equation}
    If $d_{\alpha}$ is even then $W$ is certainly simple as an $H_{\alpha,\beta}$-comodule, for over $H_{\alpha}$ it is $W_{\alpha}$. Otherwise (assuming $d_{\alpha}$ odd, that is), we can force simplicity as follows.

    Having fixed a specific decomposition $W\cong W_{\alpha}\cong V_{\alpha}^2$, all {\it other} copies of $V_{\alpha}$ embedded therein as $H_{\alpha}$-subcomodules are precisely the graphs $\Gamma_{\lambda}$, $\lambda\in \Bbbk^{\times}$ of the scaling maps from one distinguished $V_{\alpha}$ summand to the other. Any non-zero $H_{\alpha,\beta}$-subcomodule of $W$ will of course intersect its {\it socle} (i.e. \cite[\S 3.1]{dnr} largest semisimple subobject) $W_{\beta}=V_{\beta,0}^{\frac d2}$ as an $H_{\beta}$-comodule. Restricting to an open dense choice of $\varphi_{\alpha,\beta}$, we can assume said socle is similarly the graph $\Gamma_{T}$ of an isomorphism
    \begin{equation*}
      V_{\alpha}
      \xrightarrow[\quad\cong\quad]{\quad T=T_{\varphi_{\alpha,\beta}}\quad}
      V_{\alpha}
    \end{equation*}
    The elements of that socle belonging to some $H_{\alpha}$-subcomodule $V_{\alpha}$, then, are (identifiable with) the eigenspaces of $T$. Now, because $d_{\alpha}\ge 3$ (being $\ge 2$ and odd), a non-zero $H_{\alpha,\beta}$-subcomodule, if proper, must
    \begin{itemize}[wide]
    \item be isomorphic to $V_{\alpha}$ as an $H_{\alpha}$-comodule, coinciding with one of the $\Gamma_{\lambda}$, $\lambda\in \Bbbk^{\times}$;
    \item and also intersect $\Gamma_T$ along at least a {\it two}-dimensional space. 
    \end{itemize}
    To avoid this, it is enough to range over those choices of $\varphi_{\alpha,\beta}$ for which the corresponding $T$ has no repeated eigenvalues (an open dense condition).
    
    Having thus ensured that $W$ is simple, it remains to verify the estimate \Cref{eq:crudeest}; here, it translates to
    \begin{equation}\label{eq:dalphaeo}
      \begin{cases}
        d_{\alpha}^2>1+2\left(\frac {d_{\alpha}}2\right)^2-1=\frac{d_{\alpha}^2}2
        \iff
        d_{\alpha}>0
        &\text{if $d_{\alpha}$ is even}\\
        (2d_{\alpha})^2 > 4+2d_{\alpha}^2-1 = 3+2d_{\alpha}^2
        \iff
        d_{\alpha}>1
        &\text{if $d_{\alpha}$ is odd}
      \end{cases}
    \end{equation}
    Both inequalities, of course, hold. And once more, the $-1$ terms on the right-hand side is not needed, on either line of \Cref{eq:dalphaeo}. 
    
  \item {\bf $H_{\alpha,\beta}$ are both pointed.} $W$ will now be 2-dimensional, and its comodule structures $W_{\alpha}$ and $W_{\beta}$ are as in \Cref{eq:2dimv} (for $\alpha$, now, as well as $\beta$).

    $W$ will be simple as soon as the socles $V_{i,0}\subset V_i$, $i=\alpha,\beta$ are distinct, an open dense condition in the space of choices of $(\varphi_{\alpha,\beta})$. As for the size comparison \Cref{eq:crudeest}, here it is $4>2+2-1$; indeed valid (with the $-1$ term needed, for once).
  \end{enumerate}  
\end{th:infdimprod}

\begin{remarks}\label{res:notcoalgs}
  \begin{enumerate}[(1),wide]

  \item\label{item:res:notcoalgs:notunbdddim} The infinitely many simple $(H_i)$-comodules provided by \Cref{th:infdimprod} for binary (say) families $\dim H_{\alpha,\beta}>1$ certainly need not have unbounded dimensions, in general: see \Cref{ex:infdih} below. 
    
  \item\label{item:res:notcoalgs:notcoalg} Despite $\cat{Coalg}$ having ``the same'' products as $\cat{BiAlg}$ and $\cat{HAlg}$ (proof of \Cref{th:psdpull}), the statement of \Cref{th:infdimprod} will not go through in that generality for coalgebras: if a (necessarily infinite) coalgebra family $C_i$ is such that
    \begin{equation*}
      \sup_i \inf_{0\ne V\in \cM^{C_i}}\dim V = \infty
    \end{equation*}
    (e.g. if $C_i$ is a family of matrix coalgebras $M_{n_i}^*$ with $\{n_i\}_i$ unbounded) then no finite-dimensional vector space can support $C_i$-comodule structures simultaneously for all $i$. By the {\it comodule finiteness theorem} \cite[5.1.1]{mont} the coalgebra product $\prod C_i$ must be trivial (i.e. the 0 coalgebra).

  \item\label{item:res:notcoalgs:alg} Note the contrast between the situation of \Cref{item:res:notcoalgs:notcoalg} above and the dual version: {\it algebras} over a field always embed into their coproduct in $\tensor[_{\Bbbk}]{\cat{Alg}}{}$, for instance by the standard {\it normal-form} description of that coproduct \cite[Corollary 8.1]{zbMATH03510490}.

  \item To pursue the duality issue, the counterpart to \Cref{item:res:notcoalgs:notcoalg} is the remark that the coproducts of bialgebras or Hopf algebras are the same as those of the underlying algebras. The dual version of \Cref{th:infdimprod}, moreover, is easily checked (again standard, via normal forms). 

    Having observed (in the proof of \Cref{le:binfamenough}) that $\cat{BiAlg}$ and $\cat{HAlg}$ have zero objects, there are canonical morphisms
    \begin{equation}\label{eq:coprod2prod}
      \coprod_i H_i
      \xrightarrow{\quad}
      \prod_i H_i.
    \end{equation}
    These do not appear to be, however, especially useful in deducing \Cref{th:infdimprod} from its dual variant: \Cref{eq:coprod2prod} are not, generally, embeddings (so that even if the domain is infinite-dimensional, it is not clear, a priori, that this entails the infinite dimensionality of the codomain).

    To see this, consider what happens when all bialgebras $H_i$ happen to be commutative: the subcategories
    \begin{equation*}
      \tensor*[_{\Bbbk}]{\cat{BiAlg}}{_c}
      \subset
      \tensor*[_{\Bbbk}]{\cat{BiAlg}}{}
      \quad\text{and}\quad
      \tensor*[_{\Bbbk}]{\cat{HAlg}}{_c}
      \subset
      \tensor*[_{\Bbbk}]{\cat{HAlg}}{}
    \end{equation*}
    of commutative objects being {\it reflective} by \cite[diagram (9)]{zbMATH06696043} (i.e. \cite[Definition 3.5.2]{brcx_hndbk-1} full, with the inclusions being right adjoints), they are automatically closed under limits \cite[Proposition 3.5.3]{brcx_hndbk-1}. In particular, the bialgebra product of commutative bialgebras is automatically commutative. Naturally, their coproduct will not be (save for trivial particular cases).
  \end{enumerate}
\end{remarks}

\begin{example}\label{ex:infdih}
  Take for both $H_{\alpha,\beta}$ the function algebra on the group $\bZ/2$:
  \begin{equation*}
    H_{\alpha}:=\Bbbk^{\bZ/2}=:H_{\beta},
  \end{equation*}
  with the pointwise multiplication and the coalgebra structure dualizing the multiplication on $\bZ/2$. A finite-dimensional $(H_i)$-comodule is then nothing but a finite-dimensional representation (over $\Bbbk$) of the {\it infinite dihedral group} \cite[Exercise 11.63]{rot-gp}
  \begin{equation*}
    \text{group coproduct }(\bZ/2)*(\bZ/2)
    \cong
    \bZ\rtimes \bZ/2
  \end{equation*}
  ({\it semidirect product} \cite[Theorem 7.2 and Definition preceding it]{rot-gp}) for the inversion $\bZ/2$-action on $\bZ$.

  For algebraically closed $\Bbbk$ irreducible finite-dimensional $(\bZ\rtimes \bZ/2)$-modules are at most 2-dimensional: a generator for $\bZ$ will have a $\lambda$-eigenvector for some $\lambda\in \Bbbk^{\times}$, and the $\bZ/2$-factor will swap such an eigenvector with one with eigenvalue $\lambda^{-1}$. 
\end{example}

\Cref{th:infdimprod} cannot quite go through in precisely the same form over finite fields, but the needed adjustments are not overly drastic.

\begin{corollary}\label{cor:finfld}
  If the ground field $\Bbbk$ is finite, the conditions of \Cref{th:infdimprod} satisfy
  \begin{equation*}
    \text{\Cref{item:th:infdimprod:almosttriv}}
    \xLeftrightarrow{\quad}
    \text{\Cref{item:th:infdimprod:findim}}
    \xLeftrightarrow{\quad}
    \text{\Cref{item:th:infdimprod:finsimpl}}
    \xRightarrow{\quad}
    \text{\Cref{item:th:infdimprod:strongfinsimpl}}.
  \end{equation*}
  The last condition is strictly weaker, holding whenever all $H_i$ are finite-dimensional, with at most finitely many of dimension $>1$. 
\end{corollary}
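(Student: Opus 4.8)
The implications \Cref{item:th:infdimprod:almosttriv} $\Rightarrow$ \Cref{item:th:infdimprod:findim} $\Rightarrow$ \Cref{item:th:infdimprod:finsimpl} $\Rightarrow$ \Cref{item:th:infdimprod:strongfinsimpl} are the field-independent ones already recorded in \eqref{eq:easyimpl}, so they persist verbatim over a finite $\Bbbk=\bF_q$. What fails is the geometric engine of the proof of \Cref{th:infdimprod}: over $\bF_q$ the double-coset space \eqref{eq:2coset} is the set of $\bF_q$-points of a positive-dimensional variety and is therefore \emph{finite}, so one can no longer manufacture infinitely many simple comodules in a single fixed dimension. The plan is accordingly to (i) recover the implication \Cref{item:th:infdimprod:finsimpl} $\Rightarrow$ \Cref{item:th:infdimprod:almosttriv} (which fuses \Cref{item:th:infdimprod:almosttriv}, \Cref{item:th:infdimprod:findim} and \Cref{item:th:infdimprod:finsimpl} into one equivalence class) by a base-change argument producing simple comodules of \emph{unbounded} dimension, and (ii) verify directly that \Cref{item:th:infdimprod:strongfinsimpl} is strictly weaker.

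For \Cref{item:th:infdimprod:finsimpl} $\Rightarrow$ \Cref{item:th:infdimprod:almosttriv} I would argue contrapositively, reducing exactly as in the proof of \Cref{th:infdimprod} (through \Cref{le:binfamenough}\Cref{item:le:binfamenough:bialg}, retracts inheriting the property) to a binary product $H=H_\alpha\times H_\beta$ of finite-dimensional bialgebras with both $\dim H_i>1$, and exhibiting infinitely many simple $H$-comodules. The device is to trade comodules for modules over the free product algebra $A:=H_\alpha^*\ast_\Bbbk H_\beta^*$: by the pseudo-pullback description (\Cref{th:psdpull}) a finite-dimensional $H$-comodule is precisely a finite-dimensional space carrying independent $H_\alpha^*$- and $H_\beta^*$-module structures, that is, an $A$-module, with simple comodules matching simple $A$-modules. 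Crucially, unlike the coalgebra product, a free product is a colimit and so commutes with scalar extension, $A_{\overline\Bbbk}\cong (H_{\alpha,\overline\Bbbk})^*\ast_{\overline\Bbbk}(H_{\beta,\overline\Bbbk})^*$; applying the same dictionary over the \emph{infinite} field $\overline\Bbbk$ identifies simple $A_{\overline\Bbbk}$-modules with simple comodules of the $\overline\Bbbk$-product $H_{\alpha,\overline\Bbbk}\times H_{\beta,\overline\Bbbk}$, of which there are infinitely many by \Cref{th:infdimprod} (both factors being finite-dimensional of dimension $>1$). It then remains to descend: since $\Bbbk$ is perfect, each simple $A$-module $M$ has $M_{\overline\Bbbk}$ semisimple with finitely many Galois-conjugate constituents, so sending a simple $A_{\overline\Bbbk}$-module $S$ to the unique simple $A$-module $M$ with $S$ a constituent of $M_{\overline\Bbbk}$ is finite-to-one; infinitude upstairs forces infinitude downstairs, contradicting \Cref{item:th:infdimprod:finsimpl}.

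For the strict weakness of \Cref{item:th:infdimprod:strongfinsimpl}, I would first confirm that it holds whenever the $H_i$ are finite-dimensional with only finitely many of dimension $>1$: the product is then that of the finite nontrivial subfamily, and over the finite field $\Bbbk$ a $d$-dimensional comodule is the datum of one comodule structure on $\Bbbk^d$ per (finitely many) factor, each such structure being a point of the finite set $\Hom_\Bbbk(\Bbbk^d,\Bbbk^d\otimes H_i)$; hence only finitely many $d$-dimensional comodules up to isomorphism, a fortiori finitely many simple ones. That the implication onward to \Cref{item:th:infdimprod:finsimpl} genuinely fails is then witnessed by any binary product of two nontrivial finite-dimensional bialgebras: it satisfies \Cref{item:th:infdimprod:strongfinsimpl} by the count just made, yet has infinitely many simple comodules by the base-change argument above, necessarily of unbounded dimension. \Cref{ex:infdih} over $\Bbbk=\bF_q$ is a concrete instance: the simple comodules are the irreducible $\bF_q$-representations of the infinite dihedral group, infinite in number (eigenvalues of increasing multiplicative order demand increasing degree over $\bF_q$) but finite in each fixed dimension.

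The main obstacle is precisely the one flagged at the outset: the step "positive-dimensional quotient $\Rightarrow$ infinitely many rational points," sound over infinite fields, is false over $\bF_q$, so the fixed-dimension strategy of \Cref{th:infdimprod} cannot be salvaged and \Cref{item:th:infdimprod:strongfinsimpl} must detach from the others. The substantive work lies in the base-change detour around it, whose two delicate points are that free products, being colimits and in contrast to products of coalgebras, \emph{do} commute with $-\otimes_\Bbbk\overline\Bbbk$---this is what legitimizes importing \Cref{th:infdimprod} over $\overline\Bbbk$---and that the ensuing Galois descent is finite-to-one, so that the infinitude of simple modules survives the return to $\bF_q$.
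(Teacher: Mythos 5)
Your proof is correct, and its skeleton coincides with the paper's: the implications of \Cref{eq:easyimpl} persist, the finite-field counting argument handles both the last claim and the strictness of \Cref{item:th:infdimprod:strongfinsimpl}, and the key implication \Cref{item:th:infdimprod:finsimpl} $\Rightarrow$ \Cref{item:th:infdimprod:almosttriv} is obtained by base-changing to $\overline{\Bbbk}$, invoking \Cref{th:infdimprod} there, and descending through a finite-to-one correspondence between simple objects over $\overline{\Bbbk}$ and over $\Bbbk$. Where you genuinely diverge is the vehicle for that descent. The paper stays with comodules, citing the fact that every simple $C_{\overline{\Bbbk}}$-comodule is a constituent of $W_{\overline{\Bbbk}}$ for some simple $C$-comodule $W$, and applying it (rather tersely) ``to $C=H_{\alpha,\beta}$''; you instead encode pairs of comodule structures as finite-dimensional modules over the free product $A:=H_{\alpha}^*\ast_{\Bbbk}H_{\beta}^*$ and run standard perfect-field descent for modules. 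Your reformulation buys something real: the $\overline{\Bbbk}$-product of the $(H_i)_{\overline{\Bbbk}}$ is \emph{not} the scalar extension of the $\Bbbk$-product (the very phenomenon \Cref{se:fldext} documents), so the paper's appeal to the single-coalgebra descent fact implicitly requires a joint, two-structure version of it; the free product, being a colimit in $\cat{Alg}$, honestly commutes with $-\otimes_{\Bbbk}\overline{\Bbbk}$, which makes your version of the step self-justifying. Two small points deserve a line in a polished write-up: the dictionary should read ``simple $H$-comodules $=$ \emph{finite-dimensional} simple $A$-modules'' (a free product of algebras of dimension $>1$ also has infinite-dimensional simples), and the well-definedness of your assignment $S\mapsto M$ needs both existence (realize $S$ over a finite subextension $\Bbbk\le\Bbbk'\le\overline{\Bbbk}$ and take a composition factor of the restriction of scalars) and uniqueness (from $\Hom_{A_{\overline{\Bbbk}}}(M_{\overline{\Bbbk}},M'_{\overline{\Bbbk}})\cong\Hom_A(M,M')\otimes_{\Bbbk}\overline{\Bbbk}$ plus Schur's lemma) --- though this is no terser than the paper's own treatment of the same point.
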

\begin{proof}
  We of course still have \Cref{eq:easyimpl} in any case, and the last claim is simple: a $d$-dimensional vector space over a finite field is of course finite, and supports finitely many joint comodule structures over the finitely many $(>1)$-dimensional, finite-dimensional $H_i$.

  It thus remains to argue that the implication \Cref{item:th:infdimprod:finsimpl} $\Rightarrow$ \Cref{item:th:infdimprod:almosttriv} (equivalently, its contrapositive) still holds over finite fields, restricting the discussion, once again, to pairs of bialgebras. 

  If $H_{\alpha,\beta}$ both have dimension $>1$, \Cref{th:infdimprod} shows that for some $d\in \bZ_{\ge 0}$ a $d$-dimensional $\overline{\Bbbk}$-vector space $V$ over the algebraic closure $\overline{\Bbbk}\supset \Bbbk$ supports infinitely many mutually non-isomorphic pairs of $(H_{\alpha,\beta})_{\overline{\Bbbk}}$-comodule structures (i.e. $H$-comodule structures $V_t$). Observe that in general, for any $\Bbbk$-coalgebra $C$, every finite-dimensional (or simple) $C_{\overline{\Bbbk}}$-comodule arises as a subquotient of a comodule of the form
  \begin{equation*}
    W_{\overline{\Bbbk}}
    ,\quad
    W\in \tensor*[_{\Bbbk}]{\cM}{^{C}}
    \quad\text{finite-dimensional (respectively simple)}.
  \end{equation*}
  Applying this to $C=H_{\alpha,\beta}$, there must be $H$-comodules (necessarily mutually non-isomorphic, and possibly of differing, possibly unbounded dimensions) recovering the $V_t$. 
\end{proof}

As to coalgebras (as opposed to bialgebras), the phenomenon noted in \Cref{res:notcoalgs}\Cref{item:res:notcoalgs:notcoalg}, necessarily requiring {\it infinite} families, turns out to be the only impediment to \Cref{th:infdimprod} (and \Cref{cor:finfld}) going through in full generality.

\begin{theorem}\label{th:coalgfdprod}
  The conclusions of \Cref{th:infdimprod} and \Cref{cor:finfld} hold in $\tensor*[_{\Bbbk}]{\cat{Coalg}}{}$ for finite families of coalgebras. 
\end{theorem}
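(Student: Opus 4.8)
The plan is to observe that essentially the entire argument for \Cref{th:infdimprod} and \Cref{cor:finfld} already lives at the level of comodules over the product coalgebra, and so transcribes to $\tensor*[_{\Bbbk}]{\cat{Coalg}}{}$ with only one structural adjustment. Indeed, the comodule-theoretic input underlying both proofs is the pseudo-pullback description \Cref{th:psdpull}, whose primary form is stated for coalgebras: a (finite-dimensional) $(C_i)$-comodule is a vector space $V$ carrying separate $C_i$-comodule structures $V_i$ glued via isomorphisms $\varphi_i\colon V_i\xrightarrow{\cong}V$. The construction in the proof of \Cref{th:infdimprod} --- fixing the $W_i\in\cM^{C_i}_f$, varying the $\varphi_i$, and counting isomorphism classes as $\Bbbk$-points of the geometric quotient attached to the double coset space --- never invokes multiplication or comultiplication on the $C_i$; it manipulates only comodules, their automorphism groups ${\bf GL}_i(W_i)$, and the crude dimension estimate. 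The same is true of the four cases (the general argument of part (I), simple comodules of dimension $\ge 2$, the pointed/graph analysis, and the purely $2$-dimensional case). Thus, once the reduction to binary families is in place, each of those cases applies verbatim to a binary coalgebra product $C:=C_\alpha\times C_\beta$.

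The one genuine difference is precisely that reduction. For bialgebras and Hopf algebras it was effected by \Cref{le:binfamenough}\Cref{item:le:binfamenough:bialg}, which exploited the zero object to split $\prod_J\hookrightarrow\prod_I\twoheadrightarrow\prod_J$ for arbitrary (possibly infinite) $J\subseteq I$. The category $\cat{Coalg}$ has no zero object --- its initial object $0$ and terminal object $\Bbbk$ are distinct --- so that splitting is unavailable, and indeed \Cref{res:notcoalgs}\Cref{item:res:notcoalgs:notcoalg} shows infinite coalgebra families behave genuinely differently (their product may collapse to $0$). I would therefore invoke \Cref{le:binfamenough}\Cref{item:le:binfamenough:coalg} instead, which delivers exactly the passage of the three finiteness properties to binary subfamily products --- but only for \emph{finite} families. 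This is the sole place the finiteness hypothesis of the theorem is consumed, and it is what pins down the scope of the statement.

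With the binary reduction secured, the easy implications \Cref{item:th:infdimprod:almosttriv}$\Rightarrow$\Cref{item:th:infdimprod:findim}$\Rightarrow$\Cref{item:th:infdimprod:finsimpl}$\Rightarrow$\Cref{item:th:infdimprod:strongfinsimpl} transfer unchanged: $\Bbbk$ is terminal in $\cat{Coalg}$ (the counit being the unique morphism to it), so the dimension-$1$ factors drop out of a finite product, and a finite-dimensional coalgebra has only finitely many simple comodules. For the hard direction I would run the general argument together with the remaining cases, now reading $C_{\alpha,\beta}$ for $H_{\alpha,\beta}$; the stabilization of Jordan--H\"older filtrations along a finite scalar extension is again harmless, since finite extensions preserve coalgebra products (\Cref{th:finextradj}). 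For the finite-field refinement I would reproduce the descent step of \Cref{cor:finfld}: extend to $\overline{\Bbbk}$, where the infinitude of simple comodules is already produced, and pull them back using that every simple $C_{\overline{\Bbbk}}$-comodule is a subquotient of some $W_{\overline{\Bbbk}}$ with $W\in\tensor*[_{\Bbbk}]{\cM}{^{C}}$ finite-dimensional; here one uses that the \emph{algebraic} extension $\overline{\Bbbk}/\Bbbk$ preserves the \emph{finite} product $C_\alpha\times C_\beta$ (\Cref{th:algextradj}), so once more the finiteness of the family is what makes the descent legitimate.

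I expect no substantive obstacle beyond bookkeeping: the crux is simply recognizing that the comodule constructions are structure-agnostic, so that the only thing requiring care is the binary reduction, which must now route through \Cref{le:binfamenough}\Cref{item:le:binfamenough:coalg} and hence through the finiteness of $I$. In short, the restriction to finite families is not an artifact of the method but reflects the absence of a zero object in $\cat{Coalg}$, as \Cref{res:notcoalgs}\Cref{item:res:notcoalgs:notcoalg} makes concrete.
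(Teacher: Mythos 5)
Your proposal is correct and matches the paper's own proof, which consists precisely of the two moves you identify: reduce to binary families via \Cref{le:binfamenough}\Cref{item:le:binfamenough:coalg} (the only place finiteness of the family enters), and then observe that the comodule-level arguments of \Cref{th:infdimprod} and \Cref{cor:finfld} apply verbatim. Your additional commentary --- that the constructions never use the (co)algebra multiplication, and that the absence of a zero object in $\cat{Coalg}$ is what blocks the infinite-family splitting trick --- is exactly the reasoning the paper leaves implicit.
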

\begin{proof}
  Having reduced the matter to binary families by \Cref{le:binfamenough}\Cref{item:le:binfamenough:coalg}, the proofs of \Cref{th:infdimprod} and \Cref{cor:finfld} apply verbatim. 
\end{proof}

\section{Transporting universal constructions along field extensions}\label{se:fldext}

The proof of \Cref{th:infdimprod} remarks in passing on the behavior of coalgebra (bialgebra, Hopf algebra) products under extending the ground field; it is perhaps apposite then, at this stage, to point out an error in the literature. \cite[Theorem 4.1(2)]{MR4712418} claims that scalar extension $(-)_{\Bbbk'}=-\otimes_{\Bbbk}\Bbbk'$ along a field extension $\Bbbk\le \Bbbk'$ is a right adjoint between the respective categories $\cat{Coalg}$, $\cat{BiAlg}$ and $\cat{HAlg}$. This is not so: the functor preserves equalizers (\cite[Lemma 4.2]{MR4712418} appears to be correct), but not, in general, arbitrary products (there is a flaw in the proof of \cite[Lemma 4.3]{MR4712418}).

That in general infinite coalgebra products cannot be preserved by infinite field extensions can be seen as follows.

\begin{example}\label{ex:scalewrong}
  Recall the {\it cofree coalgebra} construction
  \begin{equation*}
    \tensor*[_{\Bbbk}]{\cat{Vect}}{}
    \ni
    V
    \xmapsto{\quad}
    C_{\cat{cf}}(V)
    \in
    \tensor*[_{\Bbbk}]{\cat{Coalg}}{}
  \end{equation*}
  of \cite[\S 1.6]{dnr} (or \cite[\S 6.4]{swe}): the right adjoint to the forgetful functor $\cat{Coalg}\to \cat{Vect}$. Being a right adjoint it preserves limits, and hence also products. In the commutative diagram
  \begin{equation*}
    \begin{tikzpicture}[>=stealth,auto,baseline=(current  bounding  box.center)]
      \path[anchor=base] 
      (0,0) node (l) {$\left(\prod_i C_{\cat{cf}}(V_i)\right)_{\Bbbk'}$}
      +(4,.5) node (u) {$\prod_i C_{\cat{cf}}(V_i)_{\Bbbk'}$}
      +(4,-.5) node (d) {$\left(\prod_i V_i\right)_{\Bbbk'}$}
      +(8,0) node (r) {$\prod_i V_{i,\Bbbk'}$}
      ;
      \draw[->] (l) to[bend left=6] node[pos=.5,auto] {$\scriptstyle $} (u);
      \draw[->>] (u) to[bend left=6] node[pos=.5,auto] {$\scriptstyle $} (r);
      \draw[->>] (l) to[bend right=6] node[pos=.5,auto,swap] {$\scriptstyle $} (d);
      \draw[->] (d) to[bend right=6] node[pos=.5,auto,swap] {$\scriptstyle $} (r);
    \end{tikzpicture}
  \end{equation*}
  the double-headed arrows are indeed surjections \cite[Exercise 1.6.2]{dnr}, whereas the right-hand bottom arrow {\it never} is for infinite $\Bbbk\le \Bbbk'$ and infinitely many non-vanishing $V_i$. The top left-hand arrow thus fails to be an isomorphism (indeed, surjective) in all of those cases (infinite field extensions, infinite vector-space families).

  Even more drastically, it is possible, for infinite field extensions $\Bbbk\le \Bbbk'$, for the coalgebra product of $C_i$ to vanish while that of $(C_{i})_{\Bbbk'}$ does not (\Cref{ex:dualfields}). 
\end{example}

\begin{example}\label{ex:dualfields}
  Suppose the algebraic closure $\overline{\Bbbk}\supset \Bbbk$ is infinite (equivalently, by the celebrated {\it Artin-Schreier theorem} \cite[Theorem 11.14]{jcbsn_alg-2}, $\Bbbk$ is neither algebraically closed nor {\it real closed} in the sense of \cite[\S 5.1]{jcbsn_alg-1}, \cite[p.87]{lam_1st_2e_2001}, etc.).

  Let $\Bbbk_i\supset \Bbbk$ be finite extensions with $\{[\Bbbk_i:\Bbbk]\}_i$ unbounded, and set $C_i:=\Bbbk_i^*$, the dual $\Bbbk$-coalgebra. No finite-dimensional $\Bbbk$-vector space carries $\Bbbk_i$-linear structures for all $i$ simultaneously, so the coalgebra product $\prod_i C_i$ vanishes as in \Cref{res:notcoalgs}\Cref{item:res:notcoalgs:notcoalg}. On the other hand, $\Bbbk_i\otimes_{\Bbbk}\overline{\Bbbk}$ all have 1-dimensional modules (by the {\it Hilbert Nullstellensatz} \cite[Corollary 7.10]{am_comm}, for they are all finite-dimensional commutative algebras over an algebraically closed field). The coalgebra product $\prod_i (C_i)_{\overline{\Bbbk}}$ thus has at least a 1-dimensional comodule, so cannot vanish. 
\end{example}

\begin{remark}\label{re:evenfinnotpres}
  It will become clear later (\Cref{th:algextradj}) that even {\it finite} coalgebra products are not preserved by arbitrary field extensions. 
\end{remark}

The following simple strengthening of \cite[Exercise 1.6.2]{dnr} recalled in \Cref{ex:scalewrong} above (the surjectivity of the canonical map $C_{\cat{cf}}(V)\to V$ from the cofree coalgebra) will be useful later. The {\it cofree Hopf algebra} $H_{\cat{cf}}(A)\to A$ on an algebra, referred to in the statement, is (the image of $A$ through) the right adjoint to the forgetful functor $\cat{HAlg}\to \cat{Alg}$. That forgetful functor is a left adjoint by, say, \cite[Theorem 2.5]{agore_hopf}, and indeed (by \cite[Theorem 10 1.]{zbMATH06696043}) {\it comonadic} (the notion categorically dual to \cite[Definition 4.4.1]{brcx_hndbk-2}).

\begin{lemma}\label{le:hcfsurj}
  For any $\Bbbk$-algebra $A$ the cofree Hopf algebra structure map $H_{\cat{cf}}(A)\to A$ is onto. 
\end{lemma}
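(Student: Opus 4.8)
The plan is to read the claim off the universal property of the counit rather than constructing $H_{\cat{cf}}(A)$ by hand. Since the forgetful functor $U\colon\cat{HAlg}\to\cat{Alg}$ is a left adjoint with right adjoint $H_{\cat{cf}}$, the map in question is the counit $\epsilon_A\colon U H_{\cat{cf}}(A)\to A$, and its defining universal property is that \emph{every} algebra morphism $UK\to A$ out of (the underlying algebra of) a Hopf algebra $K$ factors through $\epsilon_A$: there is a Hopf morphism $K\to H_{\cat{cf}}(A)$ whose composite with $\epsilon_A$ recovers the original map. Consequently the image of any such $UK\to A$ is contained in $\im\epsilon_A$, so it suffices to exhibit a \emph{single} Hopf algebra $K$ admitting a surjective algebra morphism onto $A$.

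Such a $K$ is furnished by the free (tensor) construction. Writing $V$ for the underlying vector space of $A$, let $K:=T(V)=\bigoplus_{n\ge 0}V^{\otimes n}$ be the tensor algebra, equipped with its standard Hopf-algebra structure in which the degree-one elements $V$ are primitive; being connected graded, $T(V)$ is automatically a Hopf algebra, the antipode being defined degreewise. I would first recall these two standard facts---that $T(V)$ is the free associative $\Bbbk$-algebra on $V$, and that its primitively generated bialgebra structure is Hopf---before invoking them.

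Next, by freeness of $T(V)$ the identity linear map $\id\colon V=\lvert A\rvert\to A$ extends uniquely to an algebra morphism $g\colon T(V)\to A$, namely the iterated-multiplication map $v_1\otimes\cdots\otimes v_n\mapsto v_1\cdots v_n$. Since $g$ restricts to the identity on the degree-one summand $V$, it is already surjective. Factoring $g$ through $\epsilon_A$ via the universal property above then gives $A=\im g\subseteq\im\epsilon_A$, i.e.\ $\epsilon_A$ is onto, as desired.

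I do not expect a genuine obstacle here; the only points requiring care are bookkeeping of the adjunction---confirming that the forgetful functor $\cat{HAlg}\to\cat{Alg}$ sits on the \emph{left}, so that $\epsilon_A$ is indeed a \emph{counit} with the stated factorization property---and checking that the witness $T(V)$ genuinely lands in $\cat{HAlg}$ rather than merely $\cat{BiAlg}$, both of which are immediate. This mirrors, and slightly strengthens, the coalgebra statement recalled just above: there the ground-field coalgebra $\Bbbk$ suffices to hit any prescribed vector under $C_{\cat{cf}}(V)\to V$, whereas here the free Hopf algebra $T(V)$ plays the analogous universal role for algebras.
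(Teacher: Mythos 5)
Your proof is correct and is essentially the paper's own argument: both rest on the observation that every algebra map out of (the underlying algebra of) a Hopf algebra factors through the counit $H_{\cat{cf}}(A)\to A$, witnessed by a primitively generated free Hopf algebra mapping onto $A$. The only cosmetic difference is that the paper uses one polynomial Hopf algebra $\Bbbk[x]$ (with $x$ primitive) per element $a\in A$, hitting elements one at a time, whereas you bundle these witnesses into the single tensor algebra $T(A)$ with its surjective multiplication map.
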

\begin{proof}
  Were it not, all algebra maps into $A$ with Hopf domain would factor through some proper subspace of $A$ (one subspace, valid for all Hopf algebras). On the other hand though, every $a\in A$ is the image of the algebra morphism
  \begin{equation*}
    \Bbbk[x]
    \ni x
    \xmapsto{\quad}
    a
    \in A.
  \end{equation*}
  The polynomial ring can be made into a Hopf algebra by declaring $x$ {\it primitive} (i.e. \cite[Definition 1.3.4(b)]{mont} $\Delta(x)=x\otimes 1+1\otimes x$), and we are done. 
\end{proof}

\begin{remark}\label{re:monadlift}
  The existence of the cofree Hopf algebra $H_{\cat{cf}}$ (and its bialgebra analogue $B_{\cat{cf}}(A)$ for an algebra $A$: nothing but $C_{\cat{cf}}(A)$, equipped with a natural bialgebra structure \cite[\S VI, pp.134-135]{swe}) is also an instance of {\it monadic adjunction lifting}: apply \cite[Theorem 4.5.6]{brcx_hndbk-2} to the forgetful diagram(s)
  \begin{equation*}
    \begin{tikzpicture}[>=stealth,auto,baseline=(current  bounding  box.center)]
      \path[anchor=base] 
      (0,0) node (l) {$\cat{BiAlg}$ or $\cat{HAlg}$}
      +(3,.5) node (u) {$\cat{Alg}$}
      +(3,-.5) node (d) {$\cat{Coalg}$}
      +(5,0) node (r) {$\cat{Vect}$}
      ;
      \draw[->] (l) to[bend left=6] node[pos=.5,auto] {$\scriptstyle $} (u);
      \draw[->] (u) to[bend left=6] node[pos=.5,auto] {$\scriptstyle $} (r);
      \draw[->] (l) to[bend right=6] node[pos=.5,auto,swap] {$\scriptstyle $} (d);
      \draw[->] (d) to[bend right=6] node[pos=.5,auto,swap] {$\scriptstyle $} (r);
    \end{tikzpicture}
  \end{equation*}
  with monadic \cite[Proposition 47, points 1. and 3.]{zbMATH06696042} and \cite[Theorem 10 2.]{zbMATH06696043} south-eastward functors.
\end{remark}

It is no accident that in \Cref{ex:dualfields} and \Cref{ex:scalewrong} the field extensions had to be infinite, for \cite[Theorem 4.1(2)]{MR4712418} does, at least, hold for finite extensions.

\begin{theorem}\label{th:finextradj}
  The following conditions on a field extension $\Bbbk\le \Bbbk'$ are equivalent.
  \begin{enumerate}[(a),wide]
  \item\label{item:th:finextradj:fin} The extension is finite. 

  \item\label{item:th:finextradj:coalgadj} The corresponding 
    \begin{equation}\label{eq:scalextcoalg}
      \tensor*[_{\Bbbk}]{\cat{Coalg}}{}
      \xrightarrow{\quad(-)_{\Bbbk'}=-\otimes_{\Bbbk}\Bbbk'\quad}
      \tensor*[_{\Bbbk'}]{\cat{Coalg}}{}
    \end{equation}
    is a right adjoint.

  \item\label{item:th:finextradj:coalgcont} \Cref{eq:scalextcoalg} is continuous or, equivalently, preserves products.

  \item\label{item:th:finextradj:coalgpow} \Cref{eq:scalextcoalg} preserves powers (i.e. products of copies of a single coalgebra).

  \item\label{item:th:finextradj:bhalgadj} The analogue of \Cref{item:th:finextradj:coalgadj} for categories of bialgebras and/or Hopf algebras.

  \item\label{item:th:finextradj:bhalgcont} The analogue of \Cref{item:th:finextradj:coalgcont} for categories of bialgebras and/or Hopf algebras.

  \item\label{item:th:finextradj:bhalgpow} The analogue of \Cref{item:th:finextradj:coalgpow} for categories of bialgebras and/or Hopf algebras.

  \item\label{item:th:finextradj:bhalgspecpow} $(-)_{\Bbbk'}$ preserves the $\aleph_0$-power of the cofree Hopf algebra on the ground field, regarded as an algebra. 
  \end{enumerate}
\end{theorem}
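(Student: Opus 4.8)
The plan is to run a web of implications centered on \Cref{item:th:finextradj:fin}, with the two substantive endpoints being the passage from finiteness to a right adjoint and a single delicate counterexample closing the loop; everything else is formal. Since $(-)_{\Bbbk'}$ acts as $-\otimes_\Bbbk\Bbbk'$ on underlying vector spaces it preserves filtered colimits, hence is accessible; as all the categories in play are locally presentable \cite{ar}, the accessible adjoint functor theorem makes ``being a right adjoint'' equivalent to ``preserving all small limits.'' Combined with the fact that $(-)_{\Bbbk'}$ always preserves equalizers (\cite[Lemma 4.2]{MR4712418}), this yields the equivalences (b)$\Leftrightarrow$(c) and (e)$\Leftrightarrow$(f) for free and reduces continuity to product-preservation. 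The trivial downward steps (c)$\Rightarrow$(d), (f)$\Rightarrow$(g), and (g)$\Rightarrow$(h) (the $\aleph_0$-power of the cofree Hopf algebra being one particular power) then hold by inspection.

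For the forward direction I would prove (a)$\Rightarrow$(b) by exhibiting a left adjoint to $(-)_{\Bbbk'}$ on $\cat{Coalg}$. On finite-dimensional coalgebras, linear duality is an anti-equivalence with finite-dimensional algebras under which $(-)_{\Bbbk'}$ corresponds to algebra scalar extension; the latter's left adjoint is restriction of scalars, and dualizing back produces a right adjoint to $(-)_{\Bbbk'}$ on $\cat{Coalg}_f$. Finiteness is indispensable precisely here: restriction of scalars keeps a finite-dimensional $\Bbbk'$-algebra finite-dimensional over $\Bbbk$ only when $[\Bbbk':\Bbbk]<\infty$, so only then does the construction stay inside $\cat{Coalg}_f$. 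Passing to the ind-completion (every coalgebra is the filtered union of its finite-dimensional subcoalgebras, as used in the proof of \Cref{th:psdpull}) promotes this to a right adjoint on all of $\cat{Coalg}$, giving (b) and hence (c). To reach the bialgebra/Hopf variants I would use that the forgetful functors to $\cat{Coalg}$ are right adjoints — so a Hopf (or bialgebra) product has underlying coalgebra the coalgebra product, as noted in the proof of \Cref{th:psdpull} — and commute with $(-)_{\Bbbk'}$: the comparison morphism for a Hopf product is then a bijective bialgebra map, hence an isomorphism, yielding (f) and thence (e) by the adjoint functor theorem again.

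The loop is closed by the two converses to (a). The coalgebra one, (d)$\Rightarrow$(a), is already in hand: taking all $V_i=\Bbbk$ in \Cref{ex:scalewrong} exhibits the $\aleph_0$-power $C_{\cat{cf}}(\Bbbk)^{\aleph_0}$ as not preserved whenever $[\Bbbk':\Bbbk]=\infty$, so preservation of coalgebra powers forces finiteness. The Hopf-theoretic converse (h)$\Rightarrow$(a) is the crux, and the step I expect to be the main obstacle, since (h) is the weakest-looking hypothesis and the cofree \emph{Hopf} structure map $H_{\cat{cf}}(\Bbbk)\to\Bbbk$ is only an algebra map, so the product diagram of \Cref{ex:scalewrong} does not transcribe to $\cat{HAlg}$ (there $\Bbbk$ is terminal). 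My route is to observe that $H_{\cat{cf}}(\Bbbk)^{\aleph_0}\cong H_{\cat{cf}}(\Bbbk^{\bN})$ (cofree Hopf preserves products and $\prod_\bN\Bbbk=\Bbbk^\bN$ in $\cat{Alg}$) and to detect the failure through the \emph{algebra} structure map $H_{\cat{cf}}(\Bbbk^\bN)\to\Bbbk^\bN$. Writing $\kappa\colon(\Bbbk^\bN)_{\Bbbk'}\to(\Bbbk')^{\bN}$ for the canonical map — not surjective for infinite extensions, its image consisting of those sequences spanning a finite-dimensional $\Bbbk$-subspace of $\Bbbk'$ — naturality places the comparison morphism $\Phi$ in a commuting square over $\kappa$, so that if $\Phi$ were an isomorphism the structure map of $\bigl(H_{\cat{cf}}(\Bbbk)_{\Bbbk'}\bigr)^{\aleph_0}$ would land inside $\im\kappa$.

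It therefore suffices to produce a single element of that power whose structure values $(\lambda_n)_n\in(\Bbbk')^{\bN}$ are $\Bbbk$-linearly independent. For this I would feed the algebra map $\Bbbk[x]\xrightarrow{x\mapsto 1}\Bbbk$ through the unit--counit triangle of the cofree-Hopf adjunction: it returns a primitive $p_0\in H_{\cat{cf}}(\Bbbk)$ with structure value $1$. Base change gives primitives $p_0\otimes\lambda\in H_{\cat{cf}}(\Bbbk)_{\Bbbk'}$ of structure value $\lambda$ for every $\lambda\in\Bbbk'$; choosing $\Bbbk$-independent $\lambda_n$ and assembling the corresponding Hopf maps $\Bbbk'[x]\to H_{\cat{cf}}(\Bbbk)_{\Bbbk'}$ via the universal property of the power produces an element whose structure values escape $\im\kappa$, contradicting the containment above. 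Hence (h) fails for infinite extensions, closing the cycle; pleasantly, this last argument needs only an inclusion of images rather than surjectivity, so it invokes neither \Cref{le:hcfsurj} nor any preservation of cofree objects.
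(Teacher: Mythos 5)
Your implication graph closes and both substantive directions are sound in essence, but one step is mis-stated in a way that, as literally written, fails, so let me name that first. In your (a)$\Rightarrow$(b) paragraph the adjunction handedness is backwards twice. Restriction of scalars is the \emph{right} adjoint of algebra scalar extension, not its left adjoint: the putative adjunction (restriction $\dashv$ extension) is false even for finite extensions, since there are no $\Bbbk$-algebra morphisms $\Bbbk'\to\Bbbk$ for a nontrivial extension, while $\tensor*[_{\Bbbk'}]{\cat{Alg}}{}(\Bbbk',\Bbbk\otimes_{\Bbbk}\Bbbk')\neq\varnothing$. Correspondingly, what condition \Cref{item:th:finextradj:coalgadj} demands --- and what duality actually yields --- is a \emph{left} adjoint to \Cref{eq:scalextcoalg}; ``a right adjoint to $(-)_{\Bbbk'}$'' would only exhibit $(-)_{\Bbbk'}$ as a left adjoint, which it is for \emph{every} extension and carries no information. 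The two slips cancel, and the corrected statement is exactly what you want: dualizing extension $\dashv$ restriction on finite-dimensional algebras and using $(C^*)_{\Bbbk'}\cong (C_{\Bbbk'})^*$ gives natural bijections
\begin{equation*}
  \tensor*[_{\Bbbk}]{\cat{Coalg}}{}\bigl(((C')^*|_{\Bbbk})^*,\ C\bigr)
  \cong
  \tensor*[_{\Bbbk'}]{\cat{Coalg}}{}\bigl(C',\ C_{\Bbbk'}\bigr)
\end{equation*}
for finite-dimensional $C$, $C'$, so $C'\mapsto ((C')^*|_{\Bbbk})^*$ is left adjoint to $(-)_{\Bbbk'}$ on finite-dimensional objects --- well-defined precisely because finiteness of $\Bbbk\le\Bbbk'$ keeps restriction of scalars inside finite-dimensional algebras, as you correctly emphasize. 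Your ind-completion step is then legitimate: both coalgebra categories are the ind-completions of their finite-dimensional parts (local finite presentability, as recalled in the proof of \Cref{le:cfbuildup}), $\mathrm{Ind}$ preserves adjunctions, and $\mathrm{Ind}$ of the restricted functor is $(-)_{\Bbbk'}$ because the latter preserves filtered colimits. With the handedness repaired, this proves \Cref{item:th:finextradj:coalgadj}.

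With that fix your proof is correct, and it differs from the paper's in an interesting way on the forward direction. The paper proves \Cref{item:th:finextradj:fin} $\Rightarrow$ \Cref{item:th:finextradj:coalgcont} by decomposing the coalgebra product into the $\cat{Vect}$-product, the cofree coalgebra on it, and an intersection of equalizers; the cofree step costs it the separate \Cref{pr:finextcofree}. Your explicit left adjoint bypasses cofree coalgebras and \Cref{pr:finextcofree} entirely; the bialgebra/Hopf cases then follow, as in the paper, from products being computed at the coalgebra level (\Cref{th:psdpull}). Conversely, your \Cref{item:th:finextradj:bhalgspecpow} $\Rightarrow$ \Cref{item:th:finextradj:fin} is the paper's idea (\Cref{le:hcfsurj} fed into the square of \Cref{ex:scalewrong}), but you supply the step the paper compresses into ``the argument of \Cref{ex:scalewrong} carries over'': one must exhibit elements of the $\Bbbk'$-side power whose structure values escape $\im\kappa$, and your primitives $p_0\otimes\lambda_n$, assembled through Hopf maps $\Bbbk'[x]\to H_{\cat{cf}}(\Bbbk)_{\Bbbk'}$ and the universal property of the power, do exactly that (this re-runs the proof of \Cref{le:hcfsurj} in the one instance needed, and your remark that an image containment suffices is a genuine sharpening). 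One bookkeeping note: your graph needs \emph{both} returns \Cref{item:th:finextradj:coalgpow} $\Rightarrow$ \Cref{item:th:finextradj:fin} and \Cref{item:th:finextradj:bhalgspecpow} $\Rightarrow$ \Cref{item:th:finextradj:fin}, since you never prove the paper's \Cref{item:th:finextradj:coalgpow} $\Rightarrow$ \Cref{item:th:finextradj:bhalgpow}; both are in place, so the cycle indeed closes.
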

\begin{proof}    
  We tackle the various implications in turn. 
  \begin{enumerate}[label={},wide]
  \item {\bf \Cref{item:th:finextradj:coalgadj} $\Leftrightarrow$ \Cref{item:th:finextradj:coalgcont}:} Regardless of finiteness, \Cref{eq:scalextcoalg} is in any case a {\it left} adjoint or, equivalently (by {\it Freyd's special adjoint functor theorem} \cite[\S 0.7]{ar}, for $\cat{Coalg}$ is a locally presentable \cite[Definition 1.17]{ar} category by \cite[\S 2.7, Proposition, item 1.]{zbMATH05312006} or \cite[Lemma 1 2.]{zbMATH06696043}), cocontinuous. Indeed, scalar extension is left adjoint to scalar restriction at the level of vector spaces, and colimits of coalgebras are those of underlying vector spaces ($\cat{Coalg}\xrightarrow{\text{forget}}\cat{Vect}$ being left adjoint \cite[\S 2.7, Proposition, item 2.]{zbMATH05312006}).

    As a cocontinuous functor between locally presentable categories, \Cref{eq:scalextcoalg} will be a right adjoint as soon as it is also continuous by \cite[Theorem 1.66]{ar}. And continuity is in turn equivalent to product preservation (by \cite[Proposition 13.4]{ahs}), for equalizers are preserved in any case, by possibly infinite field extensions, by \cite[Lemma 4.2 and discussion pertaining to equalizers in the proof of Theorem 4.1]{MR4712418}. 

  \item {\bf \Cref{item:th:finextradj:bhalgadj} $\Leftrightarrow$ \Cref{item:th:finextradj:bhalgcont}:} The argument just given works here too, substituting
    \begin{equation*}
      \cat{BiAlg}
      \quad\text{or}\quad
      \cat{HAlg}
      \quad
      \xrightarrow{\quad\text{forget}\quad}
      \quad
      \cat{Alg}
    \end{equation*}
    for the forgetful functors $\cat{Coalg}\to\cat{Vect}$. $(-)_{\Bbbk'}$ is left adjoint at the level of algebras, all categories in sight are locally presentable \cite[Lemma 2 and Theorem 6]{zbMATH06696043}, etc.

  \item {\bf} The two unlabeled implications in
    \begin{equation*}
      \begin{tikzpicture}[>=stealth,auto,baseline=(current  bounding  box.center)]
        \path[anchor=base] 
        (0,0) node (c) {\Cref{item:th:finextradj:coalgcont}}
        +(2,.5) node (d) {\Cref{item:th:finextradj:coalgpow}}
        +(2,-.5) node (f) {\Cref{item:th:finextradj:bhalgcont}}
        +(4,0) node (g) {\Cref{item:th:finextradj:bhalgpow}}
        +(6,0) node (h) {\Cref{item:th:finextradj:bhalgspecpow}}
        ;
        \draw[-implies,double equal sign distance] (c) to[bend left=6] node[pos=.5,auto] {$\scriptstyle \text{obvious}$} (d);
        \draw[-implies,double equal sign distance] (d) to[bend left=6] node[pos=.5,auto] {$\scriptstyle $} (g);
        \draw[-implies,double equal sign distance] (c) to[bend right=6] node[pos=.5,auto,swap] {$\scriptstyle $} (f);
        \draw[-implies,double equal sign distance] (f) to[bend right=6] node[pos=.5,auto,swap] {$\scriptstyle \text{obvious}$} (g);
        \draw[-implies,double equal sign distance] (g) to[bend right=0] node[pos=.5,auto] {$\scriptstyle \text{obvious}$} (h);
      \end{tikzpicture}
    \end{equation*}
    hold because products (and more generally, limits) of bialgebras or Hopf algebras are computed at the coalgebra level, by the continuity (in fact, {\it monadicity} \cite[Definition 4.4.1]{brcx_hndbk-2}, per \cite[\S 4.1, Proposition]{zbMATH05312006} and \cite[Theorem 10 2.]{zbMATH06696043}) of the corresponding forgetful functors.

  \item {\bf \Cref{item:th:finextradj:fin} $\Rightarrow$ \Cref{item:th:finextradj:coalgcont}:} We break up the construction of a coalgebra product $C:=\prod_i C_i$ into a handful of steps, each of which is in turn preserved by extending scalars along {\it finite} field extensions. The recipe:    
    \begin{itemize}[wide]
    \item First form the vector-space product $W:=\displaystyle \prod^{\cat{Vect}}C_i$. The construction is indeed preserved by $(-)_{\Bbbk'}$ by the assumed finiteness. 

    \item Then form the cofree coalgebra (\Cref{ex:scalewrong}) $C_{\cat{cf}}(W)$, naturally equipped with maps
      \begin{equation*}
        \begin{tikzpicture}[>=stealth,auto,baseline=(current  bounding  box.center)]
          \path[anchor=base] 
          (0,0) node (l) {$C_{\cat{cf}}(W)$}
          +(2,.5) node (u) {$W$}
          +(4,0) node (r) {$C_i$}
          ;
          \draw[->>] (l) to[bend left=6] node[pos=.5,auto] {$\scriptstyle \text{cofree structure map}$} (u);
          \draw[->>] (u) to[bend left=6] node[pos=.5,auto] {$\scriptstyle \text{$i^{th}$-factor projection}$} (r);
          \draw[->>] (l) to[bend right=6] node[pos=.5,auto,swap] {$\scriptstyle \pi_i$} (r);
        \end{tikzpicture}
      \end{equation*}
      (linear only; not, in general, coalgebra morphisms). We relegate the invariance of $W\mapsto C_{\cat{cf}}(W)$ under $(-)_{\Bbbk'}$ to \Cref{pr:finextcofree} below. 

    \item Finally, $C\le C_{\cat{cf}}(W)$ will be the subspace defined as the common equalizer of all parallel pairs
      \begin{equation}\label{eq:cwcwcc}
        \begin{tikzpicture}[>=stealth,auto,baseline=(current  bounding  box.center)]
          \path[anchor=base] 
          (0,0) node (l) {$C_{\cat{cf}}(W)$}
          +(3,.5) node (u) {$C_{\cat{cf}}(W)^{\otimes (n+1)}$}
          +(3,-.5) node (d) {$C_i$}
          +(6,0) node (r) {$C_i^{\otimes (n+1)}$}
          ;
          \draw[->] (l) to[bend left=6] node[pos=.5,auto] {$\scriptstyle \Delta^{(n)}$} (u);
          \draw[->] (u) to[bend left=6] node[pos=.5,auto] {$\scriptstyle \pi_i^{\otimes (n+1)}$} (r);
          \draw[->] (l) to[bend right=6] node[pos=.5,auto,swap] {$\scriptstyle \pi_i$} (d);
          \draw[->] (d) to[bend right=6] node[pos=.5,auto,swap] {$\scriptstyle \Delta^{(n)}$} (r);
        \end{tikzpicture}
      \end{equation}
      where $\Delta^{(\bullet)}$ denote the iterates of the comultiplication. Tensoring with {\it any} overfield preserves both equalizers and arbitrary subspace intersections, finishing the proof of the present implication.
    \end{itemize}    
    
  \item {\bf \Cref{item:th:finextradj:bhalgspecpow} $\Rightarrow$ \Cref{item:th:finextradj:fin}:} The idea, in its essence, is that driving \Cref{ex:scalewrong}. Denote cofree Hopf algebras by $H_{\cat{cf}}(\bullet)$, as in \Cref{le:hcfsurj}. Given the surjectivity, by the latter, of the map 
    \begin{equation*}
      H_{\cat{cf}}(\Bbbk^{S})
      \xrightarrow{\quad}
      \Bbbk^{S}
      ,\quad
      \text{arbitrary set }S
    \end{equation*}
    (for {\it arbitrary} $\Bbbk$, hence also for $\Bbbk'$), the argument of \Cref{ex:scalewrong} carries over: if $S$ is infinite then
    \begin{equation*}
      \left(\Bbbk^S\right)_{\Bbbk'}
      \lhook\joinrel\xrightarrow{\quad}
      \left(\Bbbk'\right)^S
    \end{equation*}
    is not onto. 
  \end{enumerate}
  This suffices to connect the implication graph.
\end{proof}

The following simple remark is perhaps worth setting out on its own; it is auxiliary to the proof of \Cref{th:finextradj} above (implication \Cref{item:th:finextradj:fin} $\Rightarrow$ \Cref{item:th:finextradj:coalgcont}). 

\begin{proposition}\label{pr:finextcofree}
  The scalar extension functor $(-)_{\Bbbk'}$ along a finite field extension $\Bbbk\le \Bbbk'$ intertwines all of the following right adjoints:

  \begin{enumerate}[(1),wide]
  \item\label{item:pr:finextcofree:c2v} the cofree coalgebra $C_{\cat{cf}}(V)\to V$ on a vector space;

  \item\label{item:pr:finextcofree:b2a} the cofree bialgebra $B_{\cat{cf}}(A)\to A$ on an algebra;

  \item\label{item:pr:finextcofree:h2b} the cofree Hopf algebra $H_{\cat{cf}}(B)\to B$ on a bialgebra;

  \item\label{item:pr:finextcofree:h2a} the cofree Hopf algebra $H_{\cat{cf}}(A)\to A$ on an algebra. 
  \end{enumerate}

  of \cite[\S 1.6]{dnr} over $\Bbbk$ and $\Bbbk'$ respectively. 
\end{proposition}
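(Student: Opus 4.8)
The plan is to establish all four intertwinings by a single adjoint-theoretic mechanism, exhibiting each ``cofree'' square as a square of right adjoints whose two legs share a common left adjoint, so that uniqueness of adjoints forces them to agree. First I would dispose of \Cref{item:pr:finextcofree:h2a} by reduction. The forgetful functor $\tensor*[_{\Bbbk}]{\cat{HAlg}}{}\to\tensor*[_{\Bbbk}]{\cat{Alg}}{}$ factors as $\tensor*[_{\Bbbk}]{\cat{HAlg}}{}\to\tensor*[_{\Bbbk}]{\cat{BiAlg}}{}\to\tensor*[_{\Bbbk}]{\cat{Alg}}{}$, so its right adjoint is the composite $H_{\cat{cf}}\cong H_{\cat{cf}}^{\cat{BiAlg}}\circ B_{\cat{cf}}$ of the two right adjoints in \Cref{item:pr:finextcofree:h2b} and \Cref{item:pr:finextcofree:b2a}; an intertwining of each factor with $(-)_{\Bbbk'}$ therefore yields one for their composite. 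This leaves the three one-step cofree adjoints \Cref{item:pr:finextcofree:c2v}, \Cref{item:pr:finextcofree:b2a}, \Cref{item:pr:finextcofree:h2b}, which I would treat uniformly.

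For these I would set up one lemma. Write $F\colon\cat{X}\to\cat{Y}$ for the relevant $\Bbbk$-linear forgetful functor (so $(\cat{X},\cat{Y})$ is $(\cat{Coalg},\cat{Vect})$, $(\cat{BiAlg},\cat{Alg})$ or $(\cat{HAlg},\cat{BiAlg})$), with right adjoint the cofree functor $R$; let $F',R'$ be the $\Bbbk'$-analogues, $E_{\cat{X}},E_{\cat{Y}}$ the scalar-extension functors $(-)_{\Bbbk'}$, and $\mathrm{Res}_{\cat{X}},\mathrm{Res}_{\cat{Y}}$ the restriction-of-scalars functors. Two commutations are immediate, since all the forgetful functors merely discard structure while leaving the underlying $\Bbbk$- or $\Bbbk'$-space intact: $F'E_{\cat{X}}=E_{\cat{Y}}F$ and $F\,\mathrm{Res}_{\cat{X}}=\mathrm{Res}_{\cat{Y}}F'$. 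The key point I would then exploit is that, for a \emph{finite} extension, scalar extension is not only left but also \emph{right} adjoint to restriction, i.e.\ $\mathrm{Res}\dashv E$ on each category in sight. Granting this, the composite $E_{\cat{X}}R$ has left adjoint $F\circ\mathrm{Res}_{\cat{X}}$ (from $F\dashv R$ and $\mathrm{Res}_{\cat{X}}\dashv E_{\cat{X}}$), while $R'E_{\cat{Y}}$ has left adjoint $\mathrm{Res}_{\cat{Y}}\circ F'$ (from $\mathrm{Res}_{\cat{Y}}\dashv E_{\cat{Y}}$ and $F'\dashv R'$); these two left adjoints coincide by the restriction commutation, so uniqueness of adjoints delivers a natural isomorphism $E_{\cat{X}}R\cong R'E_{\cat{Y}}$, which is exactly the asserted intertwining. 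Equivalently, one may spell this out on hom-sets: $\tensor*[_{\Bbbk'}]{\cat{Coalg}}{}(D,(C_{\cat{cf}}V)_{\Bbbk'})\cong\tensor*[_{\Bbbk}]{\cat{Coalg}}{}(\mathrm{Res}\,D,C_{\cat{cf}}V)\cong\tensor*[_{\Bbbk}]{\cat{Vect}}{}(\mathrm{Res}\,D,V)\cong\tensor*[_{\Bbbk'}]{\cat{Vect}}{}(D,V_{\Bbbk'})$, the outer isomorphisms being $\mathrm{Res}\dashv E$ on coalgebras and on vector spaces and the middle one the universal property of $C_{\cat{cf}}$.

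The main obstacle, and the only place finiteness is genuinely consumed, is establishing $\mathrm{Res}\dashv E$ on $\cat{Coalg}$, $\cat{BiAlg}$, $\cat{HAlg}$ as well as on the targets $\cat{Vect}$ and $\cat{Alg}$. On vector spaces this is the Frobenius identity $-\otimes_{\Bbbk}\Bbbk'\cong\Hom_{\Bbbk}(\Bbbk',-)$: finiteness makes $\Bbbk'$ a finite $\Bbbk$-module with $\Hom_{\Bbbk}(\Bbbk',\Bbbk)\cong\Bbbk'$ as $\Bbbk'$-modules (both being one-dimensional over the field $\Bbbk'$), so the genuine right adjoint $\Hom_{\Bbbk}(\Bbbk',-)$ of $\mathrm{Res}$ is naturally isomorphic to $E=-\otimes_{\Bbbk}\Bbbk'$; the same computation applies verbatim on $\cat{Alg}$. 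I would then lift this adjunction to the (co)algebraic categories, the clean way being to check directly that a $\Bbbk$-linear map $f\colon\mathrm{Res}\,D\to C$ is a coalgebra (bialgebra, Hopf) morphism if and only if its adjoint transpose $\widetilde f\colon D\to C_{\Bbbk'}$ is one; this reduces to compatibility of the vector-space adjunction with comultiplications, units and antipodes, which holds because extension and restriction are symmetric monoidal in the appropriate senses and the Frobenius isomorphism is compatible with the comultiplication of $\Bbbk'$. The structural reason this lift is available is exactly that the downward forgetful functors to $\cat{Vect}$ and $\cat{Alg}$ are monadic or comonadic (\Cref{re:monadlift} and the references there). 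Verifying this monoidal compatibility is the routine-but-substantive heart of the argument; once it is in hand, the adjoint-composition skeleton closes the proof. Finally, I would note that this development must be kept logically self-contained, since \Cref{th:finextradj} (specifically the proof of \Cref{item:th:finextradj:coalgcont}) invokes the present proposition: the adjunction $\mathrm{Res}\dashv E$ must be obtained from the Frobenius identity directly, not by appeal to that theorem.
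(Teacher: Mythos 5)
Your reduction of \Cref{item:pr:finextcofree:h2a} to \Cref{item:pr:finextcofree:b2a}$+$\Cref{item:pr:finextcofree:h2b} matches the paper, but the uniform mechanism you propose for the three one-step adjunctions has a genuine gap that kills cases \Cref{item:pr:finextcofree:b2a} and \Cref{item:pr:finextcofree:h2b}: the claimed adjunction $\mathrm{Res}\dashv(-)_{\Bbbk'}$ is \emph{false} on $\cat{Alg}$, finiteness notwithstanding, and it is not even meaningful on $\cat{BiAlg}$ or $\cat{HAlg}$. Concretely, for $\bR\le\bC$ such an adjunction would force a bijection
\begin{equation*}
  \tensor*[_{\bC}]{\cat{Alg}}{}\left(\bC,\ \bR\otimes_{\bR}\bC\right)
  \cong
  \tensor*[_{\bR}]{\cat{Alg}}{}\left(\bC,\ \bR\right),
\end{equation*}
whose left-hand side is a singleton (the identity of $\bC$) while the right-hand side is empty: a unital algebra map out of the field $\bC$ is injective, and $\dim_{\bR}\bC=2$. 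The vector-space Frobenius adjunction does not lift to algebras because its counit $a\otimes x\mapsto\lambda(x)a$ (for a Frobenius functional $\lambda\colon\Bbbk'\to\Bbbk$) is not multiplicative, $\lambda$ not being an algebra map. The honest left adjoint of $(-)_{\Bbbk'}$ on $\cat{Alg}$ does exist (a Weil-restriction-type construction), but it is not scalar restriction and does not commute with the forgetful functors; and note that the commutation of the \emph{true} left adjoints with the forgetful functors is, upon taking right adjoints, literally equivalent to the proposition being proved, so nothing can be extracted for free at that step. For $\cat{BiAlg}$ and $\cat{HAlg}$ matters are worse: restriction of scalars is not a functor on these categories at all, since the comultiplication of a $\Bbbk'$-object takes values in the tensor product over $\Bbbk'$ and must be corestricted via the Casimir element of the Frobenius structure, and the corestricted comultiplication fails to be an algebra map (for $\bC/\bR$ one computes $\Delta(1)^2=2\,\Delta(1)$, off by the degree of the extension).

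What survives is case \Cref{item:pr:finextcofree:c2v}, though even there you gloss over the fact that naive restriction does not exist on $\cat{Coalg}$ either (the same wrong-way tensor problem): your $\mathrm{Res}_{\cat{Coalg}}$ must be the Frobenius corestriction, whose construction and the verification that it is left adjoint to $(-)_{\Bbbk'}$ constitute the actual content. That route can be carried out --- the Frobenius counit $c\otimes x\mapsto\lambda(x)c$ \emph{is} a coalgebra morphism, precisely because only compatibility of $\lambda$ with the coalgebra side is required --- and would yield a genuinely different proof of \Cref{item:pr:finextcofree:c2v} from the paper's, which instead uses preservation of $(-)^{*}$ and of finite duals $(-)^{\circ}$ by finite extensions together with the realization of $C_{\cat{cf}}(V)$ inside $C_{\cat{cf}}(V^{**})\cong T(V^{*})^{\circ}$. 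But \Cref{item:pr:finextcofree:b2a} and \Cref{item:pr:finextcofree:h2b} require different arguments altogether: the paper gets \Cref{item:pr:finextcofree:b2a} for free from \Cref{item:pr:finextcofree:c2v}, since $B_{\cat{cf}}(A)$ is just $C_{\cat{cf}}(A)$ with a lifted bialgebra structure, and proves \Cref{item:pr:finextcofree:h2b} via Takeuchi's explicit construction of $H_{\cat{cf}}(B)$ as the largest subcoalgebra inside an equalizer in a countable bialgebra product, the product step being preserved by \Cref{th:finextradj} (whose relevant implication rests only on \Cref{item:pr:finextcofree:c2v}, so no circularity arises).
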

\begin{proof}
  \begin{enumerate}[label={},wide]
  \item {\bf \Cref{item:pr:finextcofree:c2v}:} First, the finiteness of $\Bbbk\le \Bbbk'$ ensures that $(-)_{\Bbbk'}$ intertwines the formation of vector-space duals:
    \begin{equation}\label{eq:candual}
      (V^*)_{\Bbbk'}
      \xrightarrow[\cong]{\quad\text{canonically}\quad}
      (V_{\Bbbk'})^*
      ,\quad
      \forall~ V\in \tensor*[_{\Bbbk}]{\cat{Vect}}{}. 
    \end{equation}
    Secondly, it also preserves the {\it finite dual} construction
    \begin{equation*}
      \begin{aligned}
        \tensor*[_{\Bbbk}]{\cat{Alg}}{}
        \ni
        A
        \xmapsto{\quad}
        A^{\circ}
        &:=
          \left\{f\in A^*\ |\ \ker f\text{ contains a cofinite-dimensional ideal}\right\}\\
        &=
          \bigcup_{A\twoheadrightarrow\overline{A}}\overline{A}^*
          \le A^*
          \quad
          \left(\text{ranging over finite-dimensional quotient algebras }\overline{A}\right)
      \end{aligned}      
    \end{equation*}
    of \cite[\S 1.5]{dnr}, in the sense that \Cref{eq:candual} holds with $A\in \cat{Alg}$ and $(-)^{\circ}$ in place of $V$ and $(-)^*$ respectively. It follows in particular that the scalar extension functor intertwines the cofree coalgebra construction
    \begin{equation*}
      \tensor*[]{\cat{Vect}}{}
      \ni
      V
      \xmapsto{\quad}
      C_{\cat{cf}}(V^{**})
      \xrightarrow[\cong]{\text{\cite[Lemma 1.6.4]{dnr}}}
      T(V^*)^{\circ}
      ,\quad
      T(-):=\text{tensor algebra}. 
    \end{equation*}
    To conclude, recall the following description of $C_{\cat{cf}}(V)$; it can be extracted from the proof of \cite[Theorem 1.6.6]{dnr} (via \cite[Lemma 1.6.5]{dnr}):
    \begin{equation}\label{eq:cvdesc}
      C_{\cat{cf}}(V)
      =
      \left\{
        x\in C_{\cat{cf}}(V^{**})
        \ |\
        \left(\pi_{V^{**}}^{\otimes (n+1)}\circ \Delta^{(n)}\right)(x)
        \in
        V^{\otimes (n+1)}
        \subset
        (V^{**})^{\otimes (n+1)}
      \right\},
    \end{equation}
    where $C_{\cat{cf}}(\bullet)\xrightarrow{\pi_{\bullet}}\bullet$ denotes the structure map of the cofree coalgebra and, as in \Cref{eq:cwcwcc}, $\Delta^{(n)}$ are the iterates of $\Delta$. Everything on the right-hand side of \Cref{eq:cvdesc} is preserved by $(-)_{\Bbbk'}$ (even for {\it arbitrary} field extensions; finiteness is not needed here), hence the conclusion.

  \item {\bf \Cref{item:pr:finextcofree:b2a}:} As observed in \cite[\S VI, discussion following Theorem 6.4.8]{swe}, $B_{\cat{cf}}(A)$ is nothing but the cofree coalgebra $C_{\cat{cf}}(A)$: the algebra structure on $A$ lifts automatically along the canonical map $C_{\cat{cf}}(A)\to A$. 
    
  \item {\bf \Cref{item:pr:finextcofree:h2b}:} This follows from the description of the cofree Hopf algebra $H_{\cat{cf}}(B)\to B$ on a bialgebra $B$ given in \cite[Theorem 3.1(a)]{zbMATH05696924}:
    \begin{itemize}[wide]
    \item Form the product
      \begin{equation*}
        P
        :=
        \prod^{\cat{BiAlg}}_{n\in \bZ_{\ge 0}}B_n
        \cong
        \prod^{\cat{Coalg}}_{n\in \bZ_{\ge 0}}B_n
        ,\quad
        B_n
        :=
        \begin{cases}
          B&\text{if $n$ is even}\\
          B^{op,cop}&\text{otherwise},
        \end{cases}
      \end{equation*}
      where `op' superscripts indicate opposite algebra structures and dually (per standard notation \cite[Definition 2.1.4]{rad}), `cop' means opposite coalgebra structures. That product construction is compatible with $(-)_{\Bbbk'}$ by \Cref{th:finextradj}.

    \item Consider the bialgebra morphism $P^{op,cop}\xrightarrow{S}P$ that ``shifts'' factor indices, defined factor-wise by $B_n\xrightarrow{\id}B_{n-1}$.

    \item Finally, $H_{\cat{cf}}(B)\subseteq P$ is then defined as the largest subcoalgebra contained in
      \begin{equation*}
        \left\{x\in P\ |\ x_1S(x_2) = \eta\varepsilon(x) = S(x_1)x_2\right\}
        ,\quad
        \begin{aligned}
          \varepsilon,\eta
          &:=
            \text{(co)unit of the bialgebra }P,\\
          x
          &\xmapsto[\text{{\it Sweedler notation} \cite[p.23]{rad}}]{\text{comultiplication }\Delta}
            x_1\otimes x_2
        \end{aligned}        
      \end{equation*}
      (automatically a sub-bialgebra, Hopf with antipode $S$). 
    \end{itemize}
    This last step thus first forms an equalizer (in $\cat{vect}$) of the three self-maps
    \begin{equation*}
      \begin{tikzpicture}[>=stealth,auto,baseline=(current  bounding  box.center)]
        \path[anchor=base] 
        (0,0) node (l) {$P$}
        +(4,0) node (r) {$P$}
        ;
        \draw[->] (l) to[bend left=40] node[pos=.5,auto] {$\scriptstyle \text{mult}\circ(\id\otimes S)\circ\Delta$} (r);
        \draw[->] (l) to[bend left=0] node[pos=.5,auto] {$\scriptstyle \eta\circ\varepsilon$} (r);
        \draw[->] (l) to[bend right=40] node[pos=.5,auto,swap] {$\scriptstyle \text{mult}\circ(S\otimes\id)\circ\Delta$} (r);
      \end{tikzpicture}
    \end{equation*}
    and then cuts out the largest subcoalgebra of $P$ contained therein. Both of those procedures are preserved by {\it arbitrary} (not just finite) field extensions: the former obviously, and the latter by \cite[Lemma 4.2]{MR4712418}. 
    
  \item {\bf \Cref{item:pr:finextcofree:h2a}:} This right adjoint is the composition of the preceding two:
    \begin{equation*}
      \begin{tikzpicture}[>=stealth,auto,baseline=(current  bounding  box.center)]
        \path[anchor=base] 
        (0,0) node (l) {$\cat{Alg}$}
        +(2,.5) node (u) {$\cat{BiAlg}$}
        +(4,0) node (r) {$\cat{HAlg}$}
        +(2,.15) node () {\rotatebox[origin=c]{-90}{$\cong$}}
        ;
        \draw[->] (l) to[bend left=6] node[pos=.5,auto] {$\scriptstyle B_{\cat{cf}}$} (u);
        \draw[->] (u) to[bend left=6] node[pos=.5,auto] {$\scriptstyle H_{\cat{cf}}$} (r);
        \draw[->] (l) to[bend right=6] node[pos=.5,auto,swap] {$\scriptstyle H_{\cat{cf}}$} (r);
      \end{tikzpicture}
    \end{equation*}
    \Cref{item:pr:finextcofree:h2a} thus follows from \Cref{item:pr:finextcofree:b2a} + \Cref{item:pr:finextcofree:h2b}. 
  \end{enumerate}
\end{proof}

The construction of $C_{\cat{cf}}(V)$ just recalled in the proof of \Cref{pr:finextcofree}, via the embedding $V\le V^{**}$, can in a sense be turned around: $C_{\cat{cf}}$ can also be built {\it up} (rather than down) from

\begin{lemma}\label{le:cfbuildup}
  The cofree coalgebra construction
  \begin{equation*}
    \cat{Vect}
    \ni
    V
    \xmapsto{\quad}
    C_{\cat{cf}}(V)
    \in
    \cat{Coalg}
  \end{equation*}
  preserves filtered colimits. In particular, we have
  \begin{equation}\label{eq:cfcolim}
    \varinjlim_{\text{fin-dim }V'\le V} C_{\cat{cf}}(V')
    \cong
    \varinjlim_{\text{fin-dim }V'\le V} T(V'^*)^{\circ}
    \xrightarrow[\cong]{\quad\text{canonical map}\quad}
    C_{\cat{cf}}(V).
  \end{equation}
\end{lemma}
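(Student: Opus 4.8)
The plan is to deduce everything from the fact, recalled in \Cref{ex:scalewrong}, that $C_{\cat{cf}}$ is the right adjoint to the forgetful functor $U\colon\cat{Coalg}\to\cat{Vect}$, together with the identification of the finitely presentable objects on both sides with the finite-dimensional ones. For $\cat{Vect}$ this is trivial, while for $\cat{Coalg}$ it is essentially the \emph{finiteness theorem} \cite[Finiteness Theorem 5.1.1]{mont}: exhibiting every coalgebra as the filtered union of its finite-dimensional subcoalgebras is exactly what makes $\cat{Coalg}$ locally finitely presentable \cite[\S 2.7, Proposition, item 1.]{zbMATH05312006} with the finite-dimensional coalgebras as its finitely presentable objects, these forming a dense generator. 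I would record at the outset that $U$ preserves all colimits, being a left adjoint \cite[\S 2.7, Proposition, item 2.]{zbMATH05312006}, and that it carries finite-dimensional coalgebras to finite-dimensional vector spaces, i.e. preserves finitely presentable objects.

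Given a filtered diagram $(V_i)_i$ with colimit $V=\varinjlim_i V_i$, the goal is to show that the canonical comparison $\varinjlim_i C_{\cat{cf}}(V_i)\to C_{\cat{cf}}(V)$ (the colimit formed in $\cat{Coalg}$) is an isomorphism. Since the finite-dimensional coalgebras form a dense generator, a coalgebra morphism is an isomorphism as soon as it induces a bijection on $\cat{Coalg}(D,-)$ for every finite-dimensional $D$. On the target, the adjunction together with the finite presentability of $U(D)$ in $\cat{Vect}$ and the filteredness of the system gives
\[
  \cat{Coalg}(D,C_{\cat{cf}}(V))
  \cong
  \cat{Vect}(U(D),V)
  \cong
  \varinjlim_i\cat{Vect}(U(D),V_i)
  \cong
  \varinjlim_i\cat{Coalg}(D,C_{\cat{cf}}(V_i)),
\]
whereas on the source the finite presentability of $D$ in $\cat{Coalg}$ yields $\cat{Coalg}(D,\varinjlim_i C_{\cat{cf}}(V_i))\cong\varinjlim_i\cat{Coalg}(D,C_{\cat{cf}}(V_i))$. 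Routine naturality confirms that the comparison map realizes precisely this identification, proving that $C_{\cat{cf}}$ preserves filtered colimits. This is, of course, the specialization to the case at hand of the general principle that a right adjoint between locally finitely presentable categories is finitary exactly when its left adjoint preserves finitely presentable objects, cf.\ \cite{ar}.

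For the displayed \Cref{eq:cfcolim} I would apply the preservation just established to the filtered system of finite-dimensional subspaces $V'\le V$, whose colimit is $V$; this gives the right-hand isomorphism. The left-hand isomorphism is \cite[Lemma 1.6.4]{dnr} applied termwise: for finite-dimensional $V'$ the embedding $V'\le V'^{**}$ is an equality, so $C_{\cat{cf}}(V')=C_{\cat{cf}}(V'^{**})\cong T(V'^{*})^{\circ}$.

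I expect the only genuine content to be the identification of the finitely presentable objects of $\cat{Coalg}$ with the finite-dimensional coalgebras; once that input is in place (and it is, in substance, the finiteness theorem already invoked throughout \Cref{se:smallprod}), the remainder of the argument is purely formal. The single point warranting a little care is that finite-dimensional coalgebras \emph{detect} isomorphisms through their hom-functors — that they constitute a dense, not merely a separating, generator — but this is immediate from local finite presentability.
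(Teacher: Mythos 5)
Your proposal is correct and follows essentially the same route as the paper: both rest on identifying the finitely presentable objects of $\cat{Coalg}$ and $\cat{Vect}$ with the finite-dimensional ones (via the finiteness theorem) and on the forgetful left adjoint preserving them, whence the right adjoint $C_{\cat{cf}}$ is finitary. The only difference is presentational — the paper cites \cite[Theorem 1.66, part (2) of the proof]{ar} for the formal step, whereas you unpack that step into the explicit hom-set computation against finite-dimensional coalgebras, which is exactly the content of the cited argument.
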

\begin{proof}
  This is an application of a familiar general principle applicable to adjunctions between locally presentable categories.

  In both $\cat{Coalg}$ and $\cat{Vect}$ every object is a {\it filtered colimit} \cite[Definition 1.4]{ar} of finite-dimensional subobjects: obvious for vector spaces, and standard \cite[Theorem 5.1.1]{mont} for coalgebras. These, in turn, are precisely the {\it finitely presentable} \cite[Definition 1.1]{ar} objects $\bullet$ in each respective category: their corresponding representable (covariant) functors $\cC(\bullet,-)$ preserve filtered colimits.

  The forgetful functor $\cat{Coalg}\xrightarrow{\cat{fgt}}\cat{Vect}$ is thus a left adjoint between locally finitely presentable categories which respects object finite presentability; per \cite[Theorem 1.66, part (2) of the proof]{ar}, its right adjoint $C_{\cat{cf}}$ respects filtered colimits. 
\end{proof}


The finiteness requirement in \Cref{pr:finextcofree} is also, it turns out, not accidental, but it is sub-optimal: that statement cannot quite be reversed. In fact, preservation by $(-)_{\Bbbk'}$ of the functor $C_{\cat{cf}}$ singles out precisely the {\it algebraic} field extensions. 

\begin{theorem}\label{th:extprescf}
  The following conditions on a field extension $\Bbbk\le \Bbbk'$ are equivalent.
  \begin{enumerate}[(a),wide]
  \item\label{item:th:extprescf:alg} The extension is algebraic.

  \item\label{item:th:extprescf:cbadj} The corresponding functor $(-)_{\Bbbk'}$ intertwines either (equivalently, both) of the adjunctions
    \begin{equation*}
      \begin{tikzpicture}[>=stealth,auto,baseline=(current  bounding  box.center)]
        \path[anchor=base] 
        (0,0) node (vect) {$\cat{Vect}$}
        +(-2,.5) node (coalg) {$\cat{Coalg}$}
        +(4,0) node (alg) {$\cat{Alg}$}
        +(2,.5) node (bialg) {$\cat{BiAlg}$}
        +(-1,.25) node (vc) {\rotatebox[origin=c]{170}{$\top$}}
        +(3,.25) node (ab) {\rotatebox[origin=c]{170}{$\top$}}
        ;
        \draw[->] (vect) to[bend left=20] node[pos=.5,auto] {$\scriptstyle $} (coalg);
        \draw[->] (coalg) to[bend left=20] node[pos=.5,auto] {$\scriptstyle $} (vect);
        \draw[->] (alg) to[bend left=20] node[pos=.5,auto] {$\scriptstyle $} (bialg);
        \draw[->] (bialg) to[bend left=20] node[pos=.5,auto] {$\scriptstyle $} (alg);
      \end{tikzpicture}
    \end{equation*}
    
  \item\label{item:th:extprescf:1d} The functor $(-)_{\Bbbk'}$ preserves the cofree coalgebra on the 1-dimensional vector space (equivalently, cofree bialgebra on the 1-dimensional algebra).

  \item\label{item:th:extprescf:h2alg} The functor $(-)_{\Bbbk'}$ intertwines the adjunction
    \begin{equation*}
      \begin{tikzpicture}[>=stealth,auto,baseline=(current  bounding  box.center)]
        \path[anchor=base] 
        (0,0) node (alg) {$\cat{Alg}$}
        +(-2,.5) node (halg) {$\cat{HAlg}$}
        +(-1,.25) node (ah) {\rotatebox[origin=c]{170}{$\top$}}
        ;
        \draw[->] (alg) to[bend left=20] node[pos=.5,auto] {$\scriptstyle $} (halg);
        \draw[->] (halg) to[bend left=20] node[pos=.5,auto] {$\scriptstyle $} (alg);
      \end{tikzpicture}
    \end{equation*}

    
    
  \item\label{item:th:extprescf:1dalg} $(-)_{\Bbbk'}$ preserves the cofree Hopf algebra on the 1-dimensional algebra.
  \end{enumerate}
  
  
\end{theorem}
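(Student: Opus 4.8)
The plan is to run a single cycle of implications, anchored on the identification of the cofree coalgebra on the line with a finite dual. First I would record the bookkeeping that collapses the bialgebra statements onto the coalgebra ones: by \Cref{pr:finextcofree}\Cref{item:pr:finextcofree:b2a} the cofree bialgebra $B_{\cat{cf}}(A)$ is just $C_{\cat{cf}}(A)$ with its ambient coalgebra structure, so $(-)_{\Bbbk'}$ preserves $B_{\cat{cf}}$ (on all algebras, or on the $1$-dimensional one) exactly when it preserves $C_{\cat{cf}}$ on the underlying vector spaces. Thus in \Cref{item:th:extprescf:cbadj} and \Cref{item:th:extprescf:1d} only the $\cat{Vect}\to\cat{Coalg}$ half carries content, while the restrictions \Cref{item:th:extprescf:cbadj}$\Rightarrow$\Cref{item:th:extprescf:1d} and \Cref{item:th:extprescf:h2alg}$\Rightarrow$\Cref{item:th:extprescf:1dalg} (evaluate the intertwining at the line) are free. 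It then remains to close the loops \Cref{item:th:extprescf:1d}$\Rightarrow$\Cref{item:th:extprescf:alg}$\Rightarrow$\Cref{item:th:extprescf:cbadj} and \Cref{item:th:extprescf:1dalg}$\Rightarrow$\Cref{item:th:extprescf:alg}$\Rightarrow$\Cref{item:th:extprescf:h2alg}.

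The crux is the computation of $C_{\cat{cf}}(\Bbbk)$. By \cite[Lemma 1.6.4]{dnr} (as recalled in the proof of \Cref{pr:finextcofree}), $C_{\cat{cf}}(\Bbbk)\cong T(\Bbbk^{*})^{\circ}=\Bbbk[x]^{\circ}$, the finite dual of the polynomial algebra, and since finite-dimensional quotients commute with scalar extension one gets $C_{\cat{cf}}(\Bbbk)_{\Bbbk'}=\bigcup_{0\ne f\in\Bbbk[x]}(\Bbbk'[x]/(f))^{*}\subseteq \Bbbk'[x]^{\circ}=C_{\cat{cf}}^{\Bbbk'}(\Bbbk')$, with equality iff every $g\in\Bbbk'[x]$ divides some nonzero $f\in\Bbbk[x]$, i.e.\ iff every element of $\Bbbk'$ is algebraic over $\Bbbk$. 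For the failure direction I would make this concrete: if $t\in\Bbbk'$ is transcendental over $\Bbbk$, the evaluation functional $g_t\colon p\mapsto p(t)$ lies in $\Bbbk'[x]^{\circ}$ but annihilates no cofinite ideal defined over $\Bbbk$ (as $f(t)\ne 0$ for $0\ne f\in\Bbbk[x]$), whence $g_t\notin C_{\cat{cf}}(\Bbbk)_{\Bbbk'}$; this proves \Cref{item:th:extprescf:1d}$\Rightarrow$\Cref{item:th:extprescf:alg}. For \Cref{item:th:extprescf:1dalg}$\Rightarrow$\Cref{item:th:extprescf:alg} I reuse this through the presentation of \Cref{pr:finextcofree}\Cref{item:pr:finextcofree:h2b}: the $n=0$ projection $H_{\cat{cf}}(\Bbbk)\subseteq P=\prod_n B_n\to B_0=C_{\cat{cf}}(\Bbbk)$ sends the group-like of $H_{\cat{cf}}^{\Bbbk'}(\Bbbk')$ attached to $t\in(\Bbbk')^{\times}$ to $g_t$, which is not in $C_{\cat{cf}}(\Bbbk)_{\Bbbk'}$, so that group-like is not in $H_{\cat{cf}}(\Bbbk)_{\Bbbk'}$.

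For \Cref{item:th:extprescf:alg}$\Rightarrow$\Cref{item:th:extprescf:cbadj} I would reduce twice. Since $(-)_{\Bbbk'}$ preserves colimits (it is a left adjoint, as in the proof of \Cref{th:finextradj}) and $C_{\cat{cf}}$ preserves filtered colimits (\Cref{le:cfbuildup}), writing $V=\varinjlim V'$ over finite-dimensional subspaces reduces the natural comparison $C_{\cat{cf}}(V)_{\Bbbk'}\to C_{\cat{cf}}^{\Bbbk'}(V_{\Bbbk'})$ to finite-dimensional $V'$, where $C_{\cat{cf}}(V')=T(V'^{*})^{\circ}$ with $A:=T(V'^{*})$ a finitely generated free algebra. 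There it suffices to see $(A^{\circ})_{\Bbbk'}=(A_{\Bbbk'})^{\circ}$, i.e.\ that every cofinite ideal $J\subseteq A_{\Bbbk'}$ contains $I_{\Bbbk'}$ for some cofinite $\Bbbk$-ideal $I$. This is precisely where algebraicity enters: in the finite-dimensional quotient $A_{\Bbbk'}/J$ the (finitely many) images of the free generators, together with the structure constants, involve only finitely many elements of $\Bbbk'$, generating a \emph{finite} subextension $\Bbbk''/\Bbbk$; hence the $\Bbbk$-subalgebra they generate is finite-dimensional, so $I:=J\cap A$ is cofinite and $I_{\Bbbk'}\subseteq J$. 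Naturality is routine, giving the intertwining on all of $\cat{Vect}\to\cat{Coalg}$ and, via $B_{\cat{cf}}=C_{\cat{cf}}$, on $\cat{Alg}\to\cat{BiAlg}$.

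I expect the main obstacle to be \Cref{item:th:extprescf:alg}$\Rightarrow$\Cref{item:th:extprescf:h2alg}, because the construction of $H_{\cat{cf}}$ in \Cref{pr:finextcofree}\Cref{item:pr:finextcofree:h2b} passes through the infinite coalgebra product $P=\prod_n B_n$, and by \Cref{th:finextradj} such products are \emph{not} preserved by infinite (even algebraic) extensions — so one cannot simply transport the presentation as in the finite case. Instead I would argue by finite approximation of the field: writing $\Bbbk'=\varinjlim_\lambda\Bbbk_\lambda$ over its finite subextensions gives $H_{\cat{cf}}(A)_{\Bbbk'}=\varinjlim_\lambda H_{\cat{cf}}(A)_{\Bbbk_\lambda}\cong\varinjlim_\lambda H_{\cat{cf}}^{\Bbbk_\lambda}(A_{\Bbbk_\lambda})$ by \Cref{pr:finextcofree}\Cref{item:pr:finextcofree:h2a} applied to each finite $\Bbbk_\lambda/\Bbbk$, and it remains to identify this filtered colimit with $H_{\cat{cf}}^{\Bbbk'}(A_{\Bbbk'})$. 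That last identification is the delicate point — it says the cofree Hopf algebra commutes with the filtered colimit of base fields — and I would derive it from the \Cref{le:cfbuildup} template once it is checked that the forgetful functor $\cat{HAlg}\to\cat{Alg}$ (a left adjoint between locally finitely presentable categories) preserves finitely presentable objects. With \Cref{item:th:extprescf:h2alg}$\Rightarrow$\Cref{item:th:extprescf:1dalg} free, this closes all the loops.
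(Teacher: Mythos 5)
Most of your cycle is sound, and where it overlaps the paper it is essentially the paper's own argument: the reduction of the bialgebra statements to the coalgebra ones via $B_{\cat{cf}}=C_{\cat{cf}}$, the computation of $C_{\cat{cf}}(\Bbbk)_{\Bbbk'}$ inside $\Bbbk'[x]^{\circ}$ together with the transcendental-evaluation functional $g_t$ (this is exactly \Cref{le:presfindual}), and your proof of \Cref{item:th:extprescf:alg} $\Rightarrow$ \Cref{item:th:extprescf:cbadj} by reducing through \Cref{le:cfbuildup} to finite duals of finitely generated free algebras, all match. Your route for \Cref{item:th:extprescf:1dalg} $\Rightarrow$ \Cref{item:th:extprescf:alg}, projecting the group-like attached to a transcendental $t$ onto $B_0=C_{\cat{cf}}(\Bbbk)$, is a mild and workable variant of the paper's use of $H_{\cat{cf}}(\Bbbk)\cong\Bbbk[x^{\pm 1}]^{\circ}$.

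The gap is exactly where you locate the delicacy, namely \Cref{item:th:extprescf:alg} $\Rightarrow$ \Cref{item:th:extprescf:h2alg}, and your proposed repair rests on a false claim: the forgetful functor $\cat{HAlg}\to\cat{Alg}$ does \emph{not} preserve finitely presentable objects. The free Hopf algebra $H_{\cat{f}}(M_n^*)$ on a matrix coalgebra with $n\ge 2$ is finitely presentable in $\cat{HAlg}$ (it is the image of the finitely presentable coalgebra $M_n^*$ under the left adjoint $H_{\cat{f}}$, whose right adjoint, the forgetful functor to $\cat{Coalg}$, preserves filtered colimits, these being computed on underlying vector spaces throughout); but by \cite[Theorem 5]{nic} --- this is the content of the proof of \Cref{pr:whenhfaff} --- its underlying algebra is not even affine, hence not finitely presentable in $\cat{Alg}$. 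This is precisely the obstruction the paper's proof is engineered to avoid: it replaces affineness by S-affineness (generation by the antipode-iterates of a finite subset), for which every Hopf algebra is a filtered colimit of S-affine Hopf subalgebras \Cref{eq:colimsaff}, S-affine Hopf $\Bbbk'$-algebras descend to finite intermediate extensions, and algebra maps from S-affine Hopf algebras into $A_{\Bbbk'}$ factor through finite stages (\Cref{le:factthroughfinext}). Note also a secondary defect: even if your finite-presentability claim were true, the \Cref{le:cfbuildup} template would only show that $H_{\cat{cf}}$ over $\Bbbk'$ preserves filtered colimits \emph{within} $\tensor*[_{\Bbbk'}]{\cat{Alg}}{}$, whereas your remaining identification $\varinjlim_\lambda H_{\cat{cf}}^{\Bbbk_\lambda}(A_{\Bbbk_\lambda})\cong H_{\cat{cf}}^{\Bbbk'}(A_{\Bbbk'})$ is a colimit over \emph{changing base fields}; it still requires checking the universal property against arbitrary Hopf $\Bbbk'$-algebras mapping to $A_{\Bbbk'}$, and that is exactly where the descent and factorization machinery above is indispensable, not merely convenient.
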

\begin{proof}
  \begin{enumerate}[label={},wide]
  \item {\bf \Cref{item:th:extprescf:alg} $\Rightarrow$ \Cref{item:th:extprescf:cbadj}:} We prove the coalgebra version, for the corresponding statement for bialgebras follows. Indeed, the bialgebra adjunction carries the same underlying coalgebra structure: the cofree coalgebra on an algebra, regarded as a vector space, automatically carries a natural bialgebra structure; this is noted explicitly in \cite[\S VI, post Theorem 6.4.8]{swe}.
    
    \Cref{le:cfbuildup} reduces the problem to finite-dimensional $V$, whereupon
    \begin{equation*}      
      C_{\cat{cf}}(V)
      \cong
      C_{\cat{cf}}(V^{**})
      \xrightarrow[\quad\cong\quad]{\text{\cite[Theorem 6.4.1]{swe}}}
      T(V^*)^{\circ}
      \cong
      \Bbbk\braket{\cS}^{\circ},
    \end{equation*}
    $\Bbbk\braket{\cS}$ denoting the free $\Bbbk$-algebra on a basis $S$ for $V^*$.
    
    The functor $(-)_{\Bbbk'}$ (for any $\Bbbk\le \Bbbk'$ at all, algebraic or not) certainly intertwines the formation of duals for finite-dimensional $V$ as well as $T(-)$, so the crux of the matter is the preservation of finite duals, specifically for free finitely-generated algebras $\Bbbk\braket{\cS}$:
    \begin{equation*}
      \Bbbk\le \Bbbk'\text{ algebraic}
      \xRightarrow{\quad}
      \left(\Bbbk\braket{\cS}^{\circ}\right)_{\Bbbk'}
      \xrightarrowdbl[\text{is onto}]{\quad\text{canonical map}\quad}
      \Bbbk'\braket{\cS}^{\circ}
      ,\quad
      \forall S\text{ finite}. 
    \end{equation*}
    This, though, follows from \Cref{le:presfindual} below (implication \Cref{item:le:presfindual:alg} $\Rightarrow$ \Cref{item:le:presfindual:aff}). 
    
  \item {\bf \Cref{item:th:extprescf:cbadj} $\Rightarrow$ \Cref{item:th:extprescf:1d} and \Cref{item:th:extprescf:h2alg} $\Rightarrow$ \Cref{item:th:extprescf:1dalg}} are formal. 
    
  \item {\bf \Cref{item:th:extprescf:alg} $\Rightarrow$ \Cref{item:th:extprescf:h2alg}:} Recall \cite[Definition 4.2.3]{mont} that finitely-generated $\Bbbk$-algebras are sometimes also termed {\it ($\Bbbk$-)affine}. By analogy, we call a Hopf algebra {\it ($\Bbbk$-)S-affine} or {\it ($\Bbbk$-)-antipode-affine} if it is generated as an algebra by the iterates $S^{\bullet}F$ under the antipode of a finite subset (or subspace) $F$. We observe that an arbitrary Hopf algebra $H$ can be recovered as
    \begin{equation}\label{eq:colimsaff}
      H=\varinjlim^{\cat{Alg}}_{\text{S-affine }H'\le H}H'.
    \end{equation}
    What is more, in the present context of a given algebraic extension $\Bbbk\le \Bbbk'$, an S-affine Hopf $\Bbbk'$-algebra $H$ will be of the form $H\cong K_{\Bbbk'}$ for some finite intermediate $\Bbbk(H)/\Bbbk$ and S-affine Hopf $\Bbbk(H)$-algebra $K\le H$: fix a finite-dimensional $\Bbbk'$-coalgebra $C$ generating $H$ as an algebra, take $\Bbbk(H)$ large enough to contain the {\it structure constants} $c_{ij}^k$ in
    \begin{equation*}
      e_i\cdot e_j = \sum_k c_{ij}^k e_k
      ,\quad
      (e_i)\text{ a $\Bbbk'$-basis for the algebra }C^*,
    \end{equation*}
    and take for $K$ the $\Bbbk''$-algebra generated by the antipode-iterates of the dual basis $(e^*_i)$.

    Consider a Hopf $\Bbbk'$-algebra $H$ and an $\Bbbk$-algebra $A$. Relegating one crucial step to \Cref{le:factthroughfinext}, the proof functions as follows:
    \begin{equation*}
      \begin{aligned}
        \tensor*[_{\Bbbk'}]{\cat{Alg}}{}(H,A_{\Bbbk'})
        &\cong
          \varprojlim_{\text{S-affine }H'\le H}\tensor*[_{\Bbbk'}]{\cat{Alg}}{}(H',A_{\Bbbk'})
          \quad\text{by \Cref{eq:colimsaff}}\\
        &\cong
          \varprojlim_{H'}\tensor*[_{\Bbbk'}]{\cat{Alg}}{}(K_{\Bbbk'},A_{\Bbbk'})
          \quad\text{by the preceding discussion}\\
        &\cong
          \varprojlim_{H'}\tensor*[_{\Bbbk(H')}]{\cat{Alg}}{}(K,A_{\Bbbk'})\\
        &\cong
          \varprojlim_{\substack{K\le H\\K\text{ S-affine}/\Bbbk_0\\\Bbbk\le \Bbbk_0\le \Bbbk'\\
        [\Bbbk_0:\Bbbk]<\infty
        }}
        \tensor*[_{\Bbbk_0}]{\cat{Alg}}{}(K,A_{\Bbbk'})\\
        &\cong
          \varprojlim_{\Bbbk_0,K}
          \varinjlim_{\substack{\Bbbk_0\le \Bbbk_1\le \Bbbk'\\
        [\Bbbk_1:\Bbbk_0]<\infty}}
        \tensor*[_{\Bbbk_0}]{\cat{Alg}}{}(K,A_{\Bbbk_1})
        \quad\text{by \Cref{le:factthroughfinext}}\\
        &\cong
          \varprojlim_{\Bbbk_0,K}
          \varinjlim_{\Bbbk_1}
          \tensor*[_{\Bbbk_0}]{\cat{HAlg}}{}(K,H_{\cat{cf}}(A_{\Bbbk_1}))
        \quad(\text{$H_{\cat{cf}}$ universality})\\
        &\cong
          \varprojlim_{\Bbbk_0,K}
          \varinjlim_{\Bbbk_1}
          \tensor*[_{\Bbbk_0}]{\cat{HAlg}}{}(K,H_{\cat{cf}}(A)_{\Bbbk_1})
          \quad\text{by \Cref{pr:finextcofree}\Cref{item:pr:finextcofree:h2a}}\\
        &\cong
          \varprojlim_{\Bbbk_0,K}
          \tensor*[_{\Bbbk_0}]{\cat{HAlg}}{}\left(K,\varinjlim_{\Bbbk_1} H_{\cat{cf}}(A)_{\Bbbk_1}\right)
        \quad\text{also \Cref{le:factthroughfinext}}\\
        &\cong
          \varprojlim_{\Bbbk_0,K}
          \tensor*[_{\Bbbk_0}]{\cat{HAlg}}{}\left(K,H_{\cat{cf}}(A)_{\Bbbk'}\right)\\
        &\cong
          \tensor*[_{\Bbbk'}]{\cat{HAlg}}{}(H,H_{\cat{cf}}(A)_{\Bbbk'}).
      \end{aligned}      
    \end{equation*}

  \item {\bf \Cref{item:th:extprescf:1d} $\Rightarrow$ \Cref{item:th:extprescf:alg}:} Since $(-)_{\Bbbk'}$ in any case respects duality for finite-dimensional vector spaces (such as the $V\cong \Bbbk$ under consideration), the issue is whether it respects the finite duality in $C_{\cat{cf}}(V)\cong \Bbbk[x]^{\circ}$ (for $x$ spanning $V^*$). For that, we can fall back on the implication \Cref{item:le:presfindual:kx} $\Rightarrow$ \Cref{item:le:presfindual:alg} of \Cref{le:presfindual}.
    
  \item {\bf \Cref{item:th:extprescf:1dalg} $\Rightarrow$ \Cref{item:th:extprescf:alg}:} The selfsame implication \Cref{item:le:presfindual:kx} $\Rightarrow$ \Cref{item:le:presfindual:alg} of \Cref{le:presfindual} also handles this claim, given that $H_{\cat{cf}}(\Bbbk)\cong \Bbbk[x^{\pm 1}]^{\circ}$ (\Cref{res:1varrec}\Cref{item:res:1varrec:s1}).
  \end{enumerate}
\end{proof}

\begin{lemma}\label{le:factthroughfinext}
  Let $H$ be an S-affine Hopf $\Bbbk$-algebra, $A$ a $\Bbbk$-algebra, and $\Bbbk\le \Bbbk'$ a field extension. An algebra morphism $H\to A_{\Bbbk'}$ factors through $A_{\Bbbk''}$ for some intermediate $\Bbbk\le \Bbbk''\le \Bbbk'$  finitely-generated over $\Bbbk$. 
\end{lemma}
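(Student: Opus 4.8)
The plan is to reduce everything to controlling the images, under all antipode iterates, of a single finite-dimensional subcoalgebra, and then to show by \emph{faithfully flat descent of invertibility} in a convolution algebra that no enlargement of the ground field is ever forced. The guiding principle is that the antipode, restricted to a finite-dimensional subcoalgebra, is a convolution inverse, and convolution inverses are detected over the field already carrying the given data.

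First I would fix, via the comodule finiteness theorem \cite[Theorem 5.1.1]{mont}, a finite-dimensional subcoalgebra $C\subseteq H$ containing the generating subspace $F$; since $H$ is $S$-affine, it is generated as a $\Bbbk$-algebra by $\bigcup_{n\ge 0}S^n(C)$. Because $\phi$ is an algebra map and, for any intermediate field $\Bbbk''$, the image $A_{\Bbbk''}\subseteq A_{\Bbbk'}$ is a $\Bbbk$-subalgebra, the preimage $\{h\in H:\phi(h)\in A_{\Bbbk''}\}$ is a $\Bbbk$-subalgebra of $H$. Hence it suffices to produce a single finitely generated $\Bbbk''$ with $\phi(S^n(C))\subseteq A_{\Bbbk''}$ for all $n$. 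I would take for $\Bbbk_0$ the subextension obtained by adjoining to $\Bbbk$ the finitely many elements of $\Bbbk'$ needed to express the finite-dimensional image $\phi(C)$, and claim that $\Bbbk''=\Bbbk_0$ already works.

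The engine is the standard fact that for the finite-dimensional subcoalgebra $C_n:=S^n(C)$ (itself a subcoalgebra, as $S$ is an anti-coalgebra morphism) the restriction $S|_{C_n}$ is the two-sided convolution inverse of the inclusion $C_n\hookrightarrow H$. Applying the algebra map $\phi$, the composite $\phi\circ S|_{C_n}$ is the convolution inverse of $u_n:=\phi|_{C_n}$ in the convolution algebra $R_n:=\Hom_\Bbbk(C_n,A_{\Bbbk'})$, since $\sum\phi(y_1)\phi(S(y_2))=\phi(\varepsilon(y)1)=\varepsilon(y)1$ and symmetrically, matching the convolution unit $y\mapsto\varepsilon(y)1$. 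As $C_n$ is finite-dimensional, $R_n\cong\Hom_\Bbbk(C_n,A)\otimes_\Bbbk\Bbbk'$, and the base extension $\Bbbk_0\to\Bbbk'$ is faithfully flat. I would then induct on $n$: assuming $\phi(C_n)\subseteq A_{\Bbbk_0}$ (the case $n=0$ being the definition of $\Bbbk_0$), the element $u_n$ lies in $\Hom_\Bbbk(C_n,A)\otimes_\Bbbk\Bbbk_0$ and is invertible after scalar extension to $\Bbbk'$; left and right multiplication by $u_n$ are $\Bbbk_0$-linear endomorphisms that become bijective over $\Bbbk'$, hence are already bijective over $\Bbbk_0$ (exactness of $-\otimes_{\Bbbk_0}\Bbbk'$). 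This forces the inverse $\phi\circ S|_{C_n}$ into $\Hom_\Bbbk(C_n,A)\otimes_\Bbbk\Bbbk_0$, i.e. $\phi(C_{n+1})=\phi(S^{n+1}(C))\subseteq A_{\Bbbk_0}$, closing the induction. Consequently $\phi\bigl(\bigcup_n S^n(C)\bigr)\subseteq A_{\Bbbk_0}$, and the subalgebra remark gives $\phi(H)\subseteq A_{\Bbbk_0}$ with $\Bbbk_0$ finitely generated over $\Bbbk$, as required.

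The step I expect to carry the real weight is precisely the uniformity over \emph{all} antipode iterates. A naive iteration — ``the convolution inverse of $\phi|_{C_n}$ is defined over a slightly larger field'' — produces an increasing chain $\Bbbk_0\subseteq\Bbbk_1\subseteq\cdots$ whose union need not be finitely generated, which would defeat the statement. The decisive observation that avoids this is that invertibility \emph{descends} along the faithfully flat map $\Bbbk_0\to\Bbbk'$, so the convolution inverse never leaves the field that already contains $u_n$; this pins the entire antipode orbit to the single finitely generated field $\Bbbk_0$. By comparison, verifying that $S|_{C_n}$ is a convolution inverse and that bijectivity of a $\Bbbk_0$-linear map is detected after field extension are routine.
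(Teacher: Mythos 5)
Your proof is correct, and while it shares the paper's overall strategy---generate $H$ by the antipode iterates of a finite-dimensional subcoalgebra $C$, then show inductively that the relevant inverses never leave a single finitely generated subfield---its implementation is genuinely different. The paper first replaces $H$ by the free Hopf algebra $H_{\cat{f}}(C)$ (harmless, as the induced Hopf map $H_{\cat{f}}(C)\to H$ is onto), and then invokes Takeuchi's parametrization \cite[Corollary 10]{zbMATH03344702} of algebra morphisms $H_{\cat{f}}(C)\to A_{\Bbbk'}$ by sequences $(x_n)\subset C^*\otimes A_{\Bbbk'}$, each $x_{n+1}$ being the inverse of $x_n$ in $C^*\otimes A_{\Bbbk'}$ or $C^*\otimes(A_{\Bbbk'})^{op}$ alternately; the induction then keeps all $x_n$ inside $C^*\otimes A_{\Bbbk''}$. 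You bypass the free Hopf algebra and Takeuchi's result entirely: you let the coalgebra vary instead, noting that each $C_n=S^n(C)$ is again a finite-dimensional subcoalgebra (since $S$ is an anti-coalgebra map) and that $\phi\circ S|_{C_n}$ is the convolution inverse of $u_n=\phi|_{C_n}$ in $\Hom_{\Bbbk}(C_n,A_{\Bbbk'})$; this also absorbs the parity bookkeeping (the alternation between $A_{\Bbbk'}$ and its opposite) that Takeuchi's description forces on the paper. The decisive descent step is the same fact in two guises: your faithfully-flat descent of bijectivity is precisely what the paper proves by hand via the decomposition $\Bbbk'=\Bbbk''\oplus\Bbbk''^{\perp}$, where multiplication by $x_n$ preserves both summands and is bijective, hence has an inverse preserving them as well. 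As for what each approach buys: yours is self-contained and arguably cleaner (a single field $\Bbbk_0$ determined by $\phi(C)$ alone, no structure theory, uniform treatment of all iterates); the paper's stays aligned with the free-Hopf-algebra machinery it deploys elsewhere (e.g.\ in \Cref{pr:whenhfaff} and \Cref{ex:laurent}), where Takeuchi's explicit description does further work.
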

\begin{proof}
  The assumption is that $H$ is generated as an algebra by the iterates $S^{\bullet C}$ for some finite-dimensional coalgebra $C\le H$, $S$ being the antipode of $H$. Coalgebra morphisms $C\to H$ factor as
  \begin{equation*}
    \begin{tikzpicture}[>=stealth,auto,baseline=(current  bounding  box.center)]
      \path[anchor=base] 
      (0,0) node (l) {$C$}
      +(2,.5) node (u) {$H_{\cat{f}}(C)$}
      +(4,0) node (r) {$H$}
      ;
      \draw[->] (l) to[bend left=6] node[pos=.5,auto] {$\scriptstyle $} (u);
      \draw[->] (u) to[bend left=6] node[pos=.5,auto] {$\scriptstyle \cat{HAlg}\text{ morphism}$} (r);
      \draw[->] (l) to[bend right=6] node[pos=.5,auto,swap] {$\scriptstyle $} (r);
    \end{tikzpicture}
  \end{equation*}
  through the {\it free Hopf algebra} \cite[\S 1]{zbMATH03344702} $H_{\cat{f}}(C)$ on $C$. There is no loss in assuming $H=H_{\cat{f}}(C)$, which we henceforth do. 

  $\Bbbk$-algebra morphisms $H_{\cat{f}}(C)\to A_{\Bbbk'}$ are in bijection \cite[Corollary 10]{zbMATH03344702} with the sequences
    \begin{equation*}
      (x_n)_{n\in \bZ_{\ge 0}}
      \subset
      C^*\otimes A_{\Bbbk'}
      ,\quad
      x_{n+1}
      =
      \begin{cases}
        \text{the inverse }x_n^{-1}\text{ in }C^*\otimes A_{\Bbbk'}&\text{if $n$ is even}\\
        x_n^{-1}\text{ in }C^*\otimes(A_{\Bbbk'})^{op}&\text{otherwise}
      \end{cases}
    \end{equation*}
    The first term $x_0$ of that sequence belongs to some $A_{\Bbbk''}$ for finitely-generated intermediate $\Bbbk''$ as in the statement. That the inverse $x_1=x_0^{-1}$ then also belongs to $A_{\Bbbk''}$ (as do the other $x_n$ afterwards, inductively) follows: multiplication by $x_0$, regarded as a $\Bbbk''$-linear endomorphism, preserves the direct-sum decomposition
    \begin{equation*}
      A_{\Bbbk'}\cong A_{\Bbbk''}\oplus \left(A\otimes_{\Bbbk}\Bbbk''^{\perp}\right)
      \text{ in }
      \tensor*[_{\Bbbk''}]{\cat{Vect}}{}
      ,\quad
      \Bbbk' = \Bbbk''\oplus \Bbbk''^{\perp}      
    \end{equation*}
    for some complementary subspace $\Bbbk''^{\perp}$ of $\Bbbk''\le \Bbbk'$ (i.e. leaves invariant both summands); its inverse must do so as well.
\end{proof}

\begin{lemma}\label{le:presfindual}
  The following conditions on a field extension $\Bbbk\le \Bbbk'$ are equivalent.
  \begin{enumerate}[(a),wide]
  \item\label{item:le:presfindual:alg} The extension is algebraic.

  \item\label{item:le:presfindual:aff} $(-)_{\Bbbk'}$ preserves the formation of finite duals for affine algebras.

  \item\label{item:le:presfindual:kx} $(-)_{\Bbbk'}$ preserves either one of the finite duals $\Bbbk[x]^{\circ}$ or $\Bbbk[x^{\pm 1}]^{\circ}$.
  \end{enumerate}
\end{lemma}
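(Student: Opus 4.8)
The plan is to close the cycle \Cref{item:le:presfindual:alg} $\Rightarrow$ \Cref{item:le:presfindual:aff} $\Rightarrow$ \Cref{item:le:presfindual:kx} $\Rightarrow$ \Cref{item:le:presfindual:alg}. First observe that for any $\Bbbk$-algebra $A$ the canonical comparison $(A^\circ)_{\Bbbk'}\to (A_{\Bbbk'})^\circ$ is the restriction of the natural map $(A^*)_{\Bbbk'}\to (A_{\Bbbk'})^*$, which is injective because $\Bbbk'$ is flat (indeed free) over $\Bbbk$; it does land in the finite dual, since a cofinite-dimensional ideal $I\le A$ extends to the cofinite-dimensional ideal $I_{\Bbbk'}\le A_{\Bbbk'}$ with $A_{\Bbbk'}/I_{\Bbbk'}\cong (A/I)_{\Bbbk'}$. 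Thus ``preservation of finite duals'' is, in every case, equivalent to \emph{surjectivity} of that comparison, which is the only thing to verify. The implication \Cref{item:le:presfindual:aff} $\Rightarrow$ \Cref{item:le:presfindual:kx} is then immediate, as $\Bbbk[x]$ and $\Bbbk[x^{\pm 1}]$ are affine.

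For \Cref{item:le:presfindual:alg} $\Rightarrow$ \Cref{item:le:presfindual:aff} I would argue by descent to a finite subextension. Every $g\in (A_{\Bbbk'})^\circ$ is a matrix coefficient $g(a)=\phi(\rho(a)v)$ of a finite-dimensional $A_{\Bbbk'}$-module $(M,\rho)$ (for instance the left regular representation of the finite-dimensional quotient through which $g$ factors). Fixing finitely many algebra generators $x_1,\dots,x_m$ of the affine algebra $A$ and a $\Bbbk'$-basis of $M$, the finitely many entries of the matrices $\rho(x_i)$ together with the coordinates of $v$ and $\phi$ generate an intermediate field $\Bbbk\le \Bbbk_0\le \Bbbk'$; since $\Bbbk'/\Bbbk$ is \emph{algebraic}, $\Bbbk_0/\Bbbk$ is finite. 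The data then descend, realizing $g$ as the image of a matrix coefficient $g_0\in (A_{\Bbbk_0})^\circ$. It remains to absorb the finite extension $\Bbbk\le \Bbbk_0$: restriction of scalars along $A\to A_{\Bbbk_0}$ turns the module underlying $g_0$ into a finite-dimensional $A$-module, and pairing the $\Bbbk_0$-valued matrix coefficient with a $\Bbbk$-basis of $\mathrm{Hom}_\Bbbk(\Bbbk_0,\Bbbk)$ exhibits $g_0$ as an element of $(A^\circ)_{\Bbbk_0}\subseteq (A^\circ)_{\Bbbk'}$ mapping to $g$ (one may instead cite the finite-extension case recorded in the proof of \Cref{pr:finextcofree}). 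The single delicate point --- and the one I expect to be the main obstacle --- is the descent step, where finite generation of $A$ is exactly what bounds the field of definition by finitely many scalars.

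For \Cref{item:le:presfindual:kx} $\Rightarrow$ \Cref{item:le:presfindual:alg} I would prove the contrapositive via an explicit witness. If $\Bbbk'/\Bbbk$ is not algebraic, choose $t\in \Bbbk'$ transcendental over $\Bbbk$ and consider the evaluation homomorphism $\mathrm{ev}_t\colon \Bbbk'[x]\to \Bbbk'$ (respectively $\Bbbk'[x^{\pm 1}]\to \Bbbk'$, using $t\ne 0$), an element of the relevant finite dual vanishing on the ideal $(x-t)$. Any element in the image of $(\Bbbk[x]^\circ)_{\Bbbk'}$ (resp. $(\Bbbk[x^{\pm 1}]^\circ)_{\Bbbk'}$) vanishes on $(p)$ for some nonzero $p$ with coefficients in $\Bbbk$, whereas $\mathrm{ev}_t$ vanishes on $(p)$ precisely when $p(t)=0$, impossible for transcendental $t$. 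Hence neither comparison map is surjective, so failure of algebraicity defeats both options in \Cref{item:le:presfindual:kx}, completing the cycle. Everything here beyond the descent step is either formal or a direct computation.
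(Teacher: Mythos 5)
Your proof is correct, and its overall architecture coincides with the paper's: the same cycle \Cref{item:le:presfindual:alg} $\Rightarrow$ \Cref{item:le:presfindual:aff} $\Rightarrow$ \Cref{item:le:presfindual:kx} $\Rightarrow$ \Cref{item:le:presfindual:alg}, with \Cref{item:le:presfindual:aff} $\Rightarrow$ \Cref{item:le:presfindual:kx} formal and \Cref{item:le:presfindual:kx} $\Rightarrow$ \Cref{item:le:presfindual:alg} proved by exactly the paper's argument (evaluation at a transcendental $t$ kills no ideal generated by a nonzero $p$ defined over $\Bbbk$). The one genuine divergence is in \Cref{item:le:presfindual:alg} $\Rightarrow$ \Cref{item:le:presfindual:aff}. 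The paper recasts the claim algebra-side: a finite subset $F$ of a finite-dimensional $\Bbbk'$-algebra generates a finite-dimensional $\Bbbk$-subalgebra. It observes that the $\Bbbk'$-coefficients expanding length-$(\ell+1)$ words in $F$ in terms of words of length $\le \ell$ lie in a finite subextension $\widetilde{\Bbbk}$, so $\widetilde{\Bbbk}(F)$ is finite-dimensional over $\widetilde{\Bbbk}$, hence over $\Bbbk$, hence so is its $\Bbbk$-subspace $\Bbbk(F)$. You work functional-side instead: descend a matrix-coefficient presentation of $g\in(A_{\Bbbk'})^{\circ}$ to a finite subextension $\Bbbk_0$ (the matrix entries of the images of the generators, plus the coordinates of $v$ and $\phi$), then absorb $\Bbbk\le\Bbbk_0$ via the dual-basis pairing; that absorption step is sound, since writing $g_0|_A=\sum_j g_j\otimes e_j$ along a $\Bbbk$-basis $(e_j)$ of $\Bbbk_0$ exhibits finitely many functionals $g_j$, each killing the cofinite ideal $J\cap A$ of $A$. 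Both arguments pivot on the identical principle --- finitely many scalars plus algebraicity force a finite field of definition, with affineness supplying the ``finitely many'' --- but the paper's subalgebra formulation buys a shorter endgame (a subspace of a finite-dimensional space is finite-dimensional), whereas your route requires the finite-extension surjectivity of the comparison map as a separate lemma; that lemma is easy and in any case available, being essentially \Cref{pr:finextcofree}, so nothing is lost.
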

\begin{proof}
  \begin{enumerate}[label={},wide]

  \item {\bf \Cref{item:le:presfindual:alg} $\Rightarrow$ \Cref{item:le:presfindual:aff}:} Let $A$ be an affine $\Bbbk$-algebra. Recast, the claim is that any cofinite ideal of $A_{\Bbbk'}$ contains one such ideal for $A\le A_{\Bbbk'}$ (provided $\Bbbk\le \Bbbk'$ is algebraic). Or again: for a finite-dimensional $\Bbbk'$-algebra $B$, a finite subset $F\subseteq B$ generates a finite-dimensional $\Bbbk$-algebra.
    
    In this last formulation, simply note that finite dimensionality means that for some $\ell$ all words of length $>\ell$ in $F$ are $\Bbbk'$-linear combinations of words of length $\le \ell$. The coefficients appearing in such expansions for length-$(\ell+1)$ words are all contained in some {\it finite} intermediate extension $\Bbbk\le \widetilde{\Bbbk}\le \Bbbk'$, so the $\widetilde{\Bbbk}$-algebra $\widetilde{\Bbbk}(F)\le B$ generated by $F$ is certainly finite-dimensional over $\widetilde{\Bbbk}$. But then it is also finite-dimensional over $\Bbbk$, and along with it its $\Bbbk$-subspace $\Bbbk(F)\le \widetilde{\Bbbk}(F)$. 
    
  \item {\bf \Cref{item:le:presfindual:aff} $\Rightarrow$ \Cref{item:le:presfindual:kx}} is self-evident.

  \item {\bf \Cref{item:le:presfindual:kx} $\Rightarrow$ \Cref{item:le:presfindual:alg}:} Consider $t\in \Bbbk'$ transcendental over $\Bbbk$ and observe that the algebra morphism
    \begin{equation*}
      \Bbbk'[x]\cong \Bbbk[x]_{\Bbbk'}
      \ni
      x
      \xmapsto{\quad}
      t
      \in
      \Bbbk'
    \end{equation*}
    does not annihilate any cofinite ideals in $\Bbbk[x]$. For that reason, regarded as an element of $\Bbbk'[x]^{\circ}$, that character cannot belong to $(\Bbbk[x]^{\circ})_{\Bbbk'}$. The argument works just as well for $\Bbbk[x^{\pm 1}]^{\circ}$: pick an {\it invertible} transcendental element $t\in \Bbbk'$.
  \end{enumerate}
\end{proof}

Affineness is essential in \Cref{le:presfindual}, in the proof of the implication \Cref{item:le:presfindual:alg} $\Rightarrow$ \Cref{item:le:presfindual:aff}:

\begin{example}\label{ex:infncpoly}
  No infinite extension $\Bbbk\le \Bbbk'$ (algebraic or not) preserves the formation of finite duals for (commutative or non-commutative) polynomial rings on infinite sets $S$: an algebra morphism $\Bbbk'[S]\to \Bbbk'$ for which the $\Bbbk$-span of $S$ is infinite over $\Bbbk$ cannot annihilate a cofinite ideal of $\Bbbk[S]$. 
\end{example}

A little additional work will bring us full circle back to products, in the following analogue of \Cref{th:finextradj} characterizing {\it algebraic} (as opposed to finite) field extensions.

\begin{theorem}\label{th:algextradj}
  The following conditions on a field extension $\Bbbk\le \Bbbk'$ are equivalent.
  \begin{enumerate}[(a),wide]
  \item\label{item:th:algextradj:fin} The extension is algebraic. 

  \item\label{item:th:algextradj:fdcoalgcont} \Cref{eq:scalextcoalg} preserves finite (or binary) products of finite-dimensional coalgebras. 
    
  \item\label{item:th:algextradj:coalgcont} \Cref{eq:scalextcoalg} is finitely continuous or, equivalently, preserves finite (or binary) products.

  \item\label{item:th:algextradj:coalgpow} \Cref{eq:scalextcoalg} preserves finite (or binary) powers.

  \item\label{item:th:algextradj:bhalgcont} The analogue of \Cref{item:th:algextradj:coalgcont} for categories of bialgebras and/or Hopf algebras.

  \item\label{item:th:algextradj:bhalgpow} The analogue of \Cref{item:th:algextradj:coalgpow} for categories of bialgebras and/or Hopf algebras.
    
  \item\label{item:th:algextradj:bhalgspecpow} $(-)_{\Bbbk'}$ preserves the binary power of some Hopf algebra of finite dimension $>1$.
  \item\label{item:th:algextradj:calgspecpow} $(-)_{\Bbbk'}$ preserves the binary power of some coalgebra of finite dimension $>1$.
  \end{enumerate}
\end{theorem}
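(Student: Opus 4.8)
The plan is to prove the theorem by exhibiting a strongly connected implication graph on \Cref{item:th:algextradj:fin}--\Cref{item:th:algextradj:calgspecpow} whose only substantive arcs are \Cref{item:th:algextradj:fin} $\Rightarrow$ \Cref{item:th:algextradj:coalgcont} and the forcing arcs \Cref{item:th:algextradj:calgspecpow}, \Cref{item:th:algextradj:bhalgspecpow} $\Rightarrow$ \Cref{item:th:algextradj:fin}, with everything else being bookkeeping essentially identical to the proof of \Cref{th:finextradj}. Finite continuity is equivalent to preservation of finite products because equalizers are preserved by every field extension (\cite[Lemma 4.2]{MR4712418}); the equivalences \Cref{item:th:algextradj:coalgcont} $\Leftrightarrow$ \Cref{item:th:algextradj:bhalgcont} and \Cref{item:th:algextradj:coalgpow} $\Leftrightarrow$ \Cref{item:th:algextradj:bhalgpow} hold because finite products of bialgebras or Hopf algebras are computed at the coalgebra level (monadicity of the forgetful functors). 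Condition \Cref{item:th:algextradj:coalgcont} specializes to all of \Cref{item:th:algextradj:fdcoalgcont}, \Cref{item:th:algextradj:coalgpow}, \Cref{item:th:algextradj:bhalgspecpow}, \Cref{item:th:algextradj:calgspecpow}; conversely each of \Cref{item:th:algextradj:fdcoalgcont} and \Cref{item:th:algextradj:coalgpow} already contains the preservation of the binary power of a single finite-dimensional coalgebra of dimension $>1$, hence yields \Cref{item:th:algextradj:calgspecpow}, while \Cref{item:th:algextradj:bhalgpow} likewise yields \Cref{item:th:algextradj:bhalgspecpow}. It thus remains to establish the two substantive arcs.

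For \Cref{item:th:algextradj:fin} $\Rightarrow$ \Cref{item:th:algextradj:coalgcont} I would reuse verbatim the three-step construction of a coalgebra product from the proof of \Cref{th:finextradj} (\Cref{item:th:finextradj:fin} $\Rightarrow$ \Cref{item:th:finextradj:coalgcont}): form the underlying vector-space product $W$, then the cofree coalgebra $C_{\cat{cf}}(W)$, then cut out the product as the common equalizer \Cref{eq:cwcwcc}. The decisive observation is that over a \emph{finite} index set the first step is a finite direct sum, hence preserved by $(-)_{\Bbbk'}$ for \emph{any} extension (scalar extension being cocontinuous on $\cat{Vect}$), and the last step is an equalizer together with a subspace intersection, again preserved by any extension; so the only step imposing a condition on $\Bbbk\le\Bbbk'$ is the preservation of $C_{\cat{cf}}$, which for algebraic extensions is precisely \Cref{th:extprescf}. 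This is exactly why passing from arbitrary to finite products relaxes the hypothesis from ``finite extension'' to ``algebraic extension.''

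The substance is the forcing direction \Cref{item:th:algextradj:calgspecpow} $\Rightarrow$ \Cref{item:th:algextradj:fin} (and \Cref{item:th:algextradj:bhalgspecpow} $\Rightarrow$ \Cref{item:th:algextradj:fin} reduces to it, a Hopf-algebra binary power coinciding with the binary power of the underlying coalgebra). Arguing contrapositively, suppose $\Bbbk\le\Bbbk'$ is not algebraic, so $\Bbbk'\supseteq\Bbbk(t)$ is infinite, and let $C$ be any finite-dimensional coalgebra with $\dim C>1$. Over the infinite field $\Bbbk'$ the construction in the proof of \Cref{th:infdimprod}, valid for coalgebras by \Cref{th:coalgfdprod}, produces, for a suitable fixed dimension $d$, a family of simple $C_{\Bbbk'}\times C_{\Bbbk'}$-comodules parametrized by the $\Bbbk'$-points of a positive-dimensional unirational $\Bbbk$-variety $Q$ — the geometric quotient underlying the double-coset space \Cref{eq:2coset}, whose positive-dimensionality is furnished by the crude estimate \Cref{eq:crudeest}. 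On the other hand, every simple $(C\times C)_{\Bbbk'}$-comodule is a subquotient of $N_{\Bbbk'}$ for a finite-dimensional $(C\times C)$-comodule $N$ defined over $\Bbbk$ (the fact already used in the proof of \Cref{cor:finfld}), and the modulus in $Q$ of such a subquotient is assembled from eigenvalue/invariant data of $\Bbbk$-rational operators on $N$, hence is algebraic over $\Bbbk$. Since $Q$ is positive-dimensional and $\Bbbk\le\Bbbk'$ is transcendental, $Q$ has $\Bbbk'$-points not defined over the relative algebraic closure of $\Bbbk$ in $\Bbbk'$; any such point yields a simple comodule of $C_{\Bbbk'}\times C_{\Bbbk'}$ that is not a subquotient of a base-changed $\Bbbk$-comodule, so the canonical comparison $(C\times C)_{\Bbbk'}\to C_{\Bbbk'}\times C_{\Bbbk'}$ is not surjective and the binary power of $C$ is not preserved.

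I expect this last paragraph to be the main obstacle. The delicate points are the assertion that the moduli realized by subquotients of base-changed $\Bbbk$-comodules stay algebraic over $\Bbbk$, and the complementary fact that a positive-dimensional $\Bbbk$-variety acquires $\Bbbk'$-points with transcendental moduli the instant $\Bbbk\le\Bbbk'$ is transcendental; making the parametrizing ``modulus'' intrinsic (independent of the auxiliary choices $W_i,\varphi_{i,0}$ in \Cref{th:infdimprod}) is where the real care is needed. It is worth recording the dual packaging: for finite-dimensional $C$ one has $C\times C\cong (C^{*}\ast C^{*})^{\circ}$, the finite dual of the free product of dual algebras, so preservation of the binary power of $C$ is equivalent to preservation of the finite dual of the affine algebra $C^{*}\ast C^{*}$, linking the statement to \Cref{le:presfindual}. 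However, since $C^{*}$ is finite-dimensional its image in any polynomial ring is scalar, so $C^{*}\ast C^{*}$ admits neither $\Bbbk[x]$ nor $\Bbbk[x^{\pm1}]$ as a retract; the reduction to \Cref{le:presfindual} is therefore not formal, and the argument must still bottom out in the positive-dimensional family above.
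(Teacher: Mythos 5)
Your implication bookkeeping and your proof of \Cref{item:th:algextradj:fin} $\Rightarrow$ \Cref{item:th:algextradj:coalgcont} are sound; the latter genuinely differs from the paper, which instead proves \Cref{item:th:algextradj:fin} $\Rightarrow$ \Cref{item:th:algextradj:fdcoalgcont} via the finite-dual formula \Cref{eq:prodfdcoalg} and \Cref{le:presfindual} and then deduces \Cref{item:th:algextradj:coalgcont} by filtered-colimit approximation, whereas your route through \Cref{th:extprescf} and the three-step construction from \Cref{th:finextradj} is a valid (non-circular) alternative. The genuine gap is the forcing arc \Cref{item:th:algextradj:calgspecpow} $\Rightarrow$ \Cref{item:th:algextradj:fin}, at exactly the two points you flag but do not resolve. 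First, the double-coset space \Cref{eq:2coset} over $\Bbbk'$ is built from chosen simple $C_{\Bbbk'}$-comodules $W_{\alpha},W_{\beta}$, and these need not descend to the relative algebraic closure $\Bbbk_0$ of $\Bbbk$ in $\Bbbk'$: a simple module over $R_{\Bbbk'}$, with $R$ a finite-dimensional $\Bbbk$-algebra, can fail to have a $\Bbbk_0$-form --- take $R$ a quaternion division algebra over $\Bbbk=\bQ$ and $\Bbbk'$ the function field of the associated conic, so that $R_{\Bbbk'}\cong M_2(\Bbbk')$ splits even though $\bQ$ is algebraically closed in $\Bbbk'$. Your moduli space $Q$ is therefore a priori only a $\Bbbk'$-variety, and the pivotal phrase ``$\Bbbk'$-points of $Q$ not defined over the relative algebraic closure'' does not yet make sense. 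Second, even granting a $\Bbbk_0$-structure on $Q$, the claim that simple subquotients of base-changed comodules $N_{\Bbbk'}$ hit only algebraic points is precisely what would need proof; the ``eigenvalue data of $\Bbbk$-rational operators'' heuristic is convincing in pointed situations such as \Cref{ex:infdih}, where the modulus literally is an eigenvalue, but it is not established for the general double coset, where the modulus depends on the auxiliary choices.

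The missing idea --- and the paper's actual argument --- is that no geometry is needed here at all. By \Cref{eq:prodfdcoalg} the binary power of a finite-dimensional $C$ is $\left(A\coprod A\right)^{\circ}$ with $A:=C^*$, so preservation of the power amounts to preservation of this finite dual; for transcendental $\Bbbk\le\Bbbk'$ the paper exhibits an explicit algebra morphism $A\coprod A\to \End(A)_{\Bbbk'}$ that factors through no finite-dimensional $\Bbbk$-subalgebra: the left regular representation on one free factor, and its conjugate by an explicit $t$-dependent invertible matrix on the other. A dichotomy (either $A$ acquires a nonzero square-zero element $x$ after a harmless finite extension, in which case one arranges the conjugate so the powers of the image of $x\cdot x$ span $\spn\{(xx')^nt^n\}_n$; or $A$ is split semisimple, in which case one computes with two explicit $2\times 2$ idempotents whose difference is triangular with $\pm t$ on the diagonal) makes the infinite $\Bbbk$-span immediate. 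In particular your closing assessment that the finite-dual packaging ``must still bottom out in the positive-dimensional family above'' is mistaken: one needs neither a retract copy of $\Bbbk[x]$ or $\Bbbk[x^{\pm 1}]$ inside $A\coprod A$ nor \Cref{le:presfindual} --- the free product has enough room for a direct transcendence-detecting pair of representations, and that elementary construction replaces the entire moduli argument.
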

\begin{proof}
  `Finite' and `binary' are interchangeable throughout by induction, and the conditions in \Cref{item:th:algextradj:coalgcont} are equivalent by \cite[Proposition 13.3]{ahs} given that, as noted previously in the proof of \Cref{th:finextradj}, equalizers are preserved regardless of the field extension. Immediate implications include
  \begin{equation*}
    \begin{tikzpicture}[>=stealth,auto,baseline=(current  bounding  box.center)]
      \path[anchor=base] 
      (0,0) node (fcont) {\Cref{item:th:algextradj:coalgcont}}
      +(2,.5) node (fpow) {\Cref{item:th:algextradj:coalgpow}}
      +(2,-.5) node (fcontbh) {\Cref{item:th:algextradj:bhalgcont}}
      +(4,0) node (fpowbh) {\Cref{item:th:algextradj:bhalgpow}}
      +(6,0) node (2pow1h) {\Cref{item:th:algextradj:bhalgspecpow}}
      +(8,0) node (2pow1c) {\Cref{item:th:algextradj:calgspecpow}}
      +(-2,.5) node (fdcoalg) {\Cref{item:th:algextradj:fdcoalgcont}}
      ;
      \draw[-implies,double equal sign distance] (fcont) to[bend left=6] node[pos=.5,auto] {$\scriptstyle \text{obvious}$} (fpow);
      \draw[-implies,double equal sign distance] (fpow) to[bend left=6] node[pos=.5,auto] {$\scriptstyle \text{same products}$} (fpowbh);
      \draw[-implies,double equal sign distance] (fcont) to[bend right=6] node[pos=.5,auto,swap] {$\scriptstyle \text{same products}$} (fcontbh);
      \draw[-implies,double equal sign distance] (fcontbh) to[bend right=6] node[pos=.5,auto,swap] {$\scriptstyle \text{obvious}$} (fpowbh);
      \draw[-implies,double equal sign distance] (fpowbh) to[bend right=0] node[pos=.5,auto] {$\scriptstyle \text{obvious}$} (2pow1h);
      \draw[-implies,double equal sign distance] (2pow1h) to[bend right=0] node[pos=.5,auto] {$\scriptstyle \text{obvious}$} (2pow1c);
      \draw[-implies,double equal sign distance] (fcont) to[bend right=6] node[pos=.5,auto] {$\scriptstyle \text{obvious}$} (fdcoalg);
    \end{tikzpicture}
  \end{equation*}
  A few others will complete the proof.

  \begin{enumerate}[label={},wide]
  \item {\bf \Cref{item:th:algextradj:fdcoalgcont} $\Rightarrow$ \Cref{item:th:algextradj:coalgcont}} follows from the fact \cite[Theorem 5.1.1]{mont} that every coalgebra $C$ is a filtered colimit $C\cong \varinjlim C'$ of its finite-dimensional subcoalgebras, along with the commutation \cite[Proposition 1.59]{ar}, given the local finite presentability of $\cat{Coalg}$ recalled in the proof of \Cref{le:cfbuildup} of finite limits and filtered colimits:
    \begin{equation*}
      \varinjlim_{\substack{C'_i\le C_i\\\dim C'_i<\infty}}\prod_i^{\text{finite}}C'_i
      \xrightarrow[\cong]{\quad\text{canonical}\quad}
      \prod_i \left(\varinjlim C'_i\right)
      \cong
      \prod_i C_i.
    \end{equation*}

  \item {\bf \Cref{item:th:algextradj:fin} $\Rightarrow$ \Cref{item:th:algextradj:fdcoalgcont}:} For finite-dimensional coalgebras $C$ the unit morphism $C\to C^{*\circ}$ attached to the contravariant right-hand adjunction (\cite[Theorem 1.5.22]{dnr}, \cite[Theorem 6.0.5]{swe})
    \begin{equation*}
      \begin{tikzpicture}[>=stealth,auto,baseline=(current  bounding  box.center)]
        \path[anchor=base] 
        (0,0) node (l) {$\cat{Coalg}$}
        +(4,0) node (r) {$\cat{Alg}$}
        ;
        \draw[->] (l) to[bend left=6] node[pos=.5,auto] {$\scriptstyle *$} (r);
        \draw[->] (r) to[bend left=6] node[pos=.5,auto] {$\scriptstyle \circ$} (l);
      \end{tikzpicture}
    \end{equation*}
    is an isomorphism. Contravariant right adjoints convert coproducts into products, so that
    \begin{equation}\label{eq:prodfdcoalg}
      \dim C_i<\infty
      \xRightarrow{\quad}
      \prod^{\cat{Coalg}}_i C_i
      \cong
      \left(\coprod^{\cat{Alg}}_i C_i^*\right)^{\circ}.
    \end{equation}
    It remains to observe that arbitrary scalar extensions preserve algebra coproducts and duality for finite-dimensional vector spaces, whereas {\it algebraic} scalar extensions preserve finite duals of affine algebras by \Cref{le:presfindual}.
    
  \item {\bf \Cref{item:th:algextradj:calgspecpow} $\Rightarrow$ \Cref{item:th:algextradj:fin}:} \Cref{eq:prodfdcoalg} applies to the binary power of a finite-dimensional $C\in \cat{Coalg}$. The claim is thus that, given
    \begin{equation*}
      (C^*=)A\in \cat{Alg}
      ,\quad
      1<\dim A<\infty,
    \end{equation*}
    preservation by $(-)_{\Bbbk}$ of the finite dual $\left(A\coprod A\right)^{\circ}$ forces $\Bbbk\le \Bbbk'$ to be algebraic. Rephrased, the goal is to show that whenever $\Bbbk\le \Bbbk'$ is transcendental there is a morphism
    \begin{equation}\label{eq:aa2b}
      A\coprod A
      \xrightarrow[\quad\text{in $\tensor*[_{\Bbbk}]{\cat{Alg}}{}$}\quad]{\quad\varphi_{\ell,r}\text{ for `left' and `right'}\quad}
      B_{\Bbbk'}
      \quad\text{for}\quad
      B\in \tensor*[_{\Bbbk}]{\cat{Alg}}{}
      ,\quad
      \dim B<\infty
    \end{equation}
    which does {\it not} factor through a finite-dimensional $\Bbbk$-subalgebra of $B_{\Bbbk'}$. Take for $B$ the matrix algebra $\End(A)\cong M_{\dim A}(\Bbbk)$. The argument bifurcates, referring to a transcendental $t\in \Bbbk'$ fixed throughout.

    \begin{enumerate}[(I),wide]
    \item {\bf $A$ contains, possibly after an appropriate (harmless) finite extension of $\Bbbk$, a non-zero nilpotent element.} It then also contains a square-zero $0\ne x\in A$. Fix a square-zero $0\ne x\in A$. $\varphi_{\ell}$ will simply be the left regular self-action
      \begin{equation*}
        A
        \ni a
        \xmapsto{\quad}
        (a\cdot)
        \in
        B=\End(A),
      \end{equation*}
      whereas $\varphi_r$ will be the same embedding followed by a conjugation in $B_{\Bbbk(t)}\cong M_{\dim A}(\Bbbk(t))$. Choosing that conjugation so that $\varphi_r(x)=x't$ with
      \begin{equation*}
        x'\in B=M_{\dim A}(\Bbbk)
        ,\quad                
        xx'\text{ is not nilpotent},
      \end{equation*}
      the images through the corresponding \Cref{eq:aa2b} of the powers of
      \begin{equation*}
        \left(\text{left-hand copy of }x\right)\cdot \left(\text{right-hand copy of }x\right)\in A\coprod A
      \end{equation*}
      have infinite-dimensional $\Bbbk$-span $\spn\left\{\left(xx'\right)^nt^n\right\}_n$.

    \item {\bf No finite extensions of $\Bbbk$ produce non-zero nilpotent elements in $A$.} We can then assume (again, perhaps after a scalar extension) that $A$ is a finite product of (at least two) copies of $\Bbbk$. Focusing on $\varphi_{\ell,r}$ that factor through some quotient $A\xrightarrowdbl{} \Bbbk^2$, we may as well take $A=\Bbbk^2$; algebra morphisms \Cref{eq:aa2b}, then, simply pick out idempotents $\varphi_{\ell,r}(x)$, $x:=(1,0)\in \Bbbk^2$. Define
      \begin{equation*}
        \begin{tikzpicture}[>=stealth,auto,baseline=(current  bounding  box.center)]
          \path[anchor=base] 
          (0,0) node (l) {$x$}
          +(6,.5) node (u) {$
            \begin{pmatrix}
              1&0\\
              1&0
            \end{pmatrix}
            $}
          +(6,-.5) node (d) {$
            \begin{pmatrix}
              1&t\\
              0&1
            \end{pmatrix}
            \cdot
            \begin{pmatrix}
              1&0\\
              1&0
            \end{pmatrix}
            \cdot
            \begin{pmatrix}
              1&-t\\
              0&\phantom{-}1
            \end{pmatrix}
            =
            \begin{pmatrix}
              1+t&-t+t^2\\
              1&-t
            \end{pmatrix}
            $}
          ;
          \draw[|->] (l) to[bend left=6] node[pos=.5,auto] {$\scriptstyle \varphi_{\ell}$} (u);
          \draw[|->] (l) to[bend right=6] node[pos=.5,auto,swap] {$\scriptstyle \varphi_{r}$} (d);
        \end{tikzpicture}
      \end{equation*}
      We thus have
      \begin{equation*}
        (\text{right-hand $x$})-(\text{left-hand $x$})
        \xmapsto{\quad\text{\Cref{eq:aa2b}}\quad}
        \begin{pmatrix}
          1+t&-t+t^2\\
          1&-t
        \end{pmatrix}
        -
        \begin{pmatrix}
          1&0\\
          1&0
        \end{pmatrix}
        =
        \begin{pmatrix}
          t&-t+t^2\\
          0&-t
        \end{pmatrix},
      \end{equation*}
      an element whose powers are plainly have infinite $\Bbbk$-span.
    \end{enumerate}
  \end{enumerate}
\end{proof}


We record the following description of the cofree Hopf algebra on a finite-dimensional algebra.

\begin{proposition}\label{pr:hcffdalg}
  In the diagram
  \begin{equation*}
    \begin{tikzpicture}[>=stealth,auto,baseline=(current  bounding  box.center)]
      \path[anchor=base] 
      (0,0) node (l) {$H_{\cat{cf}}(A)$}
      +(3,.5) node (u) {$B_{\cat{cf}}(A)\cong T(A^*)^{\circ}$}
      +(6,0) node (r) {$A$,}
      ;
      \draw[->] (l) to[bend left=6] node[pos=.5,auto] {$\scriptstyle $} (u);
      \draw[->] (u) to[bend left=6] node[pos=.5,auto] {$\scriptstyle $} (r);
      \draw[->] (l) to[bend right=6] node[pos=.5,auto,swap] {$\scriptstyle $} (r);
    \end{tikzpicture}
  \end{equation*}
  relating the cofree bialgebra and Hopf algebra on a finite-dimensional $\Bbbk$-algebra $A$ the left-hand arrow is the finite dual
  \begin{equation*}
    H_{\cat{F}}(T(A^*))^{\circ}
    \xrightarrow{\quad}
    T(A^*)^{\circ}
  \end{equation*}
  of the canonical map $T(A^*)\to H_{\cat{F}}(T(A^*))$ of the bialgebra $T(A^*)$ into its Hopf envelope. 
\end{proposition}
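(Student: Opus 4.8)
The plan is to prove the identification by Yoneda, showing that $H_{\cat{F}}(T(A^*))^{\circ}$ represents the same functor on $\cat{HAlg}$ as $H_{\cat{cf}}(A)$, and then checking that the isomorphism carries the asserted map to the canonical one. The one ingredient doing the real work is the \emph{symmetry of the finite-dual pairing}: for bialgebras $B,B'$ there is a natural bijection
\begin{equation*}
  \cat{BiAlg}(B, B'^{\circ}) \cong \cat{BiAlg}(B', B^{\circ}),
\end{equation*}
both sides being the set of bialgebra pairings $B\otimes B'\to\Bbbk$ (bilinear forms pairing the multiplication of one factor against the comultiplication of the other, and vice versa). I would first record this; the only point needing a word is that a bialgebra map $B\to B'^{\circ}$ really does yield, read from the $B'$-side, a map landing in the finite dual $B^{\circ}$, which is immediate since the relation $\langle b, b'_1b'_2\rangle=\langle b_{(1)},b'_1\rangle\langle b_{(2)},b'_2\rangle$ exhibits each functional $\langle b,-\rangle$ as killing a cofinite ideal. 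Between Hopf algebras a bialgebra map automatically respects antipodes, so the same symmetry holds verbatim in $\cat{HAlg}$.

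With that in hand, take $A$ finite-dimensional, so (as recalled in the proof of \Cref{pr:finextcofree}) $B_{\cat{cf}}(A)\cong T(A^*)^{\circ}$ as bialgebras, where $T(A^*)$ carries the tensor-bialgebra structure of \cite[\S VI]{swe} built from the dual \emph{coalgebra} $A^*$; this is genuinely a bialgebra and not already a Hopf algebra (for $A=\Bbbk^2$ it is the free algebra on two grouplikes), so $H_{\cat{F}}(T(A^*))$ is a nontrivial Hopf envelope. For an arbitrary Hopf algebra $K$ I would then run
\begin{align*}
  \cat{HAlg}(K, H_{\cat{F}}(T(A^*))^{\circ})
  &\cong \cat{BiAlg}(K, H_{\cat{F}}(T(A^*))^{\circ})
  \cong \cat{BiAlg}(H_{\cat{F}}(T(A^*)), K^{\circ}) \\
  &\cong \cat{HAlg}(H_{\cat{F}}(T(A^*)), K^{\circ})
  \cong \cat{BiAlg}(T(A^*), K^{\circ}) \\
  &\cong \cat{BiAlg}(K, T(A^*)^{\circ})
  = \cat{BiAlg}(K, B_{\cat{cf}}(A)) \\
  &\cong \cat{Alg}(K, A)
  \cong \cat{HAlg}(K, H_{\cat{cf}}(A)),
\end{align*}
where the first and third isomorphisms use that bialgebra maps between Hopf algebras are Hopf maps, the second and fifth are the pairing symmetry, the fourth is the universal property of the Hopf envelope ($H_{\cat{F}}\dashv\cat{incl}$), and the last two are the universal properties of the cofree bialgebra and cofree Hopf algebra on the algebra $A$ (all the adjunctions assembled in \Cref{pr:finextcofree} and the proof of \Cref{th:psdpull}). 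The composite being natural in $K$, Yoneda delivers $H_{\cat{F}}(T(A^*))^{\circ}\cong H_{\cat{cf}}(A)$.

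It then remains to identify the structure map. Since $H_{\cat{cf}}(A)=H_{\cat{cf}}(B_{\cat{cf}}(A))$, the canonical arrow $H_{\cat{cf}}(A)\to B_{\cat{cf}}(A)$ is the counit of $\cat{incl}\dashv H_{\cat{cf}}$ at $B_{\cat{cf}}(A)$. The finite-dual functor $(-)^{\circ}$ is contravariantly self-adjoint (by the pairing symmetry) and commutes with $\cat{incl}\colon\cat{HAlg}\to\cat{BiAlg}$, so it intertwines the envelope adjunction $H_{\cat{F}}\dashv\cat{incl}$ with the cofree adjunction $\cat{incl}\dashv H_{\cat{cf}}$; under such an intertwining the unit $T(A^*)\to H_{\cat{F}}(T(A^*))$ is sent precisely to the counit at $T(A^*)^{\circ}=B_{\cat{cf}}(A)$, i.e.\ to the asserted map. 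Concretely this amounts to tracing $\mathrm{id}$ through the displayed chain.

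I expect the genuine obstacle to lie not in the isomorphism — which is the formal chain above — but in this last compatibility: making ``unit $\mapsto$ counit under the self-dual $(-)^{\circ}$'' precise without circularity, and verifying that the bialgebra structure on $T(A^*)$ entering $B_{\cat{cf}}(A)=T(A^*)^{\circ}$ is exactly the one whose Hopf envelope is taken, so that the two occurrences of $T(A^*)$ in the chain coincide on the nose rather than merely up to the ambiguity of the tensor-bialgebra structure.
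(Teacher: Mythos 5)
Your proof is correct and takes essentially the same route as the paper's: both hinge on the contravariant self-adjointness of $(-)^{\circ}$ on $\cat{BiAlg}$ and $\cat{HAlg}$ via bialgebra pairings (\cite[Proposition 7.7.5]{rad} in the paper) together with uniqueness of composed adjoints, which is exactly how you identify the structure map $H_{\cat{cf}}(A)\to B_{\cat{cf}}(A)$ with the finite dual of the unit $T(A^*)\to H_{\cat{F}}(T(A^*))$ — your Yoneda chain is just a hands-on unwinding of those same adjunctions. One index slip in your aside: to see that the transpose of a bialgebra map $B\to B'^{\circ}$ lands in $B^{\circ}$ you need the \emph{algebra-map} relation $\langle b_1b_2,b'\rangle=\langle b_1,b'_{(1)}\rangle\langle b_2,b'_{(2)}\rangle$, which exhibits $m_B^*\langle -,b'\rangle\in B^*\otimes B^*$, whereas the relation you display only re-proves the already-given fact that each $\langle b,-\rangle$ lies in $B'^{\circ}$.
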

\begin{proof}
  The bialgebra $T(A^*)$ in the statement carries the tensor algebra structure and the coalgebra structure induced by that of $A^*$. That the cofree bialgebra is indeed $T(A^*)^{\circ}$ follows from \cite[pp.134-135]{swe}, given that by (the proof of) \cite[Theorem 6.4.1]{swe} this is also the cofree coalgebra on the finite-dimensional vector space $A\cong A^{**}$. 

  Note next that $(-)^{\circ}$ induces a contravariant self-adjoint on the right on both categories $\cat{BiAlg}$ and $\cat{HAlg}$: the natural bijection
  \begin{equation*}
    \cat{BiAlg}(H,K^{\circ})
    \cong
    \cat{BiAlg}(K,H^{\circ})
    ,\quad
    H,K
    \in
    \cat{BiAlg}
    \ \text{or}\ 
    \cat{HAlg}
  \end{equation*}
  follows essentially from \cite[Proposition 7.7.5]{rad}, which identifies both spaces with that of {\it bialgebra (or Hopf) pairings} (\cite[Definition 7.7.6]{rad}, \cite[\S 4.1 D, pp.114-115]{cp_qg}) $H\times K\to \Bbbk$. Said self-adjoint $(-)^{\circ}$ also intertwines the forgetful functor from Hopf algebras to bialgebras, hence the commutativity up to natural isomorphism of (the northwestward branch of)
  \begin{equation*}
    \begin{tikzpicture}[>=stealth,auto,baseline=(current  bounding  box.center)]
      \path[anchor=base] 
      (0,0) node (l) {$\cat{HAlg}$}
      +(2,.5) node (u) {$\cat{HAlg}$}
      +(2,-.5) node (d) {$\cat{BiAlg}$}
      +(4,0) node (r) {$\cat{BiAlg}$}
      +(3,.7) node (ut) {\rotatebox[origin=c]{-10}{$\top$}}
      +(1,-.7) node (dt) {\rotatebox[origin=c]{-10}{$\top$}}
      ;
      \draw[->] (l) to[bend left=6] node[pos=.5,auto] {$\scriptstyle \circ$} (u);
      \draw[->] (u) to[bend left=6] node[pos=.5,auto] {$\scriptstyle $} (r);
      \draw[->] (l) to[bend right=6] node[pos=.5,auto,swap] {$\scriptstyle $} (d);
      \draw[->] (d) to[bend right=6] node[pos=.5,auto,swap] {$\scriptstyle \circ$} (r);
      \draw[->] (r) to[bend right=60] node[pos=.5,auto,swap] {$\scriptstyle H_{\cat{cf}}$} (u);
      \draw[->] (d) to[bend left=60] node[pos=.5,auto] {$\scriptstyle H_{\cat{f}}$} (l);
    \end{tikzpicture}
  \end{equation*}
  More elaborately (but perhaps also more transparently), given \cite[\S IV.8, Theorem 1]{mcl_2e} that the adjoint of a composition of functors is the composition of the corresponding adjoints, we have adjunctions
  \begin{equation*}
    \begin{tikzpicture}[>=stealth,auto,baseline=(current  bounding  box.center)]
      \path[anchor=base] 
      (0,0) node (1b) {$\cat{BiAlg}$}
      +(2,1) node (1h) {$\cat{HAlg}$}
      +(4,0) node (1ho) {$\cat{HAlg}^{op}$}

      +(0,-1) node (2b) {$\cat{BiAlg}$}
      +(2,0) node (2h) {$\cat{HAlg}$}
      +(4,-1) node (2ho) {$\cat{HAlg}^{op}$}

      +(2,-2) node (3bo) {$\cat{BiAlg}^{op}$}

      +(0,-2) node (4b) {$\cat{BiAlg}$}
      +(2,-3) node (4bo) {$\cat{BiAlg}^{op}$}
      +(4,-2) node (4ho) {$\cat{HAlg}^{op}$}
      
      +(1,.2) node () {$\bot$}
      +(3,.2) node () {$\bot$}      

      +(2,-1) node () {\rotatebox[origin=c]{90}{$\cong$}}

      +(1,-2.2) node () {$\top$}
      +(3,-2.2) node () {$\top$}
      ;
      \draw[->] (1b) to[bend left=6] node[pos=.5,auto] {$\scriptstyle H_{\cat{F}}$} (1h);
      \draw[->] (1h) to[bend left=6] node[pos=.5,auto] {$\scriptstyle \circ$} (1ho);

      \draw[<-] (2b) to[bend left=6] node[pos=.5,auto,swap] {$\scriptstyle \cat{fgt}$} (2h);
      \draw[<-] (2h) to[bend left=6] node[pos=.5,auto,swap] {$\scriptstyle \circ$} (2ho);

      \draw[<-] (2b) to[bend right=6] node[pos=.5,auto] {$\scriptstyle \circ$} (3bo);
      \draw[<-] (3bo) to[bend right=6] node[pos=.5,auto] {$\scriptstyle \cat{fgt}$} (2ho);

      \draw[->] (4b) to[bend right=6] node[pos=.5,auto,swap] {$\scriptstyle \circ$} (4bo);
      \draw[->] (4bo) to[bend right=6] node[pos=.5,auto,swap] {$\scriptstyle H_{\cat{cf}}$} (4ho);
    \end{tikzpicture}
  \end{equation*}
  so that the top and bottom must be naturally isomorphic; $(-)^{\circ}$, in other words, intertwines the free and cofree Hopf algebra constructions. This, though, is precisely the sought-after claim in our particular setup: that $(-)^{\circ}$ turns the free Hopf algebra structure map $B\to H_{\cat{F}}(B)$ into the cofree counterpart $H_{\cat{F}}(B)^{\circ}\to B^{\circ}$ thereof with $B=T(A^*)^{\circ}$.
\end{proof}

\begin{remarks}\label{res:freebialg}
  \begin{enumerate}[(1),wide]
  \item\label{item:res:freebialg:twcirc} The principle underlying \Cref{pr:hcffdalg} also fits the isomorphism $C_{\cat{cf}}(V^{**})\cong T(V^*)^{\circ}$ of \cite[Lemma 1.6.4]{dnr} into the broader observation that $C_{\cat{cf}}(W^*)\cong T(W)^{\circ}$: the latter is (the image of $W$ through) the composition
    \begin{equation*}
      \cat{Vect}
      \xrightarrow{\quad T\quad}
      \cat{Alg}
      \xrightarrow{\quad \circ\quad}
      \cat{Coalg}^{op}
    \end{equation*}
    of left adjoints; the former instead chains together
    \begin{equation*}
      \cat{Vect}
      \xrightarrow{\quad *\quad}
      \cat{Vect}^{op}
      \xrightarrow{\quad C_{\cat{cf}}\quad}
      \cat{Coalg}^{op}
      ,
    \end{equation*}
    again left adjoints. The common right adjoint to the two compositions is
    \begin{equation*}
      \begin{tikzpicture}[>=stealth,auto,baseline=(current  bounding  box.center)]
        \path[anchor=base] 
        (0,0) node (l) {$\cat{Vect}$}
        +(2,.5) node (u) {$\cat{Alg}$}
        +(2,-.5) node (d) {$\cat{Vect}^{op}$}
        +(4,0) node (r) {$\cat{Coalg}^{op}$.}
        +(2,0) node () {\rotatebox[origin=c]{90}{$\cong$}}
        ;
        \draw[<-] (l) to[bend left=6] node[pos=.5,auto] {$\scriptstyle \cat{fgt}$} (u);
        \draw[<-] (u) to[bend left=6] node[pos=.5,auto] {$\scriptstyle *$} (r);
        \draw[<-] (l) to[bend right=6] node[pos=.5,auto,swap] {$\scriptstyle *$} (d);
        \draw[<-] (d) to[bend right=6] node[pos=.5,auto,swap] {$\scriptstyle \cat{fgt}$} (r);
      \end{tikzpicture}
    \end{equation*}

  \item\label{item:res:freebialg:hfhf} Dual to the phenomenon recalled in \Cref{re:monadlift}, of the cofree coalgebra $C_{\cat{cf}}(A)$ on an algebra $A$ inheriting a natural (bi)algebra structure, the tensor algebra $T(C)$ on a coalgebra is nothing but the free bialgebra on $C$ (\cite[Theorem 5.3.1]{rad}, \cite[Proposition 3.2.4 and Exercise (1) following it]{swe}): the result of applying to $C$ the left adjoint of $\cat{BiAlg}\to\cat{Coalg}$.
    
    In light of this, the free Hopf algebra $H_{\cat{f}}(T(A^*))$ of \Cref{pr:hcffdalg} is also the free Hopf algebra on the coalgebra $A^*$ (introduced in \cite{zbMATH03344702}; see also \cite[\S 7.5]{rad}): the image of $C$ through chaining together the left adjoints of the two composable functors in
    \begin{equation*}
      \begin{tikzpicture}[>=stealth,auto,baseline=(current  bounding  box.center)]
        \path[anchor=base] 
        (0,0) node (l) {$\cat{HAlg}$}
        +(2,.5) node (u) {$\cat{BiAlg}$}
        +(4,0) node (r) {$\cat{Coalg}$,}
        ;
        \draw[->] (l) to[bend left=6] node[pos=.5,auto] {$\scriptstyle \cat{fgt}$} (u);
        \draw[->] (u) to[bend left=6] node[pos=.5,auto] {$\scriptstyle \cat{fgt}$} (r);
        \draw[->] (l) to[bend right=6] node[pos=.5,auto,swap] {$\scriptstyle \cat{fgt}$} (r);
      \end{tikzpicture}
    \end{equation*}
    producing the left adjoint of the composite. 
  \end{enumerate}
\end{remarks}

An illustration of \Cref{pr:hcffdalg}:

\begin{example}\label{ex:laurent}
  Set $A:=\Bbbk^S$ for a finite set $S$. The coalgebra $A^*$ is then the span of $|S|$ {\it grouplikes} \cite[Definition 1.3.4(a)]{mont} and $T(A^*)\cong \Bbbk\braket{\cS}$, the free algebra on said grouplike elements (with the bialgebra structure resulting therefrom). The free Hopf algebra on $\Bbbk\braket{\cS}$ is easily seen to be what we will denote by $\Bbbk\braket{\cS^{\pm 1}}$: the non-commutative Laurent polynomial algebra, obtained by formally adjoining inverses to every $s\in S$.

  The cofree Hopf algebra map $\Bbbk\braket{\cS^{\pm 1}}^{\circ} \to \Bbbk\braket{\cS}^{\circ}$ of \Cref{pr:hcffdalg} is an embedding: if a functional $\Bbbk\braket{\cS^{\pm 1}}\to \Bbbk$ annihilates all of $\Bbbk\braket{\cS}$ and also a cofinite ideal, it must annihilate arbitrary monomials
  \begin{equation}\label{eq:skek}
    s_1^{e_1}\cdots s_n^{e_n}
    ,\quad
    s_k\in \cS
    ,\quad
    e_k\in \bZ
  \end{equation}
  (and hence vanish identically) by induction on the absolute value of the smallest exponent $e_k<0$. Suppose, say, that $e_k<0$, $f$'s vanishing on a cofinite ideal implies that it annihilates elements of the form
  \begin{equation*}
    \bullet\cdot p(s)\cdot \bullet'
    ,\quad
    \bullet,\bullet'\in \Bbbk{\cS^{\pm 1}}
    \quad\text{and some polynomial}\quad
    p\in\Bbbk[x].
  \end{equation*}
  \Cref{eq:skek} can thus be expressed as a linear combination of analogous terms with strictly larger exponents in place of $e_k$, completing the induction step. 
\end{example}

\begin{remarks}\label{res:1varrec}
  \begin{enumerate}[(1),wide]
  \item\label{item:res:1varrec:stillks} The isomorphism $B_{\cat{cf}}(\Bbbk^\cS)\cong \Bbbk\braket{\cS}^{\circ}$ is valid generally, regardless of whether $\cS$ is finite (as it was in \Cref{ex:laurent}): $\Bbbk^\cS\cong (\Bbbk^{\oplus \cS})^*$, and \Cref{res:freebialg}\Cref{item:res:freebialg:twcirc} applies. 

  \item\label{item:res:1varrec:s1} The case $|\cS|=1$ of \Cref{ex:laurent} features prominently in \cite{zbMATH04204425}. Switching to the more common notation $\Bbbk[x]$ for the single $x\in \cS$, \cite[\S 2]{zbMATH04204425} identifies $\Bbbk[x^{\pm 1}]^{\circ}$ as a Hopf subalgebra of the bialgebra $\Bbbk[x]^{\circ}$ of {\it linearly recursive sequences} \cite[\S 1.1.1]{epsw_rec}: a functional $\Bbbk[x]\xrightarrow{f} \Bbbk$ belongs to the finite dual precisely when the corresponding sequence
  \begin{equation*}
    (f_n)_{n\in \bZ_{\ge 0}}
    ,\quad
    f_n:=f(x^n)
  \end{equation*}
  satisfies a linear recurrence. The Hopf subalgebra $\Bbbk[x^{\pm 1}]^{\circ}$ consists of those sequences that extend across all of $\bZ$, satisfying the linear recurrence throughout. Per \Cref{pr:hcffdalg}, that Hopf subalgebra is in fact the {\it universal} bialgebra morphism received by $\Bbbk[x]^{\circ}$ from a Hopf algebra, so is in particular also the {\it largest} Hopf subalgebra of the bialgebra of linearly recursive sequences. 
\end{enumerate}
\end{remarks}

A small subtlety present in the proof of \Cref{th:extprescf} (specifically, the implication \Cref{item:th:extprescf:alg} $\Rightarrow$ \Cref{item:th:extprescf:alg}) suggests a side-note:
\begin{itemize}[wide]
\item On the one hand, \Cref{le:presfindual} handles, for algebraic $\Bbbk\le \Bbbk'$, preservation by $(-)_{\Bbbk'}$ of finite duals for {\it affine} algebras.

\item On the other hand, affineness cannot simply be dropped in general (\Cref{ex:infncpoly}).

\item The issue of applying \Cref{le:presfindual} in the course of the proof of the aforementioned implication suggests itself, and with it so does the question of when the free Hopf algebra $H_{\cat{f}}(C)$ on a finite-dimensional coalgebra is affine. Such $C$, it turns out, can be classified. 
\end{itemize}

\begin{proposition}\label{pr:whenhfaff}
  The free Hopf algebra $H_{\cat{f}}(C)$ on a finite-dimensional $\Bbbk$-coalgebra $C$ is $\Bbbk$-affine if and only if the scalar extension $C_{\overline{\Bbbk}}$ to the algebraic closure is pointed. 
\end{proposition}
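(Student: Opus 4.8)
The plan is to prove both implications after reducing to an algebraically closed base field, and then to read off affineness from the behaviour of the antipode iterates on $C$. The key recurring fact is that $H_{\cat{f}}(C)$ is generated as an algebra by the antipode iterates $\bigcup_{k\ge 0}S^{k}(C)$ \cite{zbMATH03344702}, so that affineness is equivalent to stabilization of the ascending chain $A_{N}:=\Bbbk\langle S^{k}(C):k\le N\rangle$, and in particular is implied by finite-dimensionality of $\sum_{k}S^{k}(C)$. For the base change, Takeuchi's construction \cite{zbMATH03344702} builds $H_{\cat{f}}(C)$ from countably many copies of $C$ by forming a tensor algebra and quotienting by the relations that force an antipode; every ingredient is a $\otimes_{\Bbbk}$, a coproduct, a tensor algebra, or a quotient by an explicit set of relations, and each commutes with the exact monoidal functor $-\otimes_{\Bbbk}\overline{\Bbbk}$, so $H_{\cat{f}}(C)_{\overline{\Bbbk}}\cong H_{\cat{f}}(C_{\overline{\Bbbk}})$. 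Since finite generation of an algebra ascends and, by faithfully flat descent, descends along $\Bbbk\le\overline{\Bbbk}$, I may assume $\Bbbk=\overline{\Bbbk}$ and must show that $H_{\cat{f}}(C)$ is affine exactly when $C$ is pointed.

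For the pointed case I would show $\sum_k S^k(C)$ is finite-dimensional. Let $g_{1},\dots,g_{m}$ be the grouplikes spanning the coradical $C_{0}$; inside $H_{\cat{f}}(C)$ each is invertible with $S(g_{i})=g_{i}^{-1}$ and $S^{2}(g_{i})=g_{i}$. Using the Taft--Wilson decomposition of the pointed coalgebra $C$ I can choose, in each coradical layer, a basis of elements $x$ with $\Delta(x)=g\otimes x+x\otimes h+(\text{terms of strictly lower layer})$ for grouplikes $g,h$. For such $x$ the antipode recursion $\sum S(x_{(1)})x_{(2)}=\varepsilon(x)1$ isolates a clean leading term $S(x)\,h$, whence $S(x)=\big(\varepsilon(x)1-g^{-1}x-\sum S(x_{(1)}')x_{(2)}'\big)h^{-1}$ with all $x_{(1)}'$ in a strictly lower layer. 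Induction on the coradical filtration then gives $S(C)\subseteq B:=\Bbbk\langle C,g_{1}^{-1},\dots,g_{m}^{-1}\rangle$; since also $S(g_i^{-1})=g_i$, the finitely generated subalgebra $B$ is stable under $S$ and contains $C$, hence equals $H_{\cat{f}}(C)$, which is therefore affine. (This recovers the non-commutative Laurent algebra of \Cref{ex:laurent}.)

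For the converse I would first reduce to a single matrix coalgebra. If $C$ is not pointed then the finite-dimensional algebra $C^{*}$ has a simple quotient $C^{*}\twoheadrightarrow M_{n}(\Bbbk)$ with $n\ge 2$; by Wedderburn--Malcev this factor lifts to a subalgebra $M_{n}(\Bbbk)\hookrightarrow C^{*}$, and dualizing yields a coalgebra surjection $C\twoheadrightarrow M_{n}^{*}$. Functoriality of $H_{\cat{f}}$, together with the fact that the image of a Hopf morphism is an antipode-stable subalgebra, gives a surjection $H_{\cat{f}}(C)\twoheadrightarrow H_{\cat{f}}(M_{n}^{*})$; as quotients of affine algebras are affine, it suffices to prove that $H_{\cat{f}}(M_{n}^{*})$ is not affine for $n\ge 2$. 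I would establish this Tannakian-style: the category of finite-dimensional $H_{\cat{f}}(M_{n}^{*})$-comodules is the free rigid monoidal category on a single object $X$ with $\dim X=n$, in which the iterated duals $X=X^{(0)},X^{(1)},X^{(2)},\dots$ (corresponding to $u,S(u),S^{2}(u),\dots$) are pairwise non-isomorphic. Affineness would mean that for some $N$ every comodule is a subquotient of sums of tensor powers of $W:=\bigoplus_{j\le N}X^{(j)}$, with no further duals allowed; but the cup/cap combinatorics of the free rigid category show that for $n\ge 2$ the next dual $X^{(N+1)}$ receives no nonzero morphism from, and is not a subquotient of, any tensor power of $W$, so $S^{N+1}(M_{n}^{*})\not\subseteq A_{N}$ and the chain never stabilizes. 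Equivalently and more concretely, Takeuchi's point description \cite{zbMATH03344702} realizes an algebra map $H_{\cat{f}}(M_{n}^{*})\to A$ as a sequence $(g_{k})\subset GL_{n}(A)$ with $g_{k+1}$ the inverse of $g_{k}$ computed alternately in $M_{n}(A)$ and in $M_{n}(A^{op})$; over a noncommutative $A$ this alternation is an inverse-transpose rather than the identity on $S^{2}$, so a suitable point has matrices generating a strictly increasing chain of subalgebras, witnessing $S^{N+1}(u)\notin A_{N}$.

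The main obstacle is precisely this last step: proving rigorously that the iterated antipodes genuinely escape every finite stage when $n\ge 2$, whereas they collapse (via $S^{2}=\id$ on grouplikes) in the pointed case. The crux is the observation that over a noncommutative coefficient algebra the ``transpose-inverse'' producing $S^{2}$ is \emph{not} involutive --- exactly what becomes trivial for $n=1$ (the grouplike/pointed regime) and exactly what forces non-affineness for $n\ge 2$ --- so the real work lies in turning either the free-rigid-category combinatorics or the explicit noncommutative-point computation into a clean proof that $A_{N}\subsetneq A_{N+1}$ for all $N$.
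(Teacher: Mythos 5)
Your base-change reduction to $\Bbbk=\overline{\Bbbk}$ and your treatment of the pointed direction are sound: the Taft--Wilson induction showing $S(C)\subseteq\Bbbk\langle C,g_1^{-1},\dots,g_m^{-1}\rangle$, so that this finitely generated subalgebra is antipode-stable and hence all of $H_{\cat{f}}(C)$, is a correct and genuinely different argument from the paper's (which instead reads affineness off Takeuchi's free-product decomposition of $H_{\cat{f}}(C)$). The converse, however, contains two gaps, one repairable and one that is the actual substance of the statement. The repairable one is your reduction to a single matrix coalgebra: Wedderburn--Malcev lifts $M_n(\Bbbk)$ into $C^*$ only as a \emph{non-unital} subalgebra (its unit is an idempotent $e\neq 1_{C^*}$), so the dual map $C\to M_n^*$ preserves comultiplication but not the counit and is not a coalgebra morphism. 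In fact $M_n^*$ need not be a quotient coalgebra of $C$ at all: for $C=\Bbbk g\oplus M_2^*$, a coalgebra surjection $C\twoheadrightarrow M_2^*$ would dualize to a unital algebra embedding $M_2\hookrightarrow\Bbbk\times M_2$, whose $\Bbbk$-component would be a unital algebra map $M_2\to\Bbbk$ --- impossible since $M_2$ is simple. The paper avoids this by invoking \cite[Theorem 32]{zbMATH03344702}, which gives an algebra isomorphism $H_{\cat{f}}(C)\cong T(V)\sqcup\coprod_i H_{\cat{f}}(M_{n_i}^*)$ (coproduct in $\cat{Alg}$); each free factor $H_{\cat{f}}(M_{n_i}^*)$ is then an algebra retract, hence a quotient, of $H_{\cat{f}}(C)$ (retract the other factors through their augmentations), and affineness passes to it.

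The second gap is fatal as written: the core claim that $H_{\cat{f}}(M_n^*)$ is \emph{not} affine for $n\ge 2$ is exactly the step you acknowledge leaving open, and neither of your two sketches (the free-rigid-category combinatorics, or the noncommutative-point computation showing the alternating inverses escape every finite stage) is carried out; neither is routine, and making either precise amounts to re-deriving the known structure theory of $H_{\cat{f}}(M_n^*)$. This is precisely where the paper leans on Nichols' explicit basis theorem \cite[Theorem 5]{nic}, from which it is visible that $H_{\cat{f}}(M_n^*)$, while generated by the antipode iterates $S^k(M_n^*)$, $k\ge 0$, is not generated by any finite subset of them. Without this input (or a worked-out substitute), your proposal proves one implication and reduces the other to an unproved assertion.
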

\begin{proof}
  As a consequence \cite[\S 3.5, Corollary b]{pierce_assoc} of the {\it Wedderburn-Artin} theorem \cite[\S 3.5, Theorem]{pierce_assoc}, the coradical $(C_{\overline{\Bbbk}})_0$, being finite-dimensional and cosemisimple over an algebraically closed field, must be a finite sum
  \begin{equation*}
    \left(C_{\overline{\Bbbk}}\right)_0
    \cong
    \bigoplus_{i=1}^{\ell}
    M_{n_i}(\overline{\Bbbk})^*
  \end{equation*}
  of matrix coalgebras; the analogous isomorphism then holds for sufficiently large (in the sense of inclusion) {\it finite} $\Bbbk''\ge \Bbbk$. Affineness being preserved and reflected by finite field extensions we can, upon substituting for $\Bbbk'$ a {\it composite field} \cite[\S 11.7]{cohn_alg} of $\Bbbk',\Bbbk''\ge \Bbbk$ (thus finite over $\Bbbk'$), assume that $C_0\cong \bigoplus M_{n_i}^*$ to begin with. Under that assumption, the claim is that $H_{\cat{f}}(C)$ is affine precisely when all $n_i$ are 1.

  Choosing a (plain, vector-space) decomposition $C\cong C_0\oplus V$, \cite[Theorem 32]{zbMATH03344702} gives the algebra structure
  \begin{equation*}
    H_{\cat{f}}(C)
    \cong
    T(V)\sqcup \coprod_{i}H_{\cat{f}}(M_{n_i}^*)
    \quad
    (\text{coproduct in }\cat{Alg}). 
  \end{equation*}
  Affineness for $H_{\cat{f}}(C)$ as a whole thus reduces to affineness for each individual $H_{\cat{f}}(M_{n_i}^*)$, and hence the desired equivalence again reduces:
  \begin{equation*}
    H:=H_{\cat{f}}(M_n^*)\text{ is affine}
    \xLeftrightarrow{\quad\text{claim}\quad}
    n=1.
  \end{equation*}
  One implication ($\Leftarrow$) is plain enough: $H_{\cat{f}}(\Bbbk)\cong \Bbbk[x^{\pm 1}]^{\circ}$ \cite[Lemma 34]{zbMATH03344702}, certainly affine. For the converse, the basis for $H$ described in \cite[Theorem 5]{nic} makes it clear that while generated as an algebra by the iterates
  \begin{equation*}
    (S^\bullet M_n^*)_{\bullet\in \bZ_{\ge 0}}
    ,\quad
    S:=\text{antipode of }H,
  \end{equation*}
  $H$ is {\it not} generated as an algebra by only finitely many of those iterates. 
\end{proof}

\begin{remark}\label{re:cpointedbijant}
  The coalgebras featuring in \Cref{pr:whenhfaff}, with pointed extensions to the algebraic closure, are also precisely those whose free Hopf algebra happens to have bijective antipode \cite[Theorem 18]{zbMATH03344702}. 
\end{remark}

\addcontentsline{toc}{section}{References}

\begin{thebibliography}{10}

\bibitem{ahs}
Ji{\v{r}}{\'{\i}} Ad{\'a}mek, Horst Herrlich, and George~E. Strecker.
\newblock Abstract and concrete categories: the joy of cats.
\newblock {\em Repr. Theory Appl. Categ.}, 2006(17):1--507, 2006.

\bibitem{ar}
Ji\v{r}\'{\i} Ad\'{a}mek and Ji\v{r}\'{\i} Rosick\'{y}.
\newblock {\em Locally presentable and accessible categories}, volume 189 of
  {\em London Mathematical Society Lecture Note Series}.
\newblock Cambridge University Press, Cambridge, 1994.

\bibitem{agore_mono}
A.~L. Agore.
\newblock Monomorphisms of coalgebras.
\newblock {\em Colloq. Math.}, 120(1):149--155, 2010.

\bibitem{agore_hopf}
A.~L. Agore.
\newblock Categorical constructions for {Hopf} algebras.
\newblock {\em Commun. Algebra}, 39(4):1476--1481, 2011.

\bibitem{am_comm}
Michael~F. Atiyah and I.~G. Macdonald.
\newblock Introduction to commutative algebra.
\newblock Reading, {Mass}.-{Menlo} {Park}, {Calif}.-{London}-{Don} {Mills},
  {Ont}.: {Addison}-{Wesley} {Publishing} {Company} (1969)., 1969.

\bibitem{zbMATH03510490}
George~M. Bergman.
\newblock The diamond lemma for ring theory.
\newblock {\em Adv. Math.}, 29:178--218, 1977.

\bibitem{brcx_hndbk-2}
Francis Borceux.
\newblock {\em Handbook of categorical algebra. 2: {Categories} and
  structures}, volume~51 of {\em Encycl. Math. Appl.}
\newblock Cambridge: Univ. Press, 1994.

\bibitem{brcx_hndbk-1}
Francis Borceux.
\newblock {\em Handbook of categorical algebra. {Volume} 1: {Basic} category
  theory}, volume~50 of {\em Encycl. Math. Appl.}
\newblock Cambridge: Cambridge Univ. Press, 1994.

\bibitem{brl}
Armand Borel.
\newblock {\em Linear algebraic groups}, volume 126 of {\em Graduate Texts in
  Mathematics}.
\newblock Springer-Verlag, New York, second edition, 1991.

\bibitem{cp_qg}
Vyjayanthi Chari and Andrew Pressley.
\newblock {\em A guide to quantum groups}.
\newblock Cambridge: Cambridge University Press, 1995.

\bibitem{zbMATH05696924}
Alexandru Chirv{\u{a}}situ.
\newblock On epimorphisms and monomorphisms of {Hopf} algebras.
\newblock {\em J. Algebra}, 323(5):1593--1606, 2010.

\bibitem{MR4712418}
Alexandru Chirvasitu.
\newblock Epimorphic quantum subgroups and coalgebra codominions.
\newblock {\em Algebr. Represent. Theory}, 27(1):219--244, 2024.

\bibitem{cohn_alg}
P.~M. Cohn.
\newblock {\em Basic algebra. {Groups}, rings and fields}.
\newblock London: Springer, 2003.

\bibitem{dnr}
Sorin D\u{a}sc\u{a}lescu, Constantin N\u{a}st\u{a}sescu, and \c{S}erban Raianu.
\newblock {\em Hopf algebras}, volume 235 of {\em Monographs and Textbooks in
  Pure and Applied Mathematics}.
\newblock Marcel Dekker, Inc., New York, 2001.
\newblock An introduction.

\bibitem{epsw_rec}
Graham Everest, Alf van~der Poorten, Igor Shparlinski, and Thomas Ward.
\newblock {\em Recurrence sequences}, volume 104 of {\em Math. Surv. Monogr.}
\newblock Providence, RI: American Mathematical Society (AMS), 2003.

\bibitem{fiore_2cat}
Thomas~M. Fiore.
\newblock {\em Pseudo limits, biadjoints, and pseudo algebras: categorical
  foundations of conformal field theory}, volume 860 of {\em Mem. Am. Math.
  Soc.}
\newblock Providence, RI: American Mathematical Society (AMS), 2006.

\bibitem{zbMATH07854093}
Paul Gro{\ss}kopf and Joost Vercruysse.
\newblock Free and co-free constructions for {Hopf} categories.
\newblock {\em J. Pure Appl. Algebra}, 228(10):32, 2024.
\newblock Id/No 107704.

\bibitem{jcbsn_alg-1}
Nathan Jacobson.
\newblock Basic algebra {I}. 2nd ed.
\newblock New {York}: {W}. {H}. {Freeman} and {Company}. {XVIII}, 499 p.
  {{\textsterling}} 19.95 (1985)., 1985.

\bibitem{jcbsn_alg-2}
Nathan Jacobson.
\newblock {\em Basic algebra {II}.}
\newblock New York, NY: W. H. Freeman {and} Company, 2nd ed. edition, 1989.

\bibitem{jy_2dcat}
Niles Johnson and Donald Yau.
\newblock {\em 2-dimensional categories}.
\newblock Oxford: Oxford University Press, 2021.

\bibitem{zbMATH05659661}
Stephen Lack.
\newblock A 2-categories companion.
\newblock In {\em Towards higher categories}, pages 105--191. Berlin: Springer,
  2010.

\bibitem{lam_1st_2e_2001}
T.~Y. Lam.
\newblock {\em A first course in noncommutative rings.}, volume 131 of {\em
  Grad. Texts Math.}
\newblock New York, NY: Springer, 2nd ed. edition, 2001.

\bibitem{zbMATH04204425}
Richard~G. Larson and Earl~J. Taft.
\newblock The algebraic structure of linearly recursive sequences under
  {Hadamard} product.
\newblock {\em Isr. J. Math.}, 72(1-2):118--132, 1990.

\bibitem{lor_fdhprod}
Martin Lorenz.
\newblock personal communication, 2021-01-08.

\bibitem{mcl_2e}
Saunders Mac~Lane.
\newblock {\em Categories for the working mathematician}, volume~5 of {\em
  Graduate Texts in Mathematics}.
\newblock Springer-Verlag, New York, second edition, 1998.

\bibitem{zbMATH01724903}
Paddy McCrudden.
\newblock Tannaka duality for {Maschkean} categories.
\newblock {\em J. Pure Appl. Algebra}, 168(2-3):265--307, 2002.

\bibitem{mont}
S.~Montgomery.
\newblock {\em Hopf algebras and their actions on rings}, volume~82 of {\em
  CBMS Regional Conference Series in Mathematics}.
\newblock Published for the Conference Board of the Mathematical Sciences,
  Washington, DC, 1993.

\bibitem{fkm}
D.~Mumford, J.~Fogarty, and F.~Kirwan.
\newblock {\em Geometric invariant theory}, volume~34 of {\em Ergebnisse der
  Mathematik und ihrer Grenzgebiete (2) [Results in Mathematics and Related
  Areas (2)]}.
\newblock Springer-Verlag, Berlin, third edition, 1994.

\bibitem{nic}
Warren~D. Nichols.
\newblock Quotients of {H}opf algebras.
\newblock {\em Comm. Algebra}, 6(17):1789--1800, 1978.

\bibitem{pierce_assoc}
Richard~S. Pierce.
\newblock {\em Associative algebras}, volume~88 of {\em Grad. Texts Math.}
\newblock Springer, Cham, 1982.

\bibitem{zbMATH05312006}
Hans-E. Porst.
\newblock On categories of monoids, comonoids, and bimonoids.
\newblock {\em Quaest. Math.}, 31(2):127--139, 2008.

\bibitem{zbMATH06696042}
Hans-E. Porst.
\newblock The formal theory of {Hopf} algebras. {Part} {I}: {Hopf} monoids in a
  monoidal category.
\newblock {\em Quaest. Math.}, 38(5):631--682, 2015.

\bibitem{zbMATH06696043}
Hans-E. Porst.
\newblock The formal theory of {Hopf} algebras. {Part} {II}: {The} case of
  {Hopf} algebras.
\newblock {\em Quaest. Math.}, 38(5):683--708, 2015.

\bibitem{rad}
David~E. Radford.
\newblock {\em Hopf algebras}, volume~49 of {\em Series on Knots and
  Everything}.
\newblock World Scientific Publishing Co. Pte. Ltd., Hackensack, NJ, 2012.

\bibitem{zbMATH03199301}
Maxwell Rosenlicht.
\newblock A remark on quotient spaces.
\newblock {\em Anais Acad. Brasil. Ci.}, 35:487--489, 1963.

\bibitem{rot-gp}
Joseph~J. Rotman.
\newblock {\em An introduction to the theory of groups}, volume 148 of {\em
  Graduate Texts in Mathematics}.
\newblock Springer-Verlag, New York, fourth edition, 1995.

\bibitem{schau_tann}
Peter Schauenburg.
\newblock {\em Tannaka duality for arbitrary {Hopf} algebras}, volume~66 of
  {\em Algebra-Ber.}
\newblock M{\"u}nchen: R. Fischer, 1992.

\bibitem{swe}
Moss~E. Sweedler.
\newblock {\em Hopf algebras}.
\newblock Mathematics Lecture Note Series. W. A. Benjamin, Inc., New York,
  1969.

\bibitem{zbMATH03344702}
Mitsuhiro Takeuchi.
\newblock Free {Hopf} algebras generated by coalgebras.
\newblock {\em J. Math. Soc. Japan}, 23:561--582, 1971.

\end{thebibliography}

\Addresses

\end{document}